\newcommand {\nc} {\newcommand}
\newcommand {\enm} {\ensuremath}
\def \d{\delta}
\nc {\bdm} {\begin{displaymath}}
\nc {\edm} {\end{displaymath}}
\newtheorem {theorem} {\bf{Theorem}}[section]
\newtheorem {lemma}[theorem] {\bf Lemma}
\newtheorem {proposition}[theorem] {\bf Proposition}
\newtheorem {corollary}[theorem] {\bf Corollary}
\numberwithin {equation}{section}
     \newcommand\fp{\mathfrak{p}}
\newcommand{\Ou}{\enm{\mathcal{O}}}
\nc{\J}{\enm{\mathcal{J} }}
\nc {\Z} {\enm{\mathbb{Z}}}
\nc {\form}[1] {\enm{\mbox{\underline{for}}}_{#1}}
\nc {\prol}[1] {\enm{\mbox{\underline{prol}}_{{#1}^*}}}
\nc {\stk} {\stackrel}
\newcommand{\map}{\rightarrow}
\newcommand{\inj}{\hookrightarrow}
\newcommand{\Pn}[2] {\ensuremath{ {\mathbb{P}}^{#1}_{#2}}}
\nc{\Quot}[3]{\enm{ {\mathfrak{Quot}_{ {#1}/{#2}/{#3}}}}}
\nc{\Hilb}[2]{\enm{ {\mathfrak{Hilb}_{ {#1}/{#2}}}}}
\newcommand{\mfrak}[1]{\mathfrak{#1}}
\newcommand{\bb}[1]{\mathbb{#1}}
\newcommand{\mcal}[1]{\mathcal{#1}}
\nc {\Coh}[4] {\ensuremath{H^{#1}(\Pn{#2}{},{#3}({#4}))}}
\nc {\Ch}[3] {\enm{H^{#1}(X_t,{#2}_t({#3}))}}
\nc {\Qphi}[4]{\enm{ {\mathfrak{Quot}^{~#4}_{ {#1}/{#2}/{#3}}}}}
\nc {\Gra}[4]{\enm{ {\mathfrak{Grass}_{#2}({#3},{#4})}}}
\nc {\HomA}[2]{\enm{\mathrm{Hom}_A{#1}{#2}}}
\nc {\tr}{\mathrm{tr}}
\newcommand{\eg}{e}
\nc {\C}[2]{\enm{\left(\begin{array}{l} {#1} \\ {#2} \end{array} \right)}}
\nc {\mat}[4]{\enm{\left(\begin{array}{ll}{#1} & {#2} \\ {#3} & {#4}
\end{array}\right)}}
\def \vp{\varphi}
\def \mb{\mbox}
 \def \Z{{\mathbb Z}}
   \def \h{\hat{\ }}
\def \d{\delta} \def \bZ{{\mathbb Z}}  \def \bF{{\bf F}}
  \def \bX{{\bf X}} \def \bH{{\bf H}}
 \def \bF{{\mathbb F}}
\def \hG{\hat{\mathbb{G}}_{\mathrm{a}}} 
\def \Ga{\mathbb{G}_{\mathrm{a}}}
\def \hA{\hat{A}}
\def \R1{R((q))[q']\h}
\newcommand{\xg}{h}
\newcommand{\lam}{\lambda}
\DeclareMathOperator{\Spec}{\mathrm{Spec}}
\DeclareMathOperator{\Spf}{\mathrm{Spf}}
\DeclareMathOperator{\Lie}{\mathrm{Lie}}
\newcommand{\Hom}{\mathrm{Hom}}
\newcommand{\End}{\mathrm{End}}
\newcommand{\Ext}{\mathrm{Ext}}
\newcommand{\forl}{\mathrm{for}}
\newcommand{\bI}{{\bf I}}
\newcommand{\switt}{s_{\mathrm{Witt}}}
\newcommand{\teich}{v}
\newcommand{\sO}{\mathcal{O}}
\newcommand{\longlabelmap}[1]{{\,\buildrel #1\over\longrightarrow\,}}
\newcommand{\longmap}{{\,\longrightarrow\,}}
\def\longisomap{{\,\buildrel \sim\over\longrightarrow\,}} 
\def\isomap{{\,\buildrel \sim\over\rightarrow\,}} 
\newcommand{\ret}{\rho}
\newcommand{\oldmarginpar}[1]{}
\newcommand{\xqa}{{q}}
\newcommand{\xqb}{{\hat{q}}}
\newcommand{\rdeg}{f}
\title[Differential characters of Drinfeld Modules]{Differential characters of Drinfeld Modules and de~Rham cohomology}
\author{James Borger \and Arnab Saha}
\date{}
\email{james.borger@anu.edu.au, arnabsaha0930@gmail.com}
\address{Australian National University, Max Planck Institute for Mathematics}
\begin{document}
\maketitle

\begin{abstract}
We introduce differential characters of Drinfeld modules. These are function-field analogues of Buium's
$p$-adic differential characters of elliptic curves and of Manin's differential characters of
elliptic curves in differential algebra, both of which have had notable Diophantine applications. We
determine the structure of the group of differential characters. This shows the existence of a family of
interesting differential modular functions on the moduli of Drinfeld modules. It also leads to a canonical
$F$-crystal equipped with a map to the de Rham cohomology of the Drinfeld module.
This $F$-crystal is of a differential-algebraic nature, and the relation to the classical cohomological
realizations is presently not clear.
\end{abstract}

\section{Introduction}

The theory of arithmetic jet spaces developed by Buium draws 
inspiration from the theory of differential algebra over a function field.
In differential algebra, given a scheme
$E$ defined over a function field $K$ with a derivation $\partial$ on it, one can define the jet spaces $J^nE$ for all $n \in
\bb{N}$ with respect to $(K,\partial)$ and they form an inverse system of 
schemes satisfying a universal property with respect to derivations lifting 
$\partial$. The ring of global functions
$\Ou(J^nE)$ can be thought of as the ring of $n$-th order differential functions
on $E$. In the case when $E$ is an elliptic curve and its structure sheaf
$\Ou_E$ does not have a derivation lifting $\partial$ (if it does, then it is
the isotrivial case and $E$ will descend to the subfield $K^{\partial=0}$ of constants), there exists a 
differential function $\Theta \in \Ou(J^2E)$ which is a homomorphism of group schemes from $J^2E$ to
the additive group $\bb{G}_a$.  Such a $\Theta$ is an example of a differential
character of order $2$ for $E$ and is known as a Manin character.
Explicitly, if $E$ is given by the 
Legendre equation $y^2=x(x-1)(x-t)$ over $K=\bb{C}(t)$ with derivation 
$\partial = \frac{d}{dt}$,  then
	$$ 
	\Theta(x,y,x',y',x'',y'')= \frac{y}{2(x-t)^2} - \frac{d}{dt}
	\left[2t(t-1)\frac{x'}{y} \right] + 2t(t-1)x'\frac{y'}{y^2}. 
	$$
The existence of such a $\Theta$ is a consequence of the Picard--Fuchs equation.
Using the derivation $\partial$ on $K$, we can lift any $K$-rational point $P \in E(K)$ 
canonically to $J^2E(K)$,
and this defines a homomorphism $\nabla:E(K) \map J^2E(K)$. We emphasize that
$\nabla$ is merely a map on $K$-rational points and does not come from a map of schemes. The 
composition $\Theta \circ \nabla: E(K) \map \bb{G}_a(K)$ is then a group homomorphism
of $K$-points. 
Note that the torsion points of $E(K)$ are contained in the kernel of $\Theta$
since $\bb{G}_a(K)$ is torsion free. Such a $\Theta$ was used by Manin
to give a proof of the Lang--Mordell conjecture for abelian varieties over
function fields \cite{M}. Later Buium gave a different proof, using other
methods, but still using the Manin map \cite{Bui1}.

The theory of arithmetic jet spaces, as developed by Buium, proceeds similarly. Derivations $\partial$ are
replaced by what are known as $\pi$-derivations $\d$. They naturally arise from the theory of $\pi$-typical
Witt vectors. For instance, when our base ring $R$ is an unramified extension of the ring of $p$-adic integers
$\Z_p$, for a fixed prime $\pi= p$, the Fermat quotient operator $\d x = \frac{\phi(x)-x^p}{p}$ is the unique
$p$-derivation, where the endomorphism $\phi\colon R\to R$ is the lift of the $p$-th power Frobenius
endomorphism of $R/pR$. In analogy with differential algebra, one can define the $n$-th order jet space $J^nE$
of an elliptic curve $E$ over $R$ to be the ($\pi$-adic) formal scheme over $R$ with functor of points
$$(J^nE)(C) = \Hom_R(\Spec W_n(C),E), $$ where $W_n(C)$ is the ring of $\pi$-typical Witt vectors of length
$n+1$, which we view as the arithmetic analogue of $C[t]/(t^{n+1})$. The jet space $J^nE$ is also known as the
Greenberg transform. As with the differential jet space, it has relative dimension $n+1$ over the base, in this
case $\Spf R$.

Then one can define $\bX_n(E)$ to be the $R$-module of all group-scheme homomorphisms from $J^nE$ to the
$\pi$-adic formal scheme $\hG$. Let $\bX_\infty(E)$ be the direct limit of the $\bX_n(E)$. Now the usual
Frobenius operator on Witt vectors induces a canonical Frobenius morphism $\phi:J^{n+1}E \map J^nE$ lying over
the endomorphism $\phi$ of $\Spf R$. Hence pulling back morphisms via $\phi$ as $\Theta \mapsto \phi^*\Theta$,
endows $\bX_\infty(E)$ with an action of $\phi^*$ and hence makes $\bX_\infty(E)$ into a left module over the
twisted polynomial ring $R\{\phi^*\}$ with commutation law $\phi^*\cdot r=\phi(r)\cdot \phi^*$. In \cite{Bui2},
Buium studied the structure of $\bX_\infty(E)$. Putting
$K=R[\frac{1}{p}]$, he showed that $\bX_\infty(E) \otimes_R K$ is freely generated by a single element as a
$K\{\phi^*\}$-module. This element is of order $2$ unless $E$ has a Frobenius lift (in particular
is a canonical lift of an ordinary curve), in which case it is of order
$1$. It is the arithmetic analogue of the Manin character.

In this paper, we study the function-field analogue of Buium's theory. We emphasize that we take the
function-field analogue in every possible sense. So instead of looking at characters $J^nE\to\hG$ of
$\bZ$-module schemes over $\bZ_p$, where the $\bZ$-module scheme $E$ is an elliptic curve over $\bZ_p$ and
$J^nE$ is its $p$-typical arithmetic jet space defined above, we will look at, for example, characters
$J^nE\to\hG$ of ($t$-adically formal) $\bF_q[t]$-module schemes over $\bF_q[[t]]$, where $E$ is a Drinfeld
$\bF_q[t]$-module, $\hG$ is the additive group with the tautological $\bF_q[t]$-module structure, and $J^nE$ is
its function-field arithmetic jet space---in other words, the Greenberg transform but with ``$t$-typical'' Witt
vectors. The most important result in this paper is the construction of a canonical $F$-crystal $\bH(E)$ which
comes with a Hodge-type filtration and a morphism $\bH(E) \map \bH_{\mathrm{dR}}(E)$ to the usual de Rham
cohomology preserving the filtration. As a consequence of the methods that go into the construction of
$\bH(E)$, we also prove that $\bX_\infty(E)$ is freely generated by a single element as an $R\{\phi^*\}$-module,
which is a stronger, integral version of the equal-characteristic analogue of Buium's result. Here, we would
like to emphasize that all the fundamental principles that go into our approach also work for $p$-adic elliptic
curves.

%



\vspace{3mm}

Before we describe our main results in detail, we wish to fix a few notations. Let $\bF_q$ be the finite field
with $q$ elements and $A$ is the coordinate ring of $X \backslash\{\infty\}$, where $X$ is a projective,
geometrically connected, smooth curve over $\bF_q$ and $\infty$ a $\bF_q$-point on it. Let $\mfrak{p}$ be a
fixed maximal ideal of $A$, and let $\pi$ be an element of $\mfrak{p}\setminus\mfrak{p}^2$. Let $R$ be an
$A$-algebra which is a complete discrete valuation ring with maximal ideal $\pi R$ and which has a lift
$\phi:R\to R$ of the $\xqb$-power Frobenius from $R/\pi R$, where $\xqb=|A/\mfrak{p}|$. Then one can consider
the operator on $R$ given by $\d x = \frac{\phi(x) - x^{\xqb}}{\pi}$. It is called the \emph{$\pi$-derivation}
associated to $\phi$.

Then as in the mixed-characteristic case above, one can define the $t$-typical Witt vectors and hence the
$t$-typical arithmetic jet space functor. For any (formal) $A$-module scheme $E$ over $R$, the jet space also
$J^nE$ has a natural (formal) $A$-module-scheme structure. However, we would like to remark here that for all
$n \geq 1$, the $J^nE$ are not abelian Anderson $A$-modules (as defined in~\cite{Hartl}, 1.2). Then we 
let $\bX_n(E)$ denote the set of $A$-linear
differential characters of order $n$, that is, the set of homomorphisms $J^nE\to \hG$ of (formal) $A$-module
schemes over $R$. Finally, we form their direct limit $\bX_\infty(E)$, which is naturally an
$R\{\phi^*\}$-module, as above.

We say $E$ splits at $m$ if $\bX_m(E) \ne \{0\}$ but $\bX_i(E) = \{0\}$ for all $0\leq i \leq m-1$. Then we
show that $m$ satisfies $1 \leq m \leq r$, where $r$ is the rank of $E$, and that $\bX_m(E)$ is a free
$R$-module with a canonical basis element $\Theta_m \in \bX_m(E)$, depending only on our chosen coordinate on
$E$. In the case when the rank $r$ is $2$, we have $m=2$ unless $E$ admits a lift of Frobenius compatible with
the $A$-module structure on $E$, in which case $m=1$. Then our first main theorem is a strengthened version of
the equal-characteristic analogue of Buium's result in \cite{Bui2}.

\begin {theorem}
\label{phigen-intro}
Let $E$ be a Drinfeld module that splits at $m$. Then the $R$-module $\bX_m(E)$ is free of 
rank $1$, and it freely generates $\bX_\infty(E)$ as an $R\{\phi^*\}$-module
in the sense that the canonical map $R\{\phi^*\}\otimes_R \bX_m(E) \map 
\bX_\infty(E)$ is an isomorphism.
\end{theorem}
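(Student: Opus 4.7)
My plan is to proceed by induction on the level $n$, using the canonical short exact sequence of formal $A$-module schemes
$$
0 \longrightarrow K_n \longrightarrow J^{n+1}E \longrightarrow J^nE \longrightarrow 0,
$$
whose rightmost map is the jet-space projection. Applying $\Hom_A(-,\hG)$ gives an embedding
$$
\bX_{n+1}(E)\big/\bX_n(E) \ \hookrightarrow\ \Hom_A(K_n,\hG),
$$
and the main preliminary step is to identify $K_n$ as a Frobenius twist of the Lie algebra of $E$ and to show that $\Hom_A(K_n,\hG)\cong R$. This upper-bounds each graded piece of the level filtration by $R$, so that the $R$-rank of $\bX_n(E)$ is at most $n-m+1$.

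The splitting hypothesis $\bX_{m-1}(E)=0$ combined with the embedding $\bX_m(E)\hookrightarrow R$ then forces $\bX_m(E)$ to be a non-zero $R$-submodule of $R$, hence a free $R$-module of rank $1$; the canonical $\Theta_m$ alluded to in the paragraph preceding the theorem statement furnishes a basis. I would then show by induction on $n\geq m$ that $(\phi^*)^{n-m+1}\Theta_m$ projects to a generator of $\bX_{n+1}(E)/\bX_n(E)\cong R$, thereby identifying $\bX_n(E)$ with the free $R$-module on $\Theta_m,\phi^*\Theta_m,\ldots,(\phi^*)^{n-m}\Theta_m$. Passing to the direct limit gives surjectivity of the canonical map $R\{\phi^*\}\otimes_R \bX_m(E)\to\bX_\infty(E)$, and injectivity follows by projecting any relation $\sum a_i(\phi^*)^i\Theta_m = 0$ to its top graded piece, forcing the leading coefficient to vanish.

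The main technical obstacle I anticipate is \emph{integral primitivity}: proving that at each level the Frobenius-twisted element $(\phi^*)^{n-m+1}\Theta_m$ maps to a unit in $\Hom_A(K_n,\hG)\cong R$, rather than to an element of $\pi R$. Over the fraction field $K=R[1/\pi]$ this reduces to a rank count essentially equivalent to Buium's original argument, but the integral statement---which is precisely what underlies the lattice structure on $\bX_\infty(E)$ and the construction of the $F$-crystal $\bH(E)$ promised in the introduction---requires tracking how $\phi^*$ interacts with the leading coefficient of $\Theta_m$ in the chosen coordinate on $E$. Once this primitivity is in hand, the remaining pieces (the identification of $K_n$, the computation of $\Hom_A(K_n,\hG)$, and the inductive assembly) are formal consequences of $t$-typical Witt-vector calculus adapted to the Drinfeld-module setting.
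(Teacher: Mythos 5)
Your skeleton---filter $\bX_\infty(E)$ by order, embed each graded piece $\bX_{n+1}(E)/\bX_n(E)$ into $\Hom_A(H^n,\hG)\cong R$ using the kernel of $J^{n+1}E\to J^nE$, and then try to generate everything by the $\phi^*$-twists of $\Theta_m$---is exactly the frame the paper uses (theorem~\ref{kernel}, proposition~\ref{rk}, proposition~\ref{pro-filt}). But you have not proved the step you yourself single out as the main obstacle, and that step is essentially the whole content of the theorem. Concretely: $\Theta_m$ is first available only as a \emph{rational} character, with $i^*\Theta_m=\Psi_m-\lam_{m-1}\Psi_{m-1}-\cdots-\lam_1\Psi_1$ and $\lam_i\in K$, and the crux is to show $\lam_i\in R$ (theorem~\ref{intlam}), equivalently that $\Theta_m$ is integral and maps to a unit in its graded piece, and then that each $(\phi^*)^j\Theta_m$ generates $\bX_{m+j}(E)/\bX_{m+j-1}(E)$ over $R$ and not just over $K$. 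Your plan says this ``requires tracking how $\phi^*$ interacts with the leading coefficient of $\Theta_m$ in the chosen coordinate,'' but coordinate bookkeeping of leading coefficients gives nothing here: the Buium-style rank count only controls $\bX_n(E)\otimes K$ and leaves open precisely the possibility that the image of $\bX_{m+j}(E)$ in the graded piece $\cong R$ is strictly larger than the submodule generated by the image of $(\phi^*)^j\Theta_m$, in which case $R\{\phi^*\}\otimes_R\bX_m(E)\map\bX_\infty(E)$ would fail to be surjective even though it is an isomorphism after inverting $\pi$.

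Closing this gap is where the paper introduces machinery your proposal never touches, and the introduction says explicitly that theorem~\ref{phigen-intro} is proved by way of theorem~\ref{fullcrys-intro}: the lateral Frobenius $\mfrak{f}:N^{n+1}\map N^n$ on the kernels $N^n=\ker(J^nE\to E)$, satisfying $\phi^{\circ 2}\circ i=\phi\circ i\circ\mfrak{f}$, which yields the canonical basis $\Psi_i=\vartheta_1\circ\mfrak{f}^{\circ(i-1)}$ of $\Hom_A(N^n,\hG)$ (proposition~\ref{linind}); the push-out map $\Psi\mapsto(E^*_\Psi,s_\Psi)$ to the de~Rham module $\Ext^\sharp(E,\hG)$, which produces a finitely generated $\mfrak{f}^*$-stable lattice inside $\bH(E)_K$ (or inside $\bI(E)\subseteq\Ext(E,\hG)$ in the case $\gamma=0$); and Dieudonn\'e--Manin slope theory (lemma~\ref{lem:char-poly}), which converts the existence of that stable lattice into integrality of the matrix entries $\lam_1,\dots,\lam_{m-1}$ and $\gamma$. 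Only after this (corollary~\ref{cor:square-iso}) do the maps $i^*$ and $\phi^*$ on graded pieces become isomorphisms, at which point the inductive assembly and the top-graded-piece injectivity argument you describe do go through. So your proposal correctly reduces the theorem to an integral primitivity statement, but it offers no viable mechanism for proving that statement; as written it establishes only the rational analogue (Buium's theorem), not the integral claim being made here.
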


Let us now proceed to our second result.
Let $u:J^nE \map E$ be the usual projection map and put $N^n =\ker{u}$. Since $u$ is $A$-linear, $N^n$ is a
formal $A$-module scheme of relative dimension $n$ over $\Spf{R}$. For each $n \geq 1$, we show in proposition
\ref{latfrob} that there is a lift of Frobenius $\mfrak{f}:N^{n+1} \map N^n$ making the system $\{N^n\}$ into a
prolongation sequence with respect the obvious projection map $u:N^{n+1} \map N^n$. We call $\mfrak{f}$ the
{\it lateral Frobenius}. However, $\mfrak{f}$ is not compatible with $i$ and $\phi:J^{n+1}E \map J^nE$ in the
obvious way, that is, it is not true that $\phi \circ i = i \circ \mfrak{f}$ holds. In fact, we can not expect
it to be true because that would induce an $A$-linear lift of Frobenius on $E$ which is not the case to start
with. Instead we have
	$$
	\phi^2 \circ i = \phi \circ i \circ \mfrak{f}.
	$$ 

In section~\ref{sec-deRham}, we construct a canonical $F$-crystal attached to $E$. The
$F$-crystal, denoted $\bH(E)$, is an $R$-module which has a semi-linear operator $\mfrak{f}^*$ (induced from
$\mfrak{f}$) on it and is of rank $m$, which we emphasize can be strictly smaller than $r$.
(By the term $F$-crystal, we mean only a free $R$-module of finite rank equipped with a semi-linear
operator $F$. We do not assume $F$ is injective, although on $\bH(E)$ this will be true
generically. The reader can refer to~\cite{Laumon-book-vol1}, section (2.4).) The module $\bH(E)$
also has a Hodge-type filtration and canonically maps to the de Rham cohomology of $E$, with its
Hodge filtration.

\begin{theorem}
\label{fullcrys-intro}
There is a canonical map between exact sequences
$$
\xymatrix{
0 \ar[r] & \bX_m(E) \ar[r] \ar[d]_-\Upsilon & 
\bH(E) \ar[r] \ar[d]^-{\Phi} & \bI(E) \ar[r] \ar@{^{(}->}[d]& 0\\
0 \ar[r] &\Lie(E)^* \ar[r] & \bH_{\mathrm{dR}} (E) \ar[r] & \Ext(E,\hG)
 \ar[r] & 0
}
$$
Moreover, the operator $\mfrak{f}^*$ on $\bH(E)$ descends to its image under
$\Phi$.
\end{theorem}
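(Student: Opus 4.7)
The strategy is to unpack the construction of $\bH(E)$ from Section~\ref{sec-deRham} and to build each vertical map concretely, then check exactness and $\mfrak{f}^*$-compatibility.

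First, I would construct $\Upsilon$. A differential character $\Theta \in \bX_m(E)$ is an $A$-linear morphism $J^mE \to \hG$, and its derivative at the identity, restricted to the subspace $\Lie(E) \hookrightarrow \Lie(J^mE)$ coming from the zero section of the projection $u: J^mE \to E$, yields an $A$-linear functional $\Upsilon(\Theta) \in \Lie(E)^*$. Injectivity on $\bX_m(E)$ would follow from Theorem~\ref{phigen-intro}: since $\bX_m(E)$ is $R$-free of rank~$1$ on the canonical generator $\Theta_m$, it suffices to check that $\Upsilon(\Theta_m) \neq 0$, which is true by construction of $\Theta_m$ (it is designed precisely to detect a nontrivial invariant differential).

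Next, for the middle map $\Phi$, I would exploit the interpretation of $\bH_{\mathrm{dR}}(E)$ as classifying extensions of $E$ by $\hG$ together with an infinitesimal rigidification. The construction of $\bH(E)$ presents each of its classes as a differential character packaged together with compatibility data along the lateral prolongation sequence $\{N^n\}$, and this data canonically assembles into a rigidified extension of $E$ by $\hG$, defining $\Phi$. Commutativity of the left square is a matter of unwinding definitions. The induced map $\bI(E) \to \Ext(E, \hG)$ is then the cokernel map, and its injectivity amounts to showing that if a class in $\bH(E)$ maps into the submodule $\Lie(E)^* \subset \bH_{\mathrm{dR}}(E)$, then it already lies in $\bX_m(E)$; I would verify this directly using the explicit form of the canonical differential characters and Theorem~\ref{phigen-intro}.

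The step I expect to be the main obstacle is the final assertion, namely that $\mfrak{f}^*$ descends to the image of $\Phi$; equivalently, that $\ker(\Phi)$ is $\mfrak{f}^*$-stable. The key tool is the identity $\phi^2 \circ i = \phi \circ i \circ \mfrak{f}$ stated in the introduction, which relates the lateral Frobenius to the ambient one on $J^nE$. Under $\Phi$, pullback by $\mfrak{f}$ should translate, modulo a correction vanishing on the image, into pullback by $\phi$ on the de Rham side, which is the usual Frobenius on $\bH_{\mathrm{dR}}(E)$. Careful bookkeeping of the orders of characters, together with the $R\{\phi^*\}$-freeness of $\bX_\infty(E)$ provided by Theorem~\ref{phigen-intro}, should then close out the argument.
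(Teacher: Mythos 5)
Your overall skeleton---push a character $\Psi\in\Hom_A(N^n,\hG)$ out along $0\to N^n\to J^nE\to E\to 0$ and rigidify using the Lie-level splitting coming from the Teichm\"uller/Witt coordinates---is indeed the paper's construction of $\Phi$, but the proposal misses the steps that carry the real weight of the theorem, and one of its explicit claims is false. The main problem is circularity: you invoke Theorem~\ref{phigen-intro} both to control $\bX_m(E)$ and to ``close out'' the descent argument, but in the paper the logical order is the reverse---Theorem~\ref{phigen-intro} is \emph{deduced} from the machinery of Theorem~\ref{fullcrys-intro}. What makes the top row exact integrally with $\bX_m(E)$ on the left (rather than only after tensoring with $K$) is the integrality of the coefficients $\lam_1,\dots,\lam_{m-1}$ of $i^*\Theta_m$ (Theorem~\ref{intlam}); this is proved by exhibiting an $\mfrak{f}^*$-stable lattice---the image of $\bH(E)$ in $\Ext^\sharp(E,\hG)$ under $\Phi$ when $\gamma\neq 0$, or $\bI(E)\subseteq\Ext(E,\hG)$ when $\gamma=0$---and applying the Dieudonn\'e--Manin argument of Lemma~\ref{lem:char-poly}. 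Your proposal offers no substitute for this, and without it neither the identification $\colim_n \bX_n(E)/\phi^*(\bX_{n-1}(E)^\phi)\simeq\bX_m(E)$ nor the stabilization of $\bH_n(E)$ and $\bI_n(E)$ for $n\geq m$ is available. Moreover, the claim that $\Upsilon(\Theta_m)\neq 0$ ``by construction'' is wrong: by Propositions~\ref{pro:formulas} and~\ref{diff}, $\Upsilon(\Theta_m)$ equals $g'(0)=\gamma/\pi$ up to sign, and $\gamma$ can vanish---Proposition~\ref{sharp} characterizes injectivity of $\Phi$ exactly by $\gamma\neq 0$, and the rank-$2$ computations of section~\ref{sec-computation2} exhibit such degenerate cases. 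The theorem asserts injectivity only of the right-hand vertical arrow, not of $\Upsilon$ or $\Phi$.

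Two further gaps. First, you never verify that $\Phi$ is well defined on $\bH(E)$, i.e.\ that the rigidified push-out class $(E^*_\Psi,s_\Psi)$ vanishes for $\Psi\in i^*\phi^*(\bX_{n-1}(E)^\phi)$; this is Proposition~\ref{dual}, and it rests on the concrete computation that $\tilde g(x)=g(x^\xqb)$, hence $D\tilde g=0$, not on formal unwinding. Second, your plan for the descent of $\mfrak{f}^*$---translating it ``into pullback by $\phi$ on the de Rham side, which is the usual Frobenius on $\bH_{\mathrm{dR}}(E)$''---presupposes a Frobenius action on $\Ext^\sharp(E,\hG)$ compatible with $\Phi$, which is neither defined nor used in the paper (the introduction explicitly cautions that the relation of $\bH(E)$ to the classical cohomological realizations is unclear). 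The paper's route is a dichotomy governed by $\gamma$: if $\gamma\neq 0$ then $\Phi$ is injective by Proposition~\ref{sharp} and descent is automatic; if $\gamma=0$ then by Proposition~\ref{diff} one has $\mfrak{f}^*i^*=i^*\phi^*$, so $\mfrak{f}^*$ already acts on $\bI(E)\subseteq\Ext(E,\hG)$, onto which the image of $\Phi$ projects isomorphically in that case. Without these ingredients the proposal does not yield the stated theorem.
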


\noindent The definitions of the maps $\Upsilon$ and $\Phi$
are given in (\ref{diag-crys}), and the proof
is given in section~\ref{subsec-ourdeRham}.
There is a close connection between these two theorems---in fact, 
our proof of theorem~\ref{phigen-intro} goes
by way of theorem~\ref{fullcrys-intro}. 

Finally, we conclude the paper with some explicit computations of the
structure constants of the $F$-crystal $\bH(E)$, which are new differential modular forms.

\vspace{3mm}
To a Drinfeld module $E$, the crystalline theory also attaches an $F$-crystal $\bH_{\mathrm{crys}}(E)$. It
appears that our $\bH(E)$ has subtle connections with $\bH_{\mathrm{crys}}(E)$, but it also appears that any
such connection would be indirect. This is because $\bH(E)$, unlike $\bH_{\mathrm{crys}}(E)$, has a
fundamentally differential-algebraic nature in that it lies not over a point of the moduli space of Drinfeld
modules but over a point of the jet space of the moduli space. For instance, the computations in
section~\ref{sec-computation2} show the structure constants of $\bH(E)$ do involve the higher $\pi$-derivatives
of the structure constants of the Drinfeld module.
The phenomenon of $\pi$-differential invariants depending on higher $\pi$-derivatives of modular parameters in 
the mixed-characteristic setting can be found in \cite{bosa2},\cite{Bui2},\cite{BuSa1}, 
\cite{BuSa2}, \cite{BuSa3}, \cite{BuSa4}.

It would be interesting to understand the exact nature of
the relationship between $\bH(E)$ and the crystalline cohomology groups, as well as the \'etale cohomology
groups and the other constructions in $\pi$-adic Hodge theory. This is all the more true because, as we remarked
before, the techniques developed in this paper have analogues for $p$-adic elliptic curves~\cite{bosa2},
and as a
result, we do obtain an analogous construction of the $F$-crystal $\bH(E)$ for elliptic curves.

{\bf Acknowledgement.} We wish to thank the anonymous referee for carefully 
reading our 
article and the suggestions which led to deeper clarifications and brought 
more lucidity in our present version of the paper.

\section{Notation}

Let us fix some notation which will hold throughout the paper. Let $\xqa=p^h$ where $p$ is a prime and 
$h \geq 1$. Let $X$ be a projective, geometrically connected, smooth curve over $\bF_\xqa$. Fix an 
$\bF_\xqa$-rational point $\infty$ on $X$. Let $A$ denote the Dedekind domain $\Ou(X \setminus \{\infty\})$. 
Let $\mfrak{p}$ be a maximal ideal of $A$, and let $\hA$ denote the $\mfrak{p}$-adic completion of $A$. 
Let $t$ be an element of $\mfrak{p}\setminus\mfrak{p}^2$,
and let $\pi$ denote its image in $\hA$. Then $\pi$ generates the maximal ideal $\hat{\mfrak{p}}$ of $\hA$.
Let $k$ denote the residue field $A/\fp$, and
let $\xqb$ denote its cardinality.
So, for example, if $A=\bF_q[u]$ and $\fp=(t)$, where $t\in\bF_q[u]$ is an irreducible polynomial,
then $\xqb=q^{\deg(t)}$. Note that the quotient map $\hA\to k$
has a unique section. Thus $\hA$ is not just an $\bF_\xqa$-algebra but also canonically a $k$-algebra.

Now let $R$ be an $\hA$-algebra which is $\fp$-adically complete and flat, or equivalently $\pi$-torsion free.
Thus the composition 
	\begin{equation}
	\label{eq:theta3849}
	\theta: A \map \hA\map R
	\end{equation}
is injective (assuming $R\neq \{0\}$)
and hence one says that $\theta$ is of {\it generic
characteristic}. Let us also fix an $\hA$-algebra endomorphism $\phi:R\to R$ which lifts the $\xqb$-power
Frobenius modulo $\mfrak{p}R$:
	$$
	\phi(x)\equiv x^\xqb\bmod \mfrak{p}R.
	$$
Do note that the identity map on $\hA$ does indeed lift the $\xqb$-power Frobenius on $\hA/\hat{\mfrak{p}}$. 	

For our main results, $R$ will in the end be a discrete valuation ring, most importantly the completion
$\bar{\bF}_q[[\pi]]$ of the maximal unramified extension of $\hA$, where $\phi$ satisfies $\phi(c)=c^\xqb$ for
$c\in\bar{\bF}_q$ and $\phi(\pi)=\pi$. So the reader may assume this from the start. (Also note that not all
rings $R$ admit such a Frobenius lift; so the existence of $\phi$ does place a restriction on $R$.) But some
form of our results should hold in general, and with essentially the same proofs. This is of some interest, for
instance when $R$ is the coordinate ring of the orindary locus of the moduli space of Drinfeld modules of a
given rank. (For the representability of Drinfeld modular varieties, see Laumon's book~\cite{Laumon-book-vol1},
theorem 1.4.1.) With an eye to the future, we have not assumed that $R$ is a discrete valuation ring where it
is easily avoided, in sections \ref{sec-witt-vectors}--\ref{sec-lateral}.

Let $K$ denote $R[1/\pi]$, and for any $R$-module $M$ write $M_K=K\otimes_R M$.
Finally, let $S$ denote $\Spf R$.

\section{Function-field Witt Vectors}
\label{sec-witt-vectors}
Witt vectors over Dedekind domains with finite residue fields were introduced in \cite{bor11a}. We
will only work over $\hA$, which is the ring of integers of a local field of characteristic $p$, and here they
were introduced earlier in~\cite{drin76}. The basic results can be developed exactly as in any of the usual
developments of the $p$-typical Witt vectors. The only difference is that in all formulas any $p$ in a 
coefficient is replaced with a $\pi$ and any $p$ in an exponent is replaced with a $\xqb$. 

\subsection{Frobenius lifts and $\pi$-derivations}
Let $B$ be an $R$-algebra, and let $C$ be a $B$-algebra with structure map $u:B \map C$. 
In this paper, a ring homomorphism $\psi:B\map C$ will be called a 
{\it lift of Frobenius} (relative to $u$) if it satisfies the following:
\begin{enumerate}
\item The reduction mod $\pi$ of $\psi$ is the $\xqb$-power Frobenius relative to $u$, that is,
$\psi(x) \equiv u(x)^\xqb \bmod \pi C$.

\item The restriction of $\psi$ to $R$ coincides with the fixed $\phi$ on $R$, 
that is, the following diagram commutes
	$$\xymatrix{
	B \ar[r]^\psi & C \\
	R \ar[r]_\phi \ar[u] & R \ar[u] 
	}$$
\end{enumerate}
A \emph{$\pi$-derivation} $\d$ from $B$ to $C$ means a set-theoretic map
$\d:B \map C$ satisfying the following for all $x,y \in B$
	\begin{eqnarray*}
	\label{der}
	\d(x+y) &=& \d (x) + \d (y)  \\
	\d(xy) &=& u(x)^\xqb \d (y) +  \d (x) u(y)^\xqb + \pi \d (x) \d (y)
	\end{eqnarray*}
such that for all $r\in R$, we have
$$
\d(r) = \frac{\phi(r)-r^\xqb}{\pi}.
$$
When $C=B$ and $u$ is the identity map, we will call this simply a $\pi$-derivation on $B$.

It follows that the map $\phi: B \map C$ defined as 
	$$
	\phi(x) := u(x)^\xqb + \pi \d (x)
	$$ 
is a lift of Frobenius in the sense above. On the other hand, 
for any flat $R$-algebra $B$ with a lift of Frobenius $\phi$, one can define
the $\pi$-derivation $\d(x)= \frac{\phi(x)-x^\xqb}{\pi}$ for all $x \in B$.

Note that this definition depends on the choice of uniformizer $\pi$, but in a transparent way:
if $\pi'$ is another uniformizer, then $\d(x)\pi/\pi'$ is a $\pi'$-derivation. This correspondence
induces a bijection between $\pi$-derivations $B\to C$ and $\pi'$-derivations $B\to C$.

\subsection{Witt vectors}
We will present three different points of view on function-field Witt vectors, all parallel to the mixed
characteristic case. But there is perhaps one unfamiliar element below, which is that we will work relative to
our general base $R$, and it already has a lift of Frobenius. The consequence is that we need to pay attention
to certain twists of the scalars by Frobenius, which are invisible over the absolute base $R=\hA$. However this
unfamiliar element has nothing to do with the difference between mixed and equal characteristic and only with
the difference between the relative and the absolute setting.

Let $B$ be an $R$-algebra with structure map $u:R\to B$.

(1) The ring $W(B)$ of \emph{$\pi$-typical Witt vectors} can be defined as
the unique (up to unique isomorphism) $R$-algebra $W(B)$ with a $\pi$-derivation $\d$ on 
$W(B)$ and an $R$-algebra homomorphism $W(B) \map B$ such that, given any 
$R$-algebra $C$ with a $\pi$-derivation $\d$ on it and an $R$-algebra map 
$f:C \map B$, there exists a unique $R$-algebra homomorphism $g:C \map W(B)$  such that the diagram
	$$
	\xymatrix{
	W(B) \ar[d] & \\
	B & C \ar[l]_f \ar[ul]_g 
	}
	$$
commutes and $g \circ \d = \d \circ g$.
Thus $W$ is the right adjoint of the forgetful functor from $R$-algebras with $\pi$-derivation
to $R$-algebras. For details, see section 1 of~\cite{bor11a}.
This approach follows that of \cite{Jo} to the usual $p$-typical Witt vectors.

(2) If we restrict to flat $R$-algebras $B$, then we can ignore the concept of $\pi$-derivation
and define $W(B)$ simply by expressing the universal property above
in terms of Frobenius lifts, as follows.
Given a flat $R$-algebra $B$, the ring $W(B)$ is
the unique (up to unique isomorphism) flat $R$-algebra $W(B)$ with a lift of Frobenius (in the sense
above) $F:W(B) \map W(B)$ and an $R$-algbebra homomorphism $W(B) \map B$ such that
for any flat $R$-algebra $C$ with a lift of Frobenius $\phi$ on it and an $R$-algebra map $f:C \map B$, 
there exists a unique $R$-algebra homomorphism $g:C \map W(B)$ such that the diagram
	$$
	\xymatrix{
	W(B) \ar[d] & \\
	B & C \ar[l]_f \ar[ul]_g 
	}
	$$
commutes and $g \circ \phi = F \circ g$.

(3) Finally, returning to the case of general $R$-algebras $B$, one can also define Witt vectors in terms of 
the Witt polynomials.
For each $n \geq 0$ let us define $B^{\phi^n}$ to be the $R$-algebra
with structure map $R \stk{\phi^n} {\map} R \stk{u}{\map} B$ and define the \emph{ghost rings}
to be the product $R$-algebras
$\Pi^n_\phi B = B \times B^\phi \times \cdots  \times B^{\phi^n}$ and 
$\Pi_\phi^\infty B= B \times B^\phi \times \cdots$. 
Then for all $n \geq 1$ there exists a \emph{restriction}, or \emph{truncation},
map $T_w:\Pi_\phi^nB \map \Pi_\phi^{n-1}B$ given by $T_w(w_0,\cdots,w_n)= (w_0,\cdots,w_{n-1})$.
We also have the left shift \emph{Frobenius} operators $F_w:\Pi_\phi^n B \map \Pi_\phi^{n-1} B$ given by 
$F_w(w_0,\dots,w_n) = (w_1,\dots,w_n)$. Note that $T_w$ is an $R$-algebra morphism, but
$F_w$ lies over the Frobenius endomorphism $\phi$ of $R$.

Now as sets define
	\begin{equation}
	\label{eq-witt-coord}
	W_n(B)=B^{n+1},	
	\end{equation}
and define the set map $w:W_n(B) \map \Pi_\phi^n B$  by $w(x_0,\dots,x_n)= (w_0,\dots,w_n)$ where 
	\begin{align}
	\label{eq-witt-poly}
	w_i = x_0^{\xqb^i}+ \pi x_1^{\xqb^{i-1}}+ \cdots + \pi^i x_i
	\end{align}
are the \emph{Witt polynomials}.
The map $w$ is known as the {\it ghost} map. (Do note that under the traditional indexing, 
used in many sources going back to Witt~\cite{Witt:Vectors}, our $W_n$ would be 
denoted $W_{n+1}$.) We can then define the ring $W_n(B)$, the ring 
of truncated $\pi$-typical Witt vectors, by the following theorem as in the $p$-typical case~\cite{hessl05},
proposition 1.2:

\begin{theorem}
\label{wittdef}
For each $n \geq 0$, there exists a unique functorial $R$-algebra structure on $W_n(B)$ such that
$w$ becomes a natural transformation of functors of $R$-algebras.
\end{theorem}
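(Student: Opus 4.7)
The plan is to follow the classical template for constructing the ring structure on the Witt vectors, adapted to the $\pi$-typical function-field setting. The argument splits into uniqueness, construction of universal addition and multiplication polynomials, and the integrality of those polynomials, which is the key technical point.

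For uniqueness, I would first observe that when $B$ is $\pi$-torsion free, the ghost map $w\colon W_n(B)\to \Pi_\phi^n B$ is injective: the Witt polynomial $w_i$ is triangular in $(x_0,\dots,x_i)$ with leading term $\pi^i x_i$, so after inverting $\pi$ one recovers the $x_j$ uniquely from $(w_0,\dots,w_n)$. Hence for $\pi$-torsion free $B$ any functorial $R$-algebra structure on $B^{n+1}$ for which $w$ is a ring homomorphism is forced to be the one pulled back from the product structure on $\Pi_\phi^n B$. A general $B$ is a quotient of a $\pi$-torsion free polynomial $R$-algebra $P$, and naturality transports the structure from $W_n(P)$ to $W_n(B)$, yielding uniqueness in general.

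For existence, I would produce universal polynomials $S_0,\dots,S_n$, $P_0,\dots,P_n \in R[X_0,\dots,X_n,Y_0,\dots,Y_n]$ satisfying
\[
w_i(S_0,\dots,S_i)=w_i(X)+w_i(Y),\qquad w_i(P_0,\dots,P_i)=w_i(X)\cdot w_i(Y),
\]
together with analogous polynomials for the unit, negation, and the scalar action of $R$. A priori these polynomials exist only in $R[1/\pi][X,Y]$; the crucial claim is that their coefficients in fact lie in $R$. Granting this, one defines the ring operations on $W_n(B)=B^{n+1}$ by these universal formulas for any $R$-algebra $B$. Functoriality in $B$ is automatic, and each ring axiom holds in $W_n(B)$ because it is a polynomial identity over $R$ that holds universally in $W_n(\Lambda)$ for $\Lambda=R[X,Y,Z,\dots]$, where the axiom reduces after applying the injective ghost map to a trivial identity in the product ring $\Pi_\phi^n\Lambda$.

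The main obstacle is the integrality of the $S_i$ and $P_i$. I would prove it via the $\pi$-typical analogue of Dwork's lemma: for a $\pi$-torsion free $R$-algebra $B$ equipped with a Frobenius lift $\phi_B$ extending $\phi$, a tuple $(w_0,\dots,w_n)\in\Pi_\phi^n B$ lies in the image of the ghost map if and only if
\[
\phi_B(w_{i-1})\equiv w_i \pmod{\pi^i B}\qquad\text{for all }1\le i\le n.
\]
The universal algebra $\Lambda$ carries the obvious Frobenius lift sending each variable to its $\xqb$-th power and extending $\phi$ on $R$, and these congruences are straightforward to verify for the ghost components of the coordinates $(X_i)_i$ and $(Y_i)_i$ themselves. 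Additivity of Frobenius modulo $\pi$ then gives the congruences for the sum sequence $\bigl(w_i(X)+w_i(Y)\bigr)_i$, and for the product sequence one expands $w_i(X)w_i(Y)$ and compares with $\phi_\Lambda\bigl(w_{i-1}(X)w_{i-1}(Y)\bigr)$ using the congruences already known for the individual factors. This forces the $S_i$ and $P_i$ to have coefficients in $R$, completing the proof.
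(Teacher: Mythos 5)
Your proposal is correct and is exactly the standard argument the paper relies on: the paper gives no proof of this theorem, citing instead the usual $p$-typical development (with $p$ replaced by $\pi$ in coefficients and by $\xqb$ in exponents), and your route---uniqueness via injectivity of the ghost map for $\pi$-torsion free $B$ plus naturality from a polynomial cover, existence via universal polynomials made integral by the $\pi$-typical Dwork lemma---is that classical proof, which goes through in the twisted relative setting since $R$ is assumed $\pi$-torsion free and carries the Frobenius lift $\phi$ needed for the structure map $r\mapsto(r,\phi(r),\dots,\phi^n(r))$.
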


Note that, unlike with the usual Witt vectors in mixed characteristic, addition for function-field Witt vectors
is performed componentwise. This is because the Witt polynomials~(\ref{eq-witt-poly}) are additive. This might
appear to defeat the whole point of Witt vectors and arithmetic jet spaces. But this is not so. The reason is
that while the additive structure is the componentwise one, the $A$-module structure is not. So the difference
is only that, unlike in mixed characteristic where $A=\bZ$, a group structure is weaker than $A$-module
structure. In fact, because the Witt polynomials are $k$-linear, the $k$-vector space structure on
$W_n(B)$ is the componentwise one. This is just like with the $p$-typical Witt vectors, where multiplication by
roots of $x^p-x$ can be performed componentwise.

For the convenience of the reader, we give some examples the proofs of which we leave as exercises.
If the structure map $A\to B$ factors through $A/\fp$ and $B$ is perfect, then multiplication is given by the formula
	$$
	(x_0,x_1,\dots)\cdot (y_0,y_1,\dots) = (z_0,z_1,\dots), \quad 
	\text{where} \quad z_n = \sum_{i+j=n}x_i^{\xqb^j}y_j^{\xqb^i}.
	$$
For example, if $B=R=A/\fp=\bF_{\xqb}$, then $W(B)$ is identified with the power-series ring $B[[\pi]]$,
where $\pi$ corresponds to the Witt vector $(0,1,0,0,\dots)$.
At the opposite extreme, where $\pi$ is invertible in $B$, the ghost map is an isomorphism.
So $W(B)$ is isomorphic to the product ring $B\times B\times\cdots$ and not a power-series
ring.

\subsection{Operations on Witt vectors}
\label{subsec-witt-operations}
Now we recall some important operators on the Witt vectors. There are the 
\emph{restriction}, or \emph{truncation}, maps $T:W_n(B) \map W_{n-1}(B)$ given by $T(x_0,\dots,x_n) = 
(x_0,\dots, x_{n-1})$. Note that $W(B) = \varprojlim W_n(B)$.
There is also the {\it Frobenius} ring homomorphism 
$F:W_n(B) \map W_{n-1}(B)$, which can be described in terms of the ghost map. 
It is the unique map which is functorial in $B$ and makes the 
following diagram commutative
	\begin{align}
	\xymatrix{
	W_n(B) \ar[r]^w \ar[d]_F & \Pi^n_\phi B \ar[d]^{F_w} \\
	W_{n-1}(B) \ar[r]_-w & \Pi_\phi^{n-1} B^n
	} \label{F}
	\end{align}
As with the ghost components, $T$ is an $R$-algebra map but $F$ lies over the Frobenius endomorphism $\phi$
of $R$.

Next we have the {\it Verschiebung} $V:W_{n-1}(B) \map W_n(B)$ given by 
$$V(x_0,\dots,x_{n-1}) = (0,x_0,\dots,x_{n-1}).$$ Let 
$V_w:\Pi_\phi^{n-1}B \map \Pi_\phi^n B$ be the additive map given by 
$$V_w(w_0,..,w_{n-1})= (0, \pi w_0,\dots,\pi w_{n-1}).$$ Then the Verschiebung
$V$ makes the following diagram commute:
	\begin{align}
	\xymatrix{
	W_{n-1}(B) \ar[r]^-w \ar[d]_V & \Pi_\phi^{n-1}B \ar[d]^{V_w} \\
	W_n(B) \ar[r]_-w & \Pi_\phi^nB
	}\label{V}
	\end{align}
For all $n \geq 0$ the Frobenius and the Verschiebung satisfy the identity
	\begin{align}
	\label{FV-pi}
	FV(x) = \pi x.
	\end{align}
The Verschiebung is not a ring homomorphism, but it is $k$-linear.

Finally, we have the multiplicative Teichm\"uller map $[~]:B \map W_n(B)$ given by 
$x\mapsto [x]= (x,0,0,\dots)$. Here in the function-field setting, $[~]$ is additive and even a 
homomorphism of $k$-algebras but is not a homomorphism of $A$-algebras. This can be compared to
the mixed-characteristic setting, where
it is a homomorphism of monoids but not a homomorphism of $\bZ$-algebras.

\subsection{Computing the universal map to Witt vectors}
Given an $R$-algebra $C$ with a $\pi$-derivation $\d:C\map C$ and an
$R$-algebra map $f:C \map B$, we will now describe the universal lift $g:C \map W(B)$. 
The explicit description of $g$ leads us to proposition \ref{coordinate} which
is used in section~\ref{sec-computation2} in computations for Drinfeld modules of rank
$2$. The reader may skip this subsection without breaking continuity till then.

It is enough to work in the case where both $B$ and $C$ are flat over $R$. 
Then the ghost map $w:W(B) \map \Pi_\phi^\infty B$ is injective.
Consider the map $[\phi]: C \map \Pi_\phi^\infty C$ given by $x \mapsto 
(x,\phi(x), \phi^2(x),\dots)$. 
Then we have the following commutative diagram:
	$$\xymatrix{
	& & C \ar[ld]_{f\circ [\phi]} \ar[d]^{[\phi]}\ar@/_1pc/[lld]_g \\
	W(B) \ar[r]^w \ar[d]^F & \Pi_\phi^\infty B \ar[d]^{F_w} & \Pi_\phi^\infty C 
	\ar[l]_f \ar[d]^{F_w}\\
	W(B) \ar[r]^w & \Pi_\phi^\infty B & \Pi_\phi^\infty C \ar[l]_f
	}$$
Thus the map $f\circ [\phi]:C \map \Pi_\phi^\infty B$ factors through
$W(B)$ as our universal map $g:C \map W(B)$. 

Let us now give an inductive description of the map $g$. Write 
	$$
	g(x)= (x_0,x_1,\cdots) \in W(B).
	$$ 
Then from the above diagram $w\circ g = f\circ [\phi]$. Therefore
the vector $(x_0,x_1,\dots)$ is the unique solution to the system of equations
	\begin{equation}
	\label{Ps}
	x_0^{\xqb^n}+ \pi x_1^{\xqb^{n-1}}+ \cdots + \pi^n x_n = f(\phi^n(x)),
	\end{equation}
for $n\geq 0$. For example, we have  $x_0 = f(x)$ and $x_1 = f(\d(x))$. 

Now consider the case where $B$ itself has a $\pi$-derivation,
$C=B$, and $f= \mathbbm{1}$. 
For any $x \in B$, let us write $x^{(n)} := \d^n(x)$, or simply $x'=\d(x)$,
$x''=\d^2(x)$ and so on.

\begin{proposition}
\label{coordinate}
We have $x_0= x$, $x_1 = x'$ and $x_2 = x'' + \pi^{\xqb-2}(x')^\xqb.$
\end{proposition}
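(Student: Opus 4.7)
The plan is to solve the defining system~(\ref{Ps}) from the previous subsection in the case $f = \mathbbm{1}$, which reads
\[
x_0^{\xqb^n} + \pi x_1^{\xqb^{n-1}} + \cdots + \pi^n x_n = \phi^n(x), \qquad n \geq 0,
\]
for $n = 0, 1, 2$ in turn. Since $B$ is flat over $R$, each new equation determines $x_n$ uniquely from the previous components once $\phi^n(x)$ is made explicit. So the real task is to unfold $\phi^n(x)$ for $n\leq 2$ using the identity $\phi(y) = y^{\xqb} + \pi \delta(y)$ and the ring-homomorphism property of $\phi$.

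Two features of the setup of Section~2 do the heavy lifting. First, $\phi \colon R \to R$ is an $\hA$-algebra endomorphism and $\pi \in \hA$, so $\phi(\pi) = \pi$; this lets us slide $\phi$ past scalar multiplication by $\pi$ with no correction. Second, $\hA$ has characteristic $p$, and hence so do $R$ and $B$; since $\xqb$ is a power of $p$, the $\xqb$-th power map on $B$ is additive, so binomial expansions of the form $(a+b)^{\xqb}$ collapse to $a^{\xqb} + b^{\xqb}$. Together these force
\[
\phi^2(x) \,=\, \phi\bigl(x^{\xqb} + \pi x'\bigr) \,=\, \phi(x)^{\xqb} + \pi\phi(x') \,=\, x^{\xqb^2} + \pi^{\xqb}(x')^{\xqb} + \pi(x')^{\xqb} + \pi^2 x''.
\]

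Now I read off the three components in order. The $n = 0$ equation gives $x_0 = x$. The $n = 1$ equation becomes $x^{\xqb} + \pi x_1 = x^{\xqb} + \pi x'$, and cancelling (using flatness) yields $x_1 = x'$. Substituting $x_0, x_1$ into the $n=2$ equation and cancelling the common terms $x^{\xqb^2} + \pi(x')^{\xqb}$ leaves $\pi^2 x_2 = \pi^{\xqb}(x')^{\xqb} + \pi^2 x''$, and dividing by $\pi^2$ (again using flatness) produces $x_2 = x'' + \pi^{\xqb - 2}(x')^{\xqb}$, which is the claimed formula.

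There is no serious obstacle here beyond bookkeeping; the statement is essentially a direct computation, and the only thing one must be alert to is invoking the characteristic-$p$ simplification at the right place, so that the middle binomial terms in $(x^{\xqb} + \pi x')^{\xqb}$ disappear cleanly and no residual factors of $\pi^{\xqb}$ get buried in the answer. One could in principle carry the same procedure further to compute $x_n$ for $n \geq 3$, but no closed form is being claimed beyond $n = 2$, and the expressions grow rapidly as $\phi$ is iterated on the $\delta^k(x)$.
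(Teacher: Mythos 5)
Your proposal is correct and follows essentially the same route as the paper: solve the ghost-component system for $n=0,1,2$, expanding $\phi^2(x)$ via $\phi(y)=y^{\xqb}+\pi\delta(y)$ and the characteristic-$p$ additivity of the $\xqb$-power map, then cancel and divide by $\pi^2$ using torsion-freeness. The only difference is that you spell out the inputs ($\phi(\pi)=\pi$, Frobenius additivity) that the paper leaves implicit.
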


\begin{proof}
As stated above, equalities $x_0=x$ and $x_1=x'$ follow immediately from~(\ref{Ps}).
For $n=2$, we have
	\begin{eqnarray*}
	x_0^{\xqb^2}+ \pi x_1^\xqb + \pi^2 x_2 &=& \phi^2(x) \\
	&=& \phi(x^\xqb+ \pi x') \\
	&=& \phi(x)^\xqb + \pi \phi(x') \\
	&=& x^{\xqb^2} + \pi^\xqb (x')^\xqb + \pi((x')^\xqb + \pi x'')
	\end{eqnarray*}
And therefore we have $x_2 = x''+ \pi^{\xqb-2} (x')^\xqb$.
\end{proof}

\section{$A$-module schemes, Jet Spaces and prelimineries}

An $A$-module scheme over $S= \Spf R$ is by definition a pair $(E,\vp_E)$, where $E$ is a commutative group
object in the category of $S$-schemes and $\vp_E:A \map \End(E/S)$ is a ring map. (Here and below, by a scheme
over the formal scheme $S$, we mean a formal scheme formed from a compatible family of schemes over the schemes
$\Spec R/\fp^n R$.) Then the tangent space $T_0E$ at the identity has two $A$-modules structures: one coming by
restriction of the usual $R$-module structure to $A$, and the other coming from differentiating $\vp_E$. We
will say that $(E,\vp_E)$ is {\it strict} if these two $A$-module structures coincide, that is if
the composition 
	$$
	A \map \End(E/S) \map \End_R(T_0E)
	$$
agrees with the composition
	$$
	A \longlabelmap{\theta} R \longmap \End_R(T_0E).
	$$
We say it is \emph{admissible} if
it is both strict and isomorphic to the additive group $\hG=\hat{\mathbb{G}}_{\mathrm{a}/S}$ as a group scheme.
 
We will denote this induced map to tangent space as $\theta: A \map R$.
(Note that it is best practice to require only the isomorphism with $\hG$ to exist locally on $S$. So below, our
Drinfeld modules would more properly be called \emph{coordinatized} Drinfeld modules.)

A {\it Drinfeld module $(E,\vp_E)$ of rank $r$} is an admissible $A$-module scheme over $S$ such that for each
non-zero $a \in A$, the group scheme $\ker(\vp_E(a))$ is finite flat of degree
$|a|^r=\xqa^{-r\mathrm{ord}_\infty(a)}$ over $S$. (See~\cite{Ge1}, (1.4), or~\cite{Laumon-book-vol1}, p.\ 4.)

\begin{proposition}
\label{restricted}
Let $f$ be an endomorphism of the $\bF_\xqa$-module scheme $\hat{\mathbb{G}}_{\mathrm{a}/S}$ over $S$. Then
given any coordinate $x$ on $E$, the map $f$ is of the form 
	$$
	f(x)= \sum_{i=0}^\infty a_i x^{\xqa^i},
	$$
where $f$ is a restricted power series, meaning $a_i \map 0$ $\pi$-adically as $i \map \infty$.
\end{proposition}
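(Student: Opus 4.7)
The plan is to view $\hG=\hat{\bb{G}}_{\mathrm{a}/S}$ as the $\pi$-adic formal affine line $\Spf R\{x\}$, where $R\{x\}$ denotes the $\pi$-adic completion of $R[x]$. Then an endomorphism $f$ of the underlying formal $S$-scheme is recorded by a restricted power series $f(x)=\sum_{i\geq 0} a_i x^i\in R\{x\}$ (\emph{restricted} meaning $a_i\to 0$ $\pi$-adically in $R$). Being a group-scheme homomorphism, $f$ satisfies $f(0)=0$ and $f(x+y)=f(x)+f(y)$; being $\bF_\xqa$-linear, it additionally satisfies $f(cx)=cf(x)$ for all $c\in\bF_\xqa$. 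The proof amounts to showing that these two conditions force $a_i=0$ unless $i$ is a nonnegative power of $\xqa$.

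First I would exploit additivity. Comparing coefficients in $f(x+y)=f(x)+f(y)$ (carried out in the two-variable restricted power series ring) yields $a_i\binom{i}{j}=0$ in $R$ for every $0<j<i$. Now $A$ is an $\bF_\xqa$-algebra, hence of characteristic $p$, and so is $R$; thus $\binom{i}{j}$ is sensibly identified with its image in $\bF_p\subset R$. If $i$ is not a power of $p$, Lucas's theorem produces some $0<j<i$ with $\binom{i}{j}\not\equiv 0\pmod p$, which, being a nonzero element of the field $\bF_p$, is a unit in $R$, forcing $a_i=0$. This reduces $f$ to the form $\sum_{k\geq 0}a_{p^k}x^{p^k}$.

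Next I would impose $\bF_\xqa$-linearity. Substituting this surviving expansion into $f(cx)=cf(x)$ gives $a_{p^k}(c^{p^k}-c)=0$ for every $c\in\bF_\xqa$. Writing $\xqa=p^h$, if $h\nmid k$ then $\bF_\xqa\not\subseteq\bF_{p^k}$, so some $c\in\bF_\xqa$ satisfies $c^{p^k}\neq c$; the difference is a nonzero element of the subfield $\bF_\xqa\subset R$, hence a unit, so $a_{p^k}=0$. Reindexing $k=hi$ yields $p^k=\xqa^i$ and the claimed form $f(x)=\sum_{i\geq 0}a_{\xqa^i}x^{\xqa^i}$, with restrictedness of the coefficient sequence inherited from $f\in R\{x\}$. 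The only delicate points are bookkeeping ones — checking that the relevant binomials and finite-field differences land in a subfield of $R$, so that multiplication by them is injective — and there is no genuine obstacle beyond this verification.
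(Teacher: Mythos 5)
Your proposal is correct and follows essentially the same route as the paper's proof: additivity forces the exponents to be powers of $p$, and $\bF_\xqa$-linearity then eliminates the $p$-powers that are not powers of $\xqa$. The only differences are that you justify the first step explicitly via Lucas's theorem and use an arbitrary element $c\in\bF_\xqa$ with $c^{p^k}\neq c$ where the paper takes $c$ a generator of $\bF_\xqa^*$; both are routine fillings-in of the same argument.
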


\begin{proof}
Let $f \in \Hom(\hG,\hG)$ be an additive endomorphism of $\hG$. Then $f$ 
is given a restricted power series $\sum_i b_i x^i$ such that $b_i \map 0$ as 
$i \map \infty$. Since $f$ is additive, we have $b_i=0$ unless $i$ is a power of $p$. 
Second, because $f$ is $\bF_\xqa$-linear, we have $\sum_i b_{p^i}(cx)^{p^i}=c\sum_i b_{p^i}x^{p^i}$
for all $c\in\bF_\xqa$. Considering the case where $c$ is a generator of $\bF_\xqa^*$,
we see this implies $b_{p^i}=0$ unless $p^i$ is a power of $\xqa$.
\end{proof}

Let $R\{\tau\}\h$ be the subring of $R\{\{\tau\}\}$ consisting of (twisted) restricted power series. 
Then by proposition \ref{restricted}, the $\bF_\xqa$-linear
morphisms between two admissible $A$-module schemes $E_1$ and $E_2$ over $\Spf R$ are given in coordinates by 
elements in $R\{\tau\}\h$ where $\tau$ acts as $\tau(x) =x^\xqa$:
\begin{equation}
	\label{eq-tau}
	\Hom_{\bF_\xqa}(E_1,E_2)=R\{\tau\}\h.
\end{equation}

\subsection{Prolongation sequences and jet spaces}
Let $X$ and $Y$ be schemes over $S=\Spf R$. We say a pair $(u,\d)$ is a {\it prolongation}, and write 
$Y \stk{(u,\d)}{\map} X$, if $u: Y \map X$ is a map of schemes over $S$ and $\d: \Ou_X \map u_*\Ou_Y$ is a 
$\pi$-derivation making the following diagram commute: 
	$$
	\xymatrix{
	R \ar[r] &  u_* \Ou_Y \\
	R \ar[u]^\d \ar[r] &  \Ou_X \ar[u]_\d \\
	} 
	$$ 
Following \cite{Bui3}, a {\it prolongation sequence} is a sequence of prolongations
	$$
	\xymatrix{
	\Spf R & T^0 \ar_-{(u,\d)}[l] & T^1 \ar_-{(u,\d)}[l] & \cdots\ar_-{(u,\d)}[l]},
	$$
where each $T^n$ is a scheme over $S$. We will often use the notation $T^*$ or $\{T_n\}_{n \geq 0}$.
Note that if the  $T^n$ are flat over $\Spf R$ then having a 
$\pi$-derivation $\d$ is equivalent to having lifts of Frobenius $\phi:T^{n+1}\to T^n$.

Prolongation sequences form a category $\mcal{C}_{S^*}$, where a morphism $f:T^*\to U^*$ is 
a family of morphisms $f^n:T^n\to U^n$ commuting with both the $u$ and $\d$, in the evident sense.
This category has a final object $S^*$ given by $S^n=\Spf R$ for all $n$, where each $u$ is the identity and
each $\d$ is the given $\pi$-derivation on $R$.

For any scheme $Y$ over $S$, for all $n \geq 0$ we define the $n$-th jet space $J^nX$ (relative to $S$) as 
	$$
	J^nX (Y) := \Hom_S(W_n^*(Y),X)
	$$
where $W_n^*(Y)$ is defined in section 10.3 of \cite{Bo2}.
We will not define $W_n^*(Y)$ in full generality here. Instead,
we will define $\Hom_S(W_n^*(Y),X)$ in the affine case, and that will be sufficient for the purposes of this
paper. Write $X = \Spf C$ and $Y=\Spf B$. Then $W_n^*(Y)= \Spf W_n(B)$ and so $J^nX (B)$
is the set of $R$-algebra homomorphisms $C\to W_n(B)$:
	\begin{equation}
		\label{eq-jet-witt}
		J^nX (B)=\Hom_R(C,W_n(B)).
	\end{equation}

Then $J^*X:= \{J^nX \}_{n \geq 0}$ forms a prolongation sequence,
called the {\it canonical prolongation sequence}. As in the mixed-characteristic
case (\cite{Bui3}, proposition (1.1)), $J^*X$ satisfies the following 
universal property---for any $T^* \in \mcal{C}_{S^*}$ and $X$ a scheme over 
$S^0$, we have
	$$
	\Hom(T^0,X) = \Hom_{\mcal{C}_{S^*}}(T^*, J^*X)
	$$

Let $X$ be a scheme over 
$S= \Spf R$. Define $X^{\phi^n}$ by $X^{\phi^n}(B) := X(B^{\phi^n})$ for any $R$-algebra $B$. 
In other words, $X^{\phi^n}$ is $X \times_{S,\phi^n} S$, the pull-back of $X$ under the map $\phi^n:S\to S$.
Next define 
	$$
	\Pi^n_\phi X= X \times_S X^\phi \times_S \cdots \times_S X^{\phi^n}.
	$$ 
Then for any $R$-algebra $B$ we have $X(\Pi_\phi^n B) = X(B)\times_S \cdots \times_S X^{\phi^n}(B)$.
Thus the ghost map $w$ in theorem \ref{wittdef} defines a map of $S$-schemes 
	$$
	w:J^nX \map \Pi_\phi^nX.
	$$ 
Note that $w$ is injective when evaluated on points with coordinates in any flat $R$-algebra.

The operators $F$ and $F_w$ in (\ref{F}) induce maps $\phi$ and $\phi_w$ 
as follows
\begin{align}
	\xymatrix{
	J^nX \ar[r]^w\ar[d]_\phi & \Pi_\phi^n X \ar[d]^{\phi_w}\\
	J^{n-1}X \ar[r]_w & \Pi_\phi^{n-1} X
	}
	\label{phiw}
\end{align}
where $\phi_w$ is the left-shift operator given by
	$$
	\phi_w(w_0,\dots,w_n)= (\phi_S(w_1),\dots,\phi_S(w_n)),
	$$
and where  $\phi_S:X^{\phi^i} \map X^{\phi^{i-1}}$ is the composition given in the following diagram:
\begin{align}
	\label{ko}
	\xymatrix{
	X^{\phi^i}\ar[r]^-{\sim} & X^{\phi^{i-1}} \times_{S,\phi} S \ar[d] 
	\ar[r]^-{} & 
	X^{\phi^{i-1}} \ar[d] \\
	& S \ar[r]_\phi & S.
	}
\end{align}

Now let $E$ be an $A$-module scheme over $S$ with action map $A \stk{\vp_E}\to \End_S(E)$.
Then the functor it represents takes values in $A$-modules, and hence so does
the functor $B\mapsto E(W_n(B))$.
In this way, for each $n\geq 0$, the $S$-scheme $J^n E$ comes with an $A$-module structure.
We denote it by $\vp_{J^nE}:A \map \End_S(J^nE)$. Similarly, 
$\vp_E$ induces an $A$-linear structure $\vp_{E^{\phi^n}}$ on each
$E^{\phi^n}$. In this case, it is easy to describe explicitly. It is the componentwise one:
	$$
	\vp_{\Pi_\phi^n E}(w_0,\dots,w_n)= (\vp_E(w_0),\dots,\vp_{E^{\phi^n}}(w_n)).
	$$
The ghost map $w:J^nE\to \Pi_\phi^n E$ and the truncation map $u:J^nE\to J^{n-1}E$
homomorphisms of $A$-module schemes over $S$. 
This is because they are given by applying the $A$-module scheme $E$ to the $R$-algebra maps
$w:W_n(B)\to \Pi^n_\phi B$ and $T:W_n(B)\to W_{n-1}(B)$. On the other hand,
the Frobenius map $\phi:J^nE\to J^{n-1}E$ is a homomorphisms of $A$-module schemes lying over 
the Frobenius endomorphism $\phi$ of $S$. In other words, the induced map $J^nE\to (J^{n-1}E)^\phi$
is a homomorphism of $A$-module schemes over $S$.

\subsection{Coordinates on jet spaces}
Given an isomorphism of $S$-schemes $E\to\hG$, we have an induced bijection, by~(\ref{eq-jet-witt}),
\begin{equation}
\label{eq-jet-Witt-again}
(J^nE)(B)\isomap W_n(B).
\end{equation}
Now recall the bijection $W_n(B)\isomap B^{n+1}$ of equation~(\ref{eq-witt-coord}).
Combining the two, we see that given a coordinate 
$x$ on an admissible $A$-module scheme $E$, we have a canonical system of coordinates 
$(x_0,\dots,x_n)$ on $J^nE$. We will use these \emph{Witt coordinates} without further comment.
We emphasize once again that there are other canonical systems of coordinates on $J^nE$,
for instance the \emph{Buium--Joyal} coordinates denoted $x,x',x'',\dots$.
They are related by the formulas of proposition~\ref{coordinate}. Each has their own advantages.

We will now describe the above maps explicitly in the Buium--Joyal coordinates.
Let $\Ou(E)= R[x]\h$. Then for each $n$, $\Ou(J^nE)= R[x,x',\dots , x^{(n)}]$.
Then for each $n$, the corresponding algebra maps $u^*$ and $\phi^*$ from
$\Ou(J^nE) \map \Ou(J^{n+1}E)$ are given as follows:
\begin{align}
	u^*(x^{(i)}) &= x^{(i)}, \text{ for all } i, \nonumber \\
	\phi^*(x^{(i)}) &= (x^{(i)})^{\xqb} + \pi x^{(i+1)}, \text{ for all } i.
	\label{eq:phi-man}
\end{align}

\subsection{Character groups}
\label{sec:char-groups}
Let $\hG$ denote the additive group over $S$, i.e., the formal spectrum of the $\pi$-adic completion of $R[x]$,
with the tautological $A$-module structure $\vp_{\hG}$
given by the usual multiplication of scalars: $\vp_{\hG}(a) = a\tau^0$.
We will maintain this convention throughout the paper.

Given a prolongation sequence $T^*$ we can define its shift $T^{*+n}$ by 
$(T^{*+n})^j:= T^{n+j}$ for all $j$ (as in \cite{Bui3}, p.\ 106).
	$$
	\Spf R \stk{(u,\d)}{\leftarrow} T^n \stk{(u,\d)}{\leftarrow} T^{n+1}\dots 
	$$
We define a {\it $\d$-morphism of order $n$} from $X$ to $Y$ to be a 
morphism $J^{*+n}X \map J^*Y$ of prolongation sequences.
We define a {\it character of order $n$}, $\Theta:(E,\vp_E) \map (\hG,\vp_{\hG})$ 
to be a $\d$-morphism of order $n$ from $E$ to $\hG$
which is also a homomorphism of $A$-module objects.
By the same argument as in the mixed characteristic case (proposition (1.9) of~\cite{Bui3}),
an order $n$ character is equivalent to a homomorphism
$\Theta:J^nE \map \hG$ of $A$-module schemes over $S$. We denote the group of 
characters of order $n$ by $\bX_n(E)$. So we have
	$$
	\bX_n(E)=\HomA(J^nE,\hG),
	$$
which one could take as an alternative definition. Note that $\bX_n(E)$ comes with an
$R$-module structure since $\hG$ is an $R$-module scheme over $S$. Also the inverse system 
$J^{n+1}E \stk{u}{\map} J^nE$ defines a directed system 
	$$
	\bX_n(E) \stk{u^*}{\map} \bX_{n+1}(E) \stk{u^*}{\map}\cdots
	$$
via pull back. Each morphism $u^*$ is injective because each $u$ has a section
(typically not $A$-linear).
We then define $\bX_\infty(E)$ to be the $R$-module direct limit $\varinjlim \bX_n(E)$.

Similarly, pre-composing with the Frobenius map $\phi:J^{n+1}E\to J^nE$ induces a Frobenius operator
$\phi:\bX^n(E)\to \bX^{n+1}(E)$. However since $\phi:J^{n+1}E\to J^nE$ is not a morphism over $\Spf R$ but
instead lies over the Frobenius endomorphism $\phi$ of $\Spf R$, some care is required.
Consider the relative Frobenius morphism $\phi_{E/R}$, defined to be the unique
morphism making the following diagram commute:
	$$
	\xymatrix{
	J^{n+1}E \ar@{.>}^{\phi_{E/R}}[rd] \ar@/^/[rrd]^\phi \ar@/_/[ddr]& & \\
	& J^nE \times_{(\Spf R),\phi} \Spf R \ar[d] \ar[r] & J^nE \ar[d] \\
	& \Spf R \ar[r]_\phi & \Spf R 
	}
	$$
Then $\phi_{E/R}$ is a morphism of $A$-module formal schemes over $\Spf R$.
Now given a $\d$-character $\Theta:J^nE\to \hG$, define $\phi^*\Theta$ to be 
the composition
\begin{equation}
	J^{n+1}E \longlabelmap{\phi_{E/R}} J^nE \times_{(\Spf R),\phi} \Spf R 
	\longlabelmap{\Theta\times\mathbbm{1}} \hG\times_{(\Spf R),\phi} \Spf R\longlabelmap{\iota} \hG
\end{equation}
where $\iota$ is the isomorphism of $A$-module  schemes over $S$
coming from the fact that $\hG$ descends to $\hA$ as an $A$-module scheme. For any $R$-algebra $B$,
the induced morphism on $B$-points is
	$$
	E(W_{n+1}(B)) \longlabelmap{E(F)} E(W_n(B)^\phi) \longlabelmap{\Theta_B^\phi} 
	B^\phi \longlabelmap{b\mapsto b} B.
	$$
Note that this composition $E(W_{n+1}(B))\to B$ is indeed a morphism  of $A$-modules because
identity map $B^\phi\to B$ is $A$-linear, which is true because $\phi$ restricted to $\hA$
is the identity.

Thus we have an additive map $\bX_n(E) \to \bX_{n+1}(E)$ given by $\Theta\mapsto \phi^*\Theta$. Note
that this map is not $R$-linear. However, the map
	$$
	\phi^*:\bX_n(E) \longmap \bX_{n+1}(E)^\phi, \quad \Theta\mapsto \phi^*\Theta 
	$$ 
is $R$-linear, where $\bX_{n+1}(E)^\phi$ denotes the abelian group $\bX_{n+1}(E)$ with 
$R$-module structure defined by the law $r\cdot \Theta := \phi(r)\Theta$.
Taking direct limits in $n$, we obtain an $R$-linear map
	$$ 
	\bX_\infty(E) \longmap \bX_\infty(E)^\phi, \quad \Theta\mapsto\phi^*\Theta.  
	$$
In this way, $\bX_\infty(E)$ is a left module over the twisted polynomial ring $R\{\phi^*\}$ with
commutation law $\phi^*r = \phi(r)\phi^*$.

\section{Admissible modules}

Let $(E,\varphi_E)$ be an admissible $A$-module scheme over $S=\Spf R$.
By equation~(\ref{eq-tau}), we can write
	\begin{equation}
		\label{eq-phi}
	\varphi_E(t) = \sum a_i \tau^i
	\end{equation}
with $a_i\in R$, $a_i\to 0$, and $a_0 = \pi=\theta(t)$. For brevity, we will typically write the pair
$(E,\varphi_E)$ as $E$. We remind the reader that
$\hG$ implicitly has the tautological $A$-module structure defined in
section~\ref{sec:char-groups}.

The main purpose of this section is to establish some facts that will be used in the proof
of theorem~\ref{kernel} below.
We emphasize that in this application $E$ will not be a Drinfeld module.

\begin{proposition}\label{pro:tangent}
Any $A$-linear morphism $f:E\to G$ between admissible $A$-modules is determined by the induced morphism
on tangent spaces. More precisely,
if we write $\vp_E(t)=\pi\tau^0+\sum_{j\geq 1}a_j \tau^j$, $\vp_G(t)=\pi\tau^0+\sum_{j\geq 1}c_j \tau^j$,
and $f=\sum_{i} b_i\tau^i$, then $f$ is determined by $b_0$, as follows:
	$$
	b_r = \frac{1}{\pi-\pi^{\xqa^r}}\sum_{i=0}^{r-1} (b_ia_{r-i}^{\xqa^i}-c_{r-i}b_i^{\xqa^{r-i}}).
	$$
\end{proposition}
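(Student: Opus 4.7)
The plan is to reduce the claim to a direct computation in the twisted restricted power series ring $R\{\tau\}\h$, using only the consequence of $A$-linearity that $f$ commutes with $\vp_E(t)$. First, by Proposition~\ref{restricted} together with~\eqref{eq-tau}, any $\bF_\xqa$-linear morphism $f\colon E\to G$ between admissible $A$-module schemes over $S$ has the form $f = \sum_{i\geq 0} b_i\tau^i \in R\{\tau\}\h$. Since $\tau$ raises the local coordinate to the $\xqa$-th power, $\tau$ and all higher powers annihilate tangent vectors at the origin, so the induced map $T_0E\to T_0G$ is multiplication by $b_0$. Thus the assertion ``$f$ is determined by its tangent map'' translates to ``the $b_i$ for $i\geq 1$ are determined by $b_0$''.

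Next, I would extract the recursion from the identity $f\circ \vp_E(t) = \vp_G(t)\circ f$ in $R\{\tau\}\h$. Expanding both sides using the commutation law $\tau\cdot r = r^\xqa\tau$ and equating coefficients of $\tau^r$ yields
$$
\sum_{i=0}^{r} b_i\, a_{r-i}^{\xqa^i} \;=\; \sum_{i=0}^{r} c_{r-i}\, b_i^{\xqa^{r-i}}.
$$
For $r\geq 1$, the contributions from $i=r$ on the two sides are $b_r\pi^{\xqa^r}$ and $\pi b_r$ respectively, using $a_0=c_0=\pi$. Isolating these and rearranging the remaining $0\leq i\leq r-1$ terms produces exactly
$$
(\pi-\pi^{\xqa^r})\, b_r \;=\; \sum_{i=0}^{r-1}\bigl(b_i a_{r-i}^{\xqa^i} - c_{r-i}b_i^{\xqa^{r-i}}\bigr).
$$

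To conclude, I need the coefficient $\pi-\pi^{\xqa^r}$ to act injectively on $R$. Factoring it as $\pi(1-\pi^{\xqa^r-1})$, the second factor is a unit in the $\pi$-adically complete ring $R$, and $R$ is $\pi$-torsion free, so multiplication by $\pi-\pi^{\xqa^r}$ is injective. Hence the asserted formula, in which the inverse is computed in $K=R[1/\pi]$, uniquely pins down $b_r$ from $b_0,\dots,b_{r-1}$, and induction on $r$ finishes the proof. No deep obstacle arises; the only care required is the bookkeeping of the Frobenius twists of the $R$-coefficients when multiplying the three power series. Note that only commutativity with $\vp_E(t)$ is used, so any further $A$-linearity constraints (from other elements of $A$) are automatically consistent with the already-determined $b_i$.
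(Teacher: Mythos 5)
Your proposal is correct and follows essentially the same route as the paper: expand the single relation $f\circ\vp_E(t)=\vp_G(t)\circ f$ in $R\{\tau\}\h$, compare coefficients of $\tau^r$, isolate the $i=r$ terms to get $(\pi-\pi^{\xqa^r})b_r=\sum_{i=0}^{r-1}(b_ia_{r-i}^{\xqa^i}-c_{r-i}b_i^{\xqa^{r-i}})$, and conclude uniqueness from $\pi$-torsion-freeness of $R$ together with the invertibility of $1-\pi^{\xqa^r-1}$. The added remarks (tangent map equals multiplication by $b_0$, the inverse living in $K$) are accurate glosses but do not change the argument.
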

\begin{proof} 
Because $f$ is $B$-linear, we have
	$$
	\big(\sum_{i\geq 0} b_i \tau^i\big)\big(\pi\tau^0+\sum_{j\geq 1}a_j \tau^j\big) 
	= \big(\pi\tau^0+\sum_{j\geq 1}c_j \tau^j\big)\big(\sum_{i\geq 0} b_i \tau^i\big).
	$$
Comparing the coefficients of $\tau^r$, we have
	$$
	b_0 a_r^{\xqa^0}+\cdots+b_{r-1}a_1^{\xqa^{r-1}} + b_r \pi^{\xqa^r} 
	= \pi b_r + c_1 b_{r-1}^{\xqa}+\cdots+c_r b_0^{\xqa^r}.
	$$
Therefore we have
	$$
	b_r(\pi-\pi^{\xqa^r}) = \sum_{i=0}^{r-1} (b_ia_{r-i}^{\xqa^i}-c_{r-i}b_i^{\xqa^{r-i}}).
	$$
Since $R$ is $\pi$-torsion free and $1-\pi^{\xqa^r-1}$ is invertible for $r\geq 1$,
this determines each $b_r$ uniquely in terms of $b_0,\dots,b_{r-1}$. Therefore $b_0$ determines
each $b_r$.
\end{proof}


\begin{corollary}
\label{cor-GG}
The $R$-module map $R\to \HomA(\hG,\hG)$ defined by $b\mapsto b\tau^0$ is an 
isomorphism.
\end{corollary}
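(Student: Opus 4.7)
The plan is to apply Proposition~\ref{pro:tangent} directly, specializing both source and target to $\hG$. The statement has two assertions: well-definedness/injectivity of the map $b\mapsto b\tau^0$, and its surjectivity. The first is essentially immediate; the content lies in surjectivity, which amounts to showing that the higher Frobenius coefficients of any $A$-linear endomorphism of $\hG$ must vanish.

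For well-definedness, I would check that $b\tau^0$ is genuinely $A$-linear. Since $\vp_{\hG}(a)=\theta(a)\tau^0$ and $\tau^0$ acts as the identity, the endomorphism $b\tau^0$ commutes with $\vp_{\hG}(a)$ because $b\cdot\theta(a)=\theta(a)\cdot b$ in $R$. The map $R\to\HomA(\hG,\hG)$ is also $R$-linear in $b$, since composition with $\tau^0$ preserves $R$-linear combinations. Injectivity is clear: if $b\tau^0=0$ as a morphism of formal schemes, then $b=0$.

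For surjectivity, let $f=\sum_{i\geq 0}b_i\tau^i$ be an arbitrary $A$-linear endomorphism of $\hG$. In the notation of Proposition~\ref{pro:tangent}, we have $E=G=\hG$, so $a_j=c_j=0$ for $j\geq 1$ (the only nonzero coefficient being $a_0=c_0=\pi$, which does not appear in the recursion, which only sums over $i=0,\dots,r-1$ with indices $r-i\geq 1$). The recursion therefore gives
\[
b_r(\pi-\pi^{\xqa^r})=0
\]
for each $r\geq 1$. Since $\pi-\pi^{\xqa^r}=\pi(1-\pi^{\xqa^r-1})$ and $1-\pi^{\xqa^r-1}$ is a unit in $R$ (because $\pi$ lies in the maximal ideal and $R$ is $\pi$-adically complete), while $R$ is $\pi$-torsion free by flatness, we conclude $b_r=0$ for all $r\geq 1$. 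Hence $f=b_0\tau^0$ lies in the image of the map in question.

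There is no real obstacle here: once Proposition~\ref{pro:tangent} is in hand, the corollary follows by a one-line specialization. The only mild subtlety is noting that the factor $\pi-\pi^{\xqa^r}$ is nonzero and non-zero-divisor in $R$ for $r\geq 1$, which uses both the flatness hypothesis on $R$ and the fact that $\pi$ is a uniformizer.
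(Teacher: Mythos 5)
Your proposal is correct and is essentially the paper's intended argument: the corollary is stated as an immediate consequence of Proposition~\ref{pro:tangent}, obtained by specializing to $E=G=\hG$ so that all higher coefficients $a_j,c_j$ vanish and the recursion forces $b_r=0$ for $r\geq 1$, with the same justification (flatness of $R$ and invertibility of $1-\pi^{\xqa^r-1}$) that the paper uses in the proof of that proposition. The well-definedness and injectivity checks you add are routine and consistent with the identification $\HomA(\hG,\hG)\subseteq R\{\tau\}\h$.
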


Now consider the subset $S^\dagger \subset R\{\tau\}\h$ defined by 
	\begin{align}
	S^\dagger := \big\{ \sum_{i\geq 0} b_i \tau^i \in R\{\tau\}\h
		\mid v(b_i) \geq i,\mb{ for all }i \mb{ and } b_0 \in R^*\big\}.
	\end{align}
Here, and below, we write $v(b)$ for the minimal $i$ such that $b\in\fp^i R$.
(Note that $v$ may not be a valuation if $R$ is not a discrete valuation ring.)


\begin{proposition}
\label{pal2}
$S^\dagger$ is a group under composition.
\end{proposition}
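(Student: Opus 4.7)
The group axioms I need to verify are closure, identity, and inverses, since associativity of composition is automatic (composition of restricted power series matches multiplication in the associative ring $R\{\tau\}\h$). The identity element is $\tau^0$, which visibly lies in $S^\dagger$.

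For closure, given $f=\sum b_i\tau^i$ and $g=\sum c_j\tau^j$ in $S^\dagger$, I would use the commutation law $\tau\cdot r=r^{\xqa}\tau$ to compute
$$(fg)_n = \sum_{i+j=n} b_i c_j^{\xqa^i}.$$
For $n=0$, this is $b_0c_0$, a product of units in $R^*$. For $n\geq 1$, each summand has valuation at least $v(b_i)+\xqa^i v(c_j)\geq i+\xqa^i j$, which is $\geq n$ both when $j=0$ (giving $i=n$ and using $c_0\in R^*$) and when $j\geq 1$ (since $\xqa^i\geq 1$ forces $i+\xqa^i j\geq i+j=n$). Hence $fg\in S^\dagger$.

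For inverses, I would first factor $f=(b_0\tau^0)\cdot(\tau^0+h)$ with $h=\sum_{i\geq 1}(b_0^{-1}b_i)\tau^i$, since $b_0\tau^0$ is trivially inverted by $b_0^{-1}\tau^0\in S^\dagger$. The crucial observation is that because $h$ has zero constant term, the $\tau^n$-coefficient of $h^k$ is a sum over $k$-tuples $(i_1,\dots,i_k)$ of positive integers with $\sum i_\ell = n$ of terms of the form $h_{i_1}h_{i_2}^{\xqa^{i_1}}\cdots h_{i_k}^{\xqa^{i_1+\cdots+i_{k-1}}}$, and this index set is empty once $k>n$. Thus the formal geometric series $\sum_{k\geq 0}(-1)^k h^k$ reduces to a finite sum in each $\tau^n$-coefficient, giving a well-defined two-sided inverse to $\tau^0+h$ in $R\{\tau\}\h$. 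The same sub-multiplicative estimate as in the closure step shows each such coefficient has valuation $\geq \sum \xqa^{i_1+\cdots+i_{\ell-1}}i_\ell\geq n$, so this inverse lies in $S^\dagger$. Its constant term is $1\in R^*$, coming from the $k=0$ term.

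The only real point to watch is the bookkeeping with the twisted commutation law when computing coefficients of products and iterated powers of $h$; once that is set up correctly, the valuation inequalities and the finite-sum phenomenon for the coefficients of $h^k$ make everything purely formal and bypass any need for $\pi$-adic convergence arguments.
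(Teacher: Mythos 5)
Your proof is correct, and the closure step is essentially identical to the paper's (both come down to $v(b_ic_j^{\xqa^i})\geq i+\xqa^i j\geq i+j$, with the convention that $v(b)\geq i$ means $b\in\fp^iR$). Where you differ is the inverse step. The paper writes down an explicit recursion for a left inverse, $c_n=-b_0^{-\xqa^n}(c_0b_n+c_1b_{n-1}^{\xqa}+\cdots+c_{n-1}b_1^{\xqa^{n-1}})$, proves $v(c_n)\geq n$ by induction on $n$, then separately writes a recursion for a right inverse in the big ring $R\{\{\tau\}\}$ and uses associativity there to conclude the left inverse is two-sided and hence lies in $S^\dagger$. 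You instead factor $f=(b_0\tau^0)\circ(\tau^0+h)$ and invert $\tau^0+h$ by the geometric series $\sum_{k\geq 0}(-1)^kh^k$, which is coefficientwise a finite sum because $h$ has no constant term; the telescoping identity gives a two-sided inverse in one stroke, and your term-by-term estimate $i_1+\xqa^{i_1}i_2+\cdots\geq i_1+\cdots+i_k=n$ is the same inequality the paper uses, just applied to the expanded monomials rather than fed through an induction. Your route buys a cleaner treatment of two-sidedness (no separate right-inverse construction, no appeal to associativity in $R\{\{\tau\}\}$ to reconcile left and right inverses), while the paper's recursion has the mild advantage of producing the inverse's coefficients by an explicit closed recursion, which can be convenient for computations like those in the later sections. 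The only step you leave implicit is the last one: having inverted both factors in $S^\dagger$, you should say that $f^{-1}=(\tau^0+h)^{-1}\circ(b_0^{-1}\tau^0)$ lies in $S^\dagger$ by the closure step already proved; this is immediate, and with it your argument is complete.
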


Note that a similar group of automorphisms appears in Dupuy~\cite{du}, section 4.3.

\begin{proof}
The fact that $S^\dagger$ is a submonoid of $R\{\tau\}\h$ under composition follows immediately
from the law $b\tau^i\circ c\tau^j = bc^{\xqa^i}\tau^{i+j}$ and linearity. Indeed if
$v(b)\geq i$ and $v(c)\geq j$, then $v(bc^{\xqa^i}) \geq i+j$.

Now let us show that any element $f = \sum b_i \tau^i \in S^\dagger$ has an inverse under composition. Let 
$g =\sum_{n=0}^\infty c_n \tau^n$, where $c_0 = b_0^{-1}$ and we define inductively 
$c_n = - b_0^{-\xqa^n}(c_0b_n +c_1 b_{n-1}^\xqa + \dots + c_{n-1} b_1^{\xqa^{n-1}})$. 
Then it is easy to check that $g \circ f = \mathbbm{1}$.
Take $n\geq 1$ and assume $v(c_i) \geq i$ for all $i=0,\dots ,n-1$. Then it is enough to show $v(c_n) \geq n$.
We have $v(c_n) \geq \min\{v(c_ib_{n-i}^{\xqa^i})\,|\, i=0,\dots , n-1\}$. Now
	\begin{eqnarray*}
	v(c_ib_{n-i}^{\xqa^i}) &=& v(c_i) + \xqa^i v(b_{n-i}) \\
	&=& i + \xqa^i(n-i) \\
	& \geq & i+ (n-i) = n.
	\end{eqnarray*}
Therefore the left inverse $g$ of $f$ lies in $S^\dagger$.
 
Now consider $g'= \sum_{n=0}^\infty d_n \tau^n \in R\{\{\tau\}\}$, where $d_0 = b_0^{-1}$ and we inductively 
define $d_n= -b_0^{-1}(b_1 d_{n-1}^{\xqa^1} + b_2 d_{n-2}^{\xqa^2} +\dots  + b_n d_0^{\xqa^n})$. 
Then as above, one can easily check that $f \circ g' = \mathbbm{1}$ and hence it is a right inverse of $f$ in
$R\{\{\tau\}\}$. But using the associativity property of $R\{\{\tau\}\}$
we get $g'=(g\circ f) \circ g' = g \circ (f\circ g')=g$ and hence $g$ is both a left and
right inverse of $f$ in $S^\dagger$.
\end{proof}

\begin{proposition}\label{pro:formal2}
Let $B$ denote the subring $\bF_q[t]\subseteq A$.
Let $f:E\to G$ be a $B$-linear homomorphism of admissible $A$-module schemes over $\Spf R$.
Then $f$ is $A$-linear.
\end{proposition}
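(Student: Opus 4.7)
The plan is to leverage Proposition~\ref{pro:tangent} by observing that its proof only really uses $B$-linearity rather than full $A$-linearity. So the first step I would take is to inspect that proof: the only identity of $f$ with the module structures that is used is $f\circ\vp_E(t) = \vp_G(t)\circ f$, which is precisely $B$-linearity since $B=\bF_q[t]$. The recursive formula derived there then determines each coefficient $b_r$ from $b_0$ alone, so in fact the following stronger statement holds: any $B$-linear morphism between admissible $A$-modules is determined by its tangent map, and in particular, a $B$-linear morphism with vanishing tangent is zero.

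Given this, I would prove $A$-linearity of $f$ one scalar at a time. Fix $a\in A$ and consider the difference
\[
h_a \;:=\; f\circ\vp_E(a) \;-\; \vp_G(a)\circ f.
\]
Because $A$ is commutative, $\vp_E(a)$ commutes with $\vp_E(t)$ and $\vp_G(a)$ commutes with $\vp_G(t)$; combined with the $B$-linearity of $f$, this shows that both $f\circ\vp_E(a)$ and $\vp_G(a)\circ f$ are $B$-linear, hence so is $h_a$. Since $E$ and $G$ are strict, the induced action of $a$ on each tangent space is multiplication by $\theta(a)\in R$, so the tangent of $h_a$ is $df\cdot\theta(a)-\theta(a)\cdot df=0$. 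By the strengthened form of Proposition~\ref{pro:tangent} from the first step, $h_a=0$. Since $a\in A$ was arbitrary, $f$ is $A$-linear.

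The main (and mild) obstacle is really just the bookkeeping in step one: one must be sure that the recursion in the proof of Proposition~\ref{pro:tangent} still determines $b_r$ from $b_0$ when only $t$-linearity is assumed, and in particular that the factor $\pi-\pi^{\xqa^r}$ is nonzero and not a zero-divisor in $R$ for all $r\geq 1$. This is immediate because $\pi$ is a uniformizer in a $\pi$-torsion-free ring, so $1-\pi^{\xqa^r-1}$ is a unit. After that observation the rest of the argument is purely formal, and notably does not require the hypothesis $\xqa\geq 3$ that appears elsewhere in this section.
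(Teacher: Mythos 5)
Your proposal is correct and follows essentially the same route as the paper: the paper's proof likewise notes that Proposition~\ref{pro:tangent} (whose argument only uses commutation with $\vp(t)$, i.e.\ $B$-linearity) applies, observes that $f\circ\vp_E(a)$ and $\vp_G(a)\circ f$ are $B$-linear and agree on tangent spaces by admissibility, and concludes they coincide. Your only cosmetic deviation is phrasing this via the difference $h_a$ rather than comparing the two composites directly, which changes nothing of substance.
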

\begin{proof}
Given any element $a\in A$, we will show $\vp_G(a)\circ f = f\circ \vp_E(a)$.
Both sides are $B$-linear homomorphisms $E\to G$; indeed, $f$ is $B$-linear by assumption,
and both $\vp_G(a)$ and $\vp_E(a)$ are $B$-linear because $A$ is commutative.
Furthermore, on tangent spaces, $\vp_G(a)\circ f$ is multiplication by $af'(0)$,
and $f\circ \vp_E(a)$ is multiplication by $f'(0)a$; this is because the $A$-module schemes are admissible.
Thus the two morphisms agree on tangent spaces and therefore they agree, by proposition~\ref{pro:tangent}.
\end{proof}

In other words, the forgetful functor from admissible $A$-modules schemes over $R$ to admissible
$B$-module schemes over $R$ is fully faithful. This remains true if we allow $B$ to be not just $\bF_q[t]$
but any sub-$\bF_q$-algebra of $A$ strictly containing $\bF_q$.

\begin{lemma}
\label{palha}
If $\xqa \geq 3$, then $\xqa^i - \xqa^{i-j} -j-1 \geq 0$ for all $j=1,\dots ,i$.
\end{lemma}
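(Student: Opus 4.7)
The plan is to factor out the common power of $\xqa$ from $\xqa^i - \xqa^{i-j}$ and reduce the inequality to a one-variable statement that can be proved by induction on $j$.

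First I would write
$$
\xqa^i-\xqa^{i-j}=\xqa^{i-j}\bigl(\xqa^j-1\bigr).
$$
Since $j\le i$ implies $i-j\ge 0$, we have $\xqa^{i-j}\ge 1$, so it suffices to prove the cleaner auxiliary inequality
$$
\xqa^j-1\ge j+1\qquad\text{for all }j\ge 1\text{ and all }\xqa\ge 3,
$$
equivalently $\xqa^j\ge j+2$. Note that the hypothesis $\xqa\ge 3$ is exactly what is needed in the base case $j=1$: $\xqa-1\ge 2$ iff $\xqa\ge 3$, which shows that the assumption cannot be weakened.

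For the inductive step, assuming $\xqa^j\ge j+2$, I would estimate
$$
\xqa^{j+1}=\xqa\cdot\xqa^j\ge 3\xqa^j\ge 3(j+2)=3j+6\ge (j+1)+2,
$$
which closes the induction. Combining this with the factorisation above gives $\xqa^i-\xqa^{i-j}\ge j+1$, i.e.\ the desired inequality $\xqa^i-\xqa^{i-j}-j-1\ge 0$.

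The argument is entirely elementary and there is no serious obstacle; the only point worth flagging is that the hypothesis $\xqa\ge 3$ is sharp, as the case $\xqa=2$, $i=j=1$ already fails ($2-1-1-1=-1$). This explains why this numerical lemma is isolated: it will presumably be invoked in the forthcoming construction of the isomorphism $N^1\cong\hG$, where the case $\xqa=2$ must be excluded (or handled by separate considerations).
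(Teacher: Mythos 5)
Your proof is correct, but it takes a genuinely different route from the paper. You factor $\xqa^i-\xqa^{i-j}=\xqa^{i-j}(\xqa^j-1)$, discard the factor $\xqa^{i-j}\geq 1$, and reduce to the single-variable inequality $\xqa^j\geq j+2$, which you prove by induction on $j$; the whole argument is discrete and arithmetic. The paper instead fixes $i$, regards $j$ as a real variable $x\in[1,i]$, and studies $f(x)=\xqa^i-\xqa^{i-x}-x-1$: it checks $f(1)\geq 0$ and shows $f'(x)=\xqa^{i-x}\ln\xqa-1\geq 0$ on $[1,i]$ using $\ln\xqa>1$ for $\xqa\geq 3$, so $f$ is nondecreasing and hence nonnegative there. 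Both arguments use the hypothesis $\xqa\geq 3$ in an essential way (your base case $j=1$; the paper's bound $\ln\xqa>1$), and your observation that $\xqa=2$, $i=j=1$ fails shows the hypothesis is sharp. Your version has the mild advantages of avoiding calculus and of isolating a bound ($\xqa^j\geq j+2$) independent of $i$; the paper's monotonicity check is shorter to write down. Either proof serves equally well for the application (the valuation estimate in theorem~\ref{pal3}, and through it the identification of the kernels $H^n$ with $\hG$ in theorem~\ref{kernel}).
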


\begin{proof}	
Consider $f(x)= \xqa^i - \xqa^{i-x} -x-1$, for $1\leq x \leq i$. 
Then $f(1) \geq 0$ since $\xqa \geq 3$. Now $f'(x) = \xqa^{i-x}\ln{\xqa} -1 $. 
Since $\ln{\xqa} > 1$ for $\xqa \geq 3$, we have $f'(x) \geq 0$ for all 
$1 \leq x \leq i$ and hence $f(x) \geq 0$ for all $1 \leq x \leq i$ and we are 
done. 
\end{proof}

\begin{lemma}
\label{dod}
For $q=2$ and $i \geq 2$, $q^i-i-1 \geq 1$.
\end{lemma}
\begin{proof}
Consider the function $h(x)= q^x-x$ for $x \geq 2$.
Then $h'(x)=q^x \ln{q}  -1= \ln{q^{q^x}} -1> 0$ since $x \geq 2$. Therefore $h$
is a strictly increasing function and hence the minimum is attained at i=2.
Therefore $q^i-i \geq q^2-2= 2$ and the result follows. 
\end{proof}

\begin{lemma}
\label{q2}
For $q=2$ and $i \geq 2$ and $j=1,\dots , i$
$$q^i- q^{i-j}-j \geq 1$$
\end{lemma}
\begin{proof}
For $j=i$, the result follows from lemma \ref{dod}. So we may
assume $1 \leq j \leq i-1$.
Let $H(x):= q^i-q^{i-x}-x$ where $1 \leq x \leq i-1$. Then
$H'(x)= q^{i-x} \ln{q} -1$. Since $x \leq i-1$ implies $i-x \geq 1$ and
hence $q^{i-x} \geq q$. Therefore we get
\begin{eqnarray*}
H'(x) &\geq& q\ln{q} -1 \\
&=& \ln{q^q} -1 \\
&=& \ln{4} -1 ~~~\{\mb{since } q=2\} \\
&>& 0 \\
\end{eqnarray*}

Hence $H(x)$ is a strictly increasing function within the interval
$1 \leq x \leq i-1$. Therefore the minimum is achieved at $x=1$ and
we have
\begin{eqnarray*}
q^i - q^{i-j}-j &\geq& q^i - q^{i-1} - 1\\
&=& q^{i-1}(q-1) -1 \\
&=& q^{i-1} - 1 ~~~ \{\mb{because } q=2 \} \\
&\geq& q-1 ~~~ \{\mb{since } i \geq 2\} \\
&=& 1 
\end{eqnarray*}
\end{proof}


\begin{theorem}
\label{pal3}
Suppose $v(a_i) \geq \xqa^i-1$ for all $i \geq 1$, where 
the $a_i$ are as in equation (\ref{eq-phi}).
Then there exists a unique homomorphism $f: E \map \hG$
of $A$-module schemes over $S$, 
written $f= \sum_{i=0}^\infty b_i \tau^i$ in coordinates, with
$b_0 = 1$. Moreover,
\begin{enumerate}
	\item if $\xqa \geq 3$,  then $v(b_i) \geq i$
	and $f$ is an isomorphism of $A$-module schemes;
	\item if $\xqa =2 $, then $v(b_i) \geq i-1$.
\end{enumerate}
\end{theorem}

\begin{proof}
Let $f = \sum_{i=0}^\infty b_i \tau^i,~b_i \in R$, where $b_0 = 1$ and 
\begin{equation}
\label{recur}
b_i = \pi^{-1}(1-\pi^{\xqa^i-1})^{-1}\sum_{j=1}^i b_{i-j} 
a_j^{\xqa^{i-j}}. 
\end{equation}
Indeed, this is the only possible choice for $f$, by proposition~\ref{cor-GG}.
Conversely, it is easy to see that $f$ satisfies $\varphi(t) \circ f = f \circ \varphi(t)$,
which implies $\varphi(b) \circ f = f \circ \varphi(b)$ for all $b \in B$. 

(1) Assume $\xqa\geq 3$. Let us now show $v(b_i)\geq i$. For $i=0$, it is clear.
For $i\geq 1$, we may assume by induction that $v(b_j) \geq j$ for
all $j=1,\dots ,i-1$. 
By (\ref{recur}), we have
$v(b_i) \geq \min\{v(b_{i-j} a_j^{\xqa^{i-j}}) -1 \,|\, j=1,\dots,  i\}$. Now
\begin{eqnarray*}
v(b_{i-j}a_j^{\xqa^{i-j}})-1 &\geq& v(b_{i-j})+ v(a_j^{\xqa^{i-j}})-1 \\
&\geq& i-j + \xqa^{i-j}(\xqa^j-1)-1 \\
&=& i-j+ \xqa^i - \xqa^{i-j}-1 \\
&\geq& i, \text{ by lemma \ref{palha}.}
\end{eqnarray*}
Therefore we have $v(b_i) \geq i$.

Therefore $f$ is a restricted power series and hence defines a map between
$\pi$-formal schemes $f: E \map \hG$ which is $A$-linear.

Let us show that $f$ is an isomorphism.
By proposition \ref{pal2}, there exists a linear map $g: \hG \map 
E$ such that $f \circ g = g \circ f = \mathbbm{1}$. Then
$g$ is also $A$-linear for formal reasons: for any $a \in A$, 
we have
$f(g(\varphi(a)x))= \varphi(a)x = f(\varphi(a)g(x))$. Since $f$ is injective,
we must have $g(\varphi(a)x) = \varphi(a) g(x)$ which shows the $A$-linearity of
$g$ and we are done. 

(2) Now assume $\xqa=2$. We want to show that $v(b_i) \geq i-1$ for all $i \geq 1$. For
$i=1$, we have $b_1= \pi^{-1}(1-\pi^{q-1})^{-1}(b_0 a_1)$ and hence
$v(b_1) \geq q-1 -1 = 0$. For $i \geq 2$ and $j=1,\dots,i$,
\begin{eqnarray*}
v(b_{i-j}a_j^{q^{i-j}}) &=& v(b_{i-j}) + q^{i-j}v(a_j) \\
&\geq& (i-j-1)+ (q^i -q^{i-j}), \quad \mb{since } v(a_j) \geq q^j-1.
\end{eqnarray*}
Hence to show $v(b_i) \geq i-1$, it is enough to show that
$q^i - q^{i-j}-j \geq 0$ and that follows from lemma \ref{q2}.
\end{proof}

The remainder of this section consists of an interesting observation which will not however be used in
this paper. Letting $\Ga^{\forl}$ denote the formal completion of $\hG$ along the identity 
section $\Spf R \map \hG$. Thus we have $\Ga^{\forl} = \Spf R[[x]]$, where
$R[[x]]$ has the $(\pi, x)$-adic topology. We want to extend the $A$-action on $\Ga^{\forl}$ to
an action of $\hA$:
	\begin{equation}
	\label{extension-unram}
	\hA \map \End_{\bF_\xqa}(\Ga^{\forl}/S).
	\end{equation}
Recall that
$\End_{\bF_\xqa}(\Ga^{\forl})$ agrees with the non-commutative power-series ring $R\{\{\tau\}\}$, 
with commutation law $\tau b = b^\xqa\tau$ for $b\in R$. (See for example~\cite{drin74}, \S 2.) Therefore for any $a \in A$, we can write
\oldmarginpar{Where explained?}
	$$
	\vp(a) = \sum_j \alpha_j \tau^j
	$$
where $\alpha_j\in R$. Each $\alpha_j$ can be thought of as a function of $a\in A$. To construct
(\ref{extension-unram}) it is enough to prove that these functions are $\mfrak{p}$-adically continuous, which
also implies that such an extension to a continuous $\hA$-action is unique. This is a consequence of the 
following result.

\begin{proposition}
If $a \in \mfrak{p}^n$, then $\alpha_j \in \mfrak{p}^{n-j}R$.
\end{proposition}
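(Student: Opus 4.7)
The plan is to prove the estimate by induction on $n$, using the multiplicativity of $\varphi$ together with the skew composition law in $R\{\tau\}\h$. The case $n = 0$ is vacuous since $\mathfrak{p}^{m}R = R$ whenever $m \leq 0$. For the base case $n = 1$ and $a \in \mathfrak{p}$, the coefficient $\alpha_0$ is exactly the scalar by which $\varphi(a)$ acts on the tangent space of $\hG$; by admissibility this equals $\theta(a)$, and since $\theta$ factors through $\hat{A}$ and sends $\hat{\mathfrak{p}}$ into $\pi R$, we obtain $\alpha_0 \in \pi R = \mathfrak{p}R$. The remaining conditions $\alpha_j \in \mathfrak{p}^{1-j}R = R$ for $j \geq 1$ are automatic.

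For the inductive step $n \geq 2$, write $a \in \mathfrak{p}^n = \mathfrak{p}\cdot\mathfrak{p}^{n-1}$ as a finite sum $\sum_i b_i c_i$ with $b_i \in \mathfrak{p}$ and $c_i \in \mathfrak{p}^{n-1}$. Because $\mathfrak{p}^{n-\ell}R$ is closed under addition, it suffices to bound the $\tau^\ell$-coefficient of $\varphi(b)\circ \varphi(c)$ for a single pair with $b \in \mathfrak{p}$ and $c \in \mathfrak{p}^{n-1}$. Writing $\varphi(b) = \sum_j \beta_j \tau^j$ and $\varphi(c) = \sum_k \gamma_k \tau^k$, the inductive hypothesis gives $v(\beta_j) \geq \max(0, 1-j)$ and $v(\gamma_k) \geq \max(0, n-1-k)$. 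The commutation rule $b\tau^j \circ c\tau^k = bc^{q^j}\tau^{j+k}$ then shows that the $\tau^\ell$-coefficient of $\varphi(b)\circ\varphi(c)$ is $\sum_{j+k=\ell} \beta_j \gamma_k^{q^j}$, with each summand of $\mathfrak{p}$-adic valuation at least $\max(0,1-j) + q^j\max(0, n-1-k)$.

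The main (and essentially only) content of the argument is a case analysis showing this lower bound is at least $n - \ell$ whenever $n - \ell > 0$. When $j = 0$ the bound is $1 + \max(0, n-1-\ell)$, which clearly dominates $n - \ell$. When $j \geq 1$ and $k \geq n-1$ the target $n - \ell = n - j - k$ is already $\leq 0$, so nothing is required. In the remaining case $j \geq 1$ and $m := n - 1 - k \geq 1$, the desired inequality $q^j m \geq m - j + 1$ rearranges to $(q^j - 1)m + j \geq 1$, which is obvious from $q \geq 2$. This completes the induction and yields $\alpha_\ell \in \mathfrak{p}^{n-\ell}R$ for every $\ell$, establishing the proposition.
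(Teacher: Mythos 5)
Your proof is correct, and it rests on the same engine as the paper's argument: induction on $n$, multiplicativity of $\vp$, and the bookkeeping in the twisted composition law, where the Frobenius twist $(\cdot)^{\xqa^{j}}$ amplifies the inductively known bounds and strictness supplies the one extra factor of $\pi$ in the untwisted term. The route differs in the decomposition. The paper writes $a=\pi b$ with $b\in\mfrak{p}^{n}$, expands $\vp(\pi)\vp(b)$ via $\alpha_j=\sum_{k}\gamma_k\beta_{j-k}^{\xqa^{k}}$, and gets the crucial $+1$ from $\gamma_0=\pi$; you instead write $a\in\mfrak{p}^{n}=\mfrak{p}\cdot\mfrak{p}^{n-1}$ as a finite sum $\sum_i b_ic_i$, treat each product $\vp(b_i)\circ\vp(c_i)$ with the base case on the left factor, and get the $+1$ from strictness in the form $\alpha_0=\theta(b_i)\in\pi R$ --- the same mechanism in mirror image, with the twist landing on the right-hand coefficients instead of the left-hand ones. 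Your version buys a little extra rigor: since $\mfrak{p}$ need not be principal in $A$, an element of $\mfrak{p}^{n}$ need not be divisible by $t$, so the paper's step ``write $a=\pi b$ with $b\in\mfrak{p}^{n-1}$'' is literally available only when $(t)=\mfrak{p}$ (or after passing to $\hA$), whereas your decomposition into sums of products $b_ic_i$ covers the general case directly, at the cost of a slightly longer case analysis on the exponents. The paper's version buys a shorter computation with a single explicit product formula. Either way the inequalities close, and your case analysis ($j=0$; $j\geq 1$ with $k\geq n-1$; $j\geq 1$ with $n-1-k\geq 1$) is complete and correct.
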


\begin{proof}	
Clearly, it is true for $n=0$. Now assume it is true 
for some given $n$. Suppose $a \in \mfrak{p}^{n+1}$ and write $a=\pi b$, where $b \in \mfrak{p}^n$. Let
$\vp(b)= \sum_j \beta_j \tau^j$ and $\vp(\pi) = \sum_k \gamma_k \tau^k$. Then we have
	$$
	\sum_j \alpha_j \tau^j= \vp(a) = \vp(\pi) \vp(b) = \sum_k \gamma_k \tau^k \sum_j \beta_j \tau^j 
	= \sum_{k,j} \gamma_k \beta_j^{\xqa^k} \tau^{j+k} 
	$$
and hence $\alpha_j= \sum_{k=0}^j \gamma_k \beta_{j-k}^{\xqa^k}$. 
So to show $\alpha_j\in\fp^{n+1-j}R$, it suffices to show 
	$$
	\gamma_k\beta_{j-k}^{\xqa^k}\in\fp^{n+1-j}R, \text{ for } 0\leq k\leq j\leq n+1.
	$$
By induction we have $\beta_{j-k}\in\fp^{n-(j-k)}R$ and hence
$\gamma_k\beta_{j-k}^{\xqa^k}\in\fp^{(n-(j-k))\xqa^k}R$. Since we have $(n-(j-k))\xqa^k\geq n-j+1$ for 
$k\geq 1$, we then have $\gamma_k\beta_{j-k}^{\xqa^k}\in\fp^{n-j+1}R$. For $k=0$, because $\vp$ is a strict
module structure, we have $\gamma_0 = \pi$ and hence $\gamma_0\beta_j\in \pi\fp^{n-j}R= \fp^{1+n-j}R$.
\end{proof}

\section{Characters of $N^n$---upper bounds}

We continue to let $E$ denote the admissible $A$-module scheme over $S$ of~(\ref{eq-phi}).
Let $N^n$ denote the kernel of the projection $u:J^nE \map E$. Thus we have 
a short exact sequence of $A$-module schemes over $S$:
	$$
	0 \map N^n \map J^nE \stk{u}{\map} E \map 0
	$$
The purpose of this section is to analyze the character group of $N^n$.
In the applications of this section, $E$ will eventually be a Drinfeld module, but we do not need to 
assume this yet.

Let us fix a coordinate $x$ on $E$, and denote the corresponding Buium--Joyal coordinates on $J^nE$
by $x,x',\dots,x^{(n)}$.
From now on, let us abusively write $\phi$ for the Frobenius pull back $\phi^*$ of~(\ref{eq:phi-man}).

\begin{lemma}
\label{phi}
For all $n \geq 0$, $\phi^n(x) = \pi^n x^{(n)} + O(n-1)$, where $O(n-1)$
are elements of order less than equal to $n-1$.
\end{lemma}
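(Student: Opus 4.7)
The claim is essentially bookkeeping about how the Frobenius lift $\phi$ interacts with the $\pi$-derivation $\d$ in Buium--Joyal coordinates, so I would prove it by induction on $n$. I interpret ``$O(n-1)$'' to mean a polynomial expression with coefficients in $R$ in the lower $\pi$-derivatives $x, x', \ldots, x^{(n-1)}$—that is, a term of order at most $n-1$ in the higher derivatives. The base case $n=0$ is trivial (both sides equal $x$), and $n=1$ is the defining relation $\phi(x) = x^{\xqb} + \pi x' = \pi x' + O(0)$, since $x^{\xqb}$ is a polynomial in $x$ alone.

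For the inductive step, assume $\phi^n(x) = \pi^n x^{(n)} + r$ with $r$ of order $\leq n-1$, and apply $\phi$. Since $\phi$ is an $\hA$-algebra endomorphism of $R$ and $\pi \in \hA$, we have $\phi(\pi) = \pi$, so
\[
\phi(\pi^n x^{(n)}) = \pi^n \phi(x^{(n)}) = \pi^n \left( (x^{(n)})^{\xqb} + \pi\, \d(x^{(n)}) \right) = \pi^{n+1} x^{(n+1)} + \pi^n (x^{(n)})^{\xqb}.
\]
The extra term $\pi^n (x^{(n)})^{\xqb}$ involves no derivative past $x^{(n)}$, hence is $O(n)$.

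For the remainder, since $r$ is a polynomial in $x, x', \ldots, x^{(n-1)}$ and $\phi$ is a ring homomorphism, we have $\phi(r) = r^{\xqb} + \pi\, \d(r)$. The first summand remains a polynomial in $x, \ldots, x^{(n-1)}$, so lies in $O(n-1) \subseteq O(n)$. For the second, the $\pi$-derivation Leibniz rule, applied iteratively, sends any polynomial in $x, \ldots, x^{(n-1)}$ to a polynomial in $x, \ldots, x^{(n)}$, so $\pi \d(r) \in O(n)$. Combining, $\phi^{n+1}(x) = \pi^{n+1} x^{(n+1)} + O(n)$, completing the induction.

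There is no serious obstacle here; the only two things to keep track of are that $\phi(\pi) = \pi$ (so the leading $\pi^n$ is preserved by $\phi$ and boosted to $\pi^{n+1}$ via the formula $\phi(y) = y^{\xqb} + \pi\d(y)$ applied to $y = x^{(n)}$), and that each application of $\d$ raises the order of a polynomial in the Buium--Joyal coordinates by at most one.
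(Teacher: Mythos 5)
Your proof is correct and follows essentially the same route as the paper: induction on $n$, applying $\phi(y)=y^{\xqb}+\pi\,\d(y)$ to the leading term $\pi^{n}x^{(n)}$ (using $\phi(\pi)=\pi$ and $\d(x^{(n)})=x^{(n+1)}$) and observing that $\phi$, hence $\d$, raises the order of the remainder by at most one. Nothing to add.
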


\begin{proof}
For $n=0$, it is clear. For $n\geq 1$, we have by induction
	\begin{align*}
	\phi^n(x) &=\phi(\pi^{n-1}x^{(n-1)}+O(n-2)) \\
	&= \pi^{n-1}\phi(x^{(n-1)}) + O(n-1) \\
	&= \pi^{n-1}(\pi \delta(x^{(n-1)})+ (x^{(n-1)})^\xqb) + O(n-1)\\
	&= \pi^n x^{(n)} +O(n-1).
	\end{align*}
\end{proof}

\begin{theorem}
\label{kernel}
For any $n\geq 1$, let $H^n$ denote the kernel of the projection $u:J^{n}E\to 
J^{n-1}E$.  Then there is a unique $A$-linear homomorphism $\vartheta_n:H^n\to 
\hG$ of the form
$$\vartheta_n(x^{(n)})=x^{(n)}+b_1(x^{(n)})^\xqa+b_2(x^{(n)})^{\xqa^2}+\cdots,
$$ 
where $b_i\in R$.
Moreover, $\vartheta_n$ freely generates
$\HomA(H^n,\hG)$ as an $R$-module, and
\begin{enumerate}
	\item if $q \geq 3$, then $v(b_i) \geq i$ and $\vartheta_n$ is an isomorphism of $A$-module schemes;
	\item if $q=2$, then $v(b_i) \geq i-1$.
\end{enumerate}

\end{theorem}

\begin{proof}
First observe that we have 
	\begin{align*}
		\vp_E(t)\phi^n(x) &= \phi^n(\vp_E(t)) \\
			 &= \phi^n(\pi) \phi^n(x) + \phi^n(a_1) \phi^n(x)^\xqa + \dots  + \phi^n(a_r) \phi^n(x)^{\xqa^r}. 
	\end{align*}
Second, the subscheme $H^n$ is defined by setting the $x,x',\dots ,x^{(n-1)}$ coordinates to $0$. 
Combining these two observations and lemma \ref{phi}, we obtain
	$$
	\pi^n \vp_E(t)x^{(n)} =  \pi\pi^n x^{(n)} + \phi^n(a_1) (\pi^n x^{(n)})^\xqa + \dots  + 
	\phi^n(a_r) (\pi^n x^{(n)})^{\xqa^r}
	$$
and hence
	$$
	\vp_E(t)x^{(n)} = \pi x^{(n)} + \phi^n(a_1) \pi^{n(\xqa-1)} (x^{(n)})^\xqa + \dots  + 
	\phi^n(a_r) \pi^{n(\xqa^r-1)} (x^{(n)})^{\xqa^r}.
	$$
But then by theorem \ref{pal3}, there is a unique $A$-linear
homomorphism $\vartheta_n$ of the kind
desired for the respective cases of $q \geq 3$ and $q=2$.
Moreover by proposition \ref{pro:tangent}, $\HomA(H^n,\hG)$ is
freely generated by $\vartheta_n$ as an $R$-module. Finally, by proposition~\ref{pal2},
$\vartheta_n$ an isomorphism when $q \geq 3$. 
\end{proof}


Now consider the exact sequence
	$$
	0 \to H^n \to N^n \to N^{n-1} \to 0
	$$
and the corresponding long exact sequence 
\oldmarginpar{need exposition on ---homological algebra??}
	$$
	0 \map \HomA(N^{n-1},\hG) \map \HomA(N^n,\hG) \map \HomA(H^n,\hG) \map\cdots.
	$$
The image of the map $\HomA(N^n,\hG) \map \HomA(H^n,\hG)$
can be regarded as a sub-$R$-module of $R$, by theorem \ref{kernel} above.
Therefore in the $R$-module filtration
	$$
	\HomA(N^n,\hG) \supseteq \HomA(N^{n-1},\hG) \supseteq \cdots \supseteq \HomA(N^0,\hG) = 0,
	$$
each associated graded piece is canonically a submodule of $R$.

In particular, we have the following:
\begin{proposition}
\label{rk}
If $R$ is a discrete valuation ring, then $\HomA(N^n,\hG)$ is a free $R$-module of rank at most $n$.
\end{proposition}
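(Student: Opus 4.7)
The plan is to induct on $n$, using the filtration and associated graded just described. The base case is immediate since $N^0 = 0$ (as $J^0E = E$), so $\HomA(N^0,\hG) = 0$ is free of rank $0$.

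For the inductive step, suppose $\HomA(N^{n-1},\hG)$ is free of rank at most $n-1$. From the exact sequence $0 \to H^{n-1} \to N^n \to N^{n-1} \to 0$ of $A$-module schemes, applying $\HomA(-,\hG)$ yields the left-exact sequence
$$
0 \longrightarrow \HomA(N^{n-1},\hG) \longrightarrow \HomA(N^n,\hG) \longrightarrow \HomA(H^{n-1},\hG).
$$
By theorem~\ref{kernel} (and the remark following it covering $\xqa=2$), $\HomA(H^{n-1},\hG)$ is a free $R$-module of rank $1$, canonically isomorphic to $R$. Let $M$ denote the image of the rightmost map; then $M$ is a submodule of $R$. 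Because $R$ is a discrete valuation ring (hence a PID), every submodule of $R$ is either $0$ or a nonzero principal ideal, which is free of rank $1$. So $M$ is free of rank at most $1$.

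We therefore obtain a short exact sequence
$$
0 \longrightarrow \HomA(N^{n-1},\hG) \longrightarrow \HomA(N^n,\hG) \longrightarrow M \longrightarrow 0.
$$
Since $M$ is free, hence projective, this sequence splits, giving $\HomA(N^n,\hG) \cong \HomA(N^{n-1},\hG) \oplus M$, which is free of rank at most $(n-1) + 1 = n$. This completes the induction.

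There is no substantial obstacle here; the only non-formal input is theorem~\ref{kernel}, which identifies each graded piece with a submodule of $R$, and the DVR hypothesis, which ensures such submodules are free and hence that the extensions split. The statement and proof would fail over a general $\pi$-adically complete flat $R$-algebra, where a submodule of $R$ need not be free and extensions of free modules by free modules need not split.
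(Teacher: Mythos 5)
Your proof is correct and follows essentially the same route as the paper: the paper sets up exactly this filtration of $\HomA(N^n,\hG)$ via the sequences $0\to H\to N^n\to N^{n-1}\to 0$, notes by theorem~\ref{kernel} that each graded piece embeds in $R$, and then deduces freeness of rank at most $n$ over the DVR $R$ — the splitting/induction details you spell out are precisely what the paper leaves implicit.
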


\section{The Lateral Frobenius and characters of $N^n$}
\label{sec-lateral}
We continue to let $E$ denote the admissible $A$-module scheme over $S$ of~(\ref{eq-phi}).

Now we will construct a family of important operators which we call the {\it lateral Frobenius} operators.
That is, for all $n$, we will construct maps $\mfrak{f}:N^{n+1} \map N^n$ which are lifts of Frobenius relative
to the projections $u:N^{n+1} \map N^n$ and hence make the system $\{N^n\}_{n=0}^\infty$ into a prolongation
sequence. Do note that {\it a priori} the $A$-modules $N^n$ do not form a prolongation sequence to start with.

Let $N^\infty$ denote the inverse limit the projection maps $u:N^{n+1} \map N^n$. (Here and below, we take
inverse limits in the category of presheaves on $R$-algebras in which $\pi$ is nilpotent. They are representable by
affine formal schemes.) Then the maps $\mfrak{f}$ induce a lift of Frobenius on $N^\infty$. Similarly on
$J^\infty E = \lim_n J^nE$, the maps $\phi$ induce a lift of Frobenius. Now for all $n \geq 1$, the inclusion
$N^n \inj J^nE$ is a closed immersion and hence induces a closed immersion of schemes 
$N^\infty \inj J^\infty E$. But $\mfrak{f}$ is not obtained by restricting $\phi$ to $N^\infty$. In fact,
$\phi$ does not even preserve $N^\infty$. So $\mfrak{f}$ is an interesting operator which is distinct from
$\phi$, although it does satisfy a certain relation with $\phi$ which we will explain below.

Here we would also like to remark that the lateral Frobenius can also be constructed in the
mixed-characteristic setting of $p$-jet spaces of arbitrary schemes~\cite{bosa1},
but it is much more involved.

Let $F:W_n \map W_{n-1}$ and $V:W_{n-1} \map W_n$ denote the Frobenius 
and Verschiebung maps of~\ref{subsec-witt-operations}. 
Let us arrange them in the following diagram, although it does not commute. 
\begin{equation}
	\label{rem}
	\xymatrix{
	W_n \ar[r]^V \ar[d]_{F} & W_{n+1} \ar[d]^F\\
	W_{n-1} \ar[r]^V  & W_n \ar[d]^F\\
	& W_{n-1}
	}
\end{equation}
Rather the following is true
	\begin{equation}
	\label{FVF}
	FFV = FVF.
	\end{equation}
Indeed, the operator $FV$ is multiplication by $\pi=\theta(t)$, and $F$ is a morphism of $A$-algebras.

We can re-express this in terms of jet spaces using the natural identifications
$J^nE \simeq W_n$ and $N^n \simeq W_{n-1}$. For jet spaces, let us switch to the notation
$i:= V$ and $\phi:=F$ for the right column of~(\ref{rem}). Then we define the \emph{lateral Frobenius}
	$$
	\mfrak{f}:N^{n+1}\to N^n
	$$
simply to be the map $F:W_n\to W_{n-1}$ in left column. Thus (\ref{rem}) becomes the following:
	\begin{equation}
	\label{trapezium}
	\xymatrix{
	N^{n+1} \ar[r]^i \ar[d]_{\mfrak{f}} & J^{n+1}E \ar[d]^\phi\\
	N^n \ar[r]^i  & J^nE \ar[d]^\phi\\
	& J^{n-1}E
	}
	\end{equation}
Note again that this diagram is not commutative.
However rewriting (\ref{FVF}) in the above notation, we do have
	\begin{equation}
	\label{iphi}
	\phi^{\circ 2} \circ i = \phi \circ i \circ \mfrak{f}.
	\end{equation}


We emphasize that when we use the notation $N^n$, the $A$-module structure will always be understood to be the
one that makes $i$ an $A$-linear morphism. It should not be confused with the $A$-module structure coming by
transport of structure from the isomorphism $N^n\simeq W_{n-1}=J^{n-1}E$ of group schemes.

We also emphasize that while $i$ is a morphism of $S$-schemes, the vertical arrows
$\phi$ and $\mfrak{f}$ in the diagram above lie over the Frobenius endomorphism $\phi$ of $S$, rather
than the identity morphism.

\begin{lemma}
\label{FV}
For any torsion-free $R$-algebra $B$, the map $FV: W_n(B) \map W_n(B)$ is injective.
\end{lemma}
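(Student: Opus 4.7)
The plan is to exploit the identity $FV=\pi$ from~(\ref{FV-pi}) and reduce injectivity of $FV$ to injectivity of multiplication by $\pi$ on $W_n(B)$, which I will verify via the ghost map.

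First, by~(\ref{FV-pi}) the composition $FV\colon W_n(B)\to W_n(B)$ coincides with multiplication by $\pi=\theta(t)$ as an $R$-module map. So it suffices to show that multiplication by $\pi$ on $W_n(B)$ is injective whenever $B$ is $\pi$-torsion free.

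Next, I would invoke the ghost map $w\colon W_n(B)\to \Pi_\phi^n B$. By theorem~\ref{wittdef}, $w$ is a natural transformation of $R$-algebra functors, so in particular it is an $R$-algebra homomorphism, and thus intertwines multiplication by $\pi$ on the two sides. For $\pi$-torsion free $B$ the ghost map is moreover injective: from the Witt polynomials~(\ref{eq-witt-poly}) one recovers the Witt coordinates inductively via $\pi^i x_i = w_i - (x_0^{\xqb^i}+\pi x_1^{\xqb^{i-1}}+\cdots+\pi^{i-1} x_{i-1}^\xqb)$, and division by $\pi^i$ in $B$ is legal precisely because $B$ has no $\pi$-torsion.

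Finally, multiplication by $\pi$ on $\Pi_\phi^n B=B\times B^\phi\times\cdots\times B^{\phi^n}$ acts componentwise, and each factor is $B$ (with possibly Frobenius-twisted $R$-algebra structure), which is $\pi$-torsion free. Hence multiplication by $\pi$ is injective on $\Pi_\phi^n B$. Combining the pieces: if $FV(x)=\pi x=0$ in $W_n(B)$, then $\pi\cdot w(x)=w(\pi x)=0$ in $\Pi_\phi^n B$, forcing $w(x)=0$, whence $x=0$ by injectivity of $w$. There is no genuine obstacle here; each step is a direct consequence of the structural results on Witt vectors recalled earlier. The one point requiring mild care is the injectivity of $w$ on a torsion-free $R$-algebra, which is the standard triangular-in-coordinates inversion of the Witt polynomials.
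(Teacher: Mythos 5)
Your proposal is correct and follows essentially the same route as the paper: the paper also notes that torsion-freeness of $B$ makes the ghost map injective, hence $W_n(B)$ is $\pi$-torsion free, and concludes from the identity $FV=\pi$. Your additional detail on inverting the Witt polynomials to justify injectivity of $w$ is fine but not a different argument.
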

\begin{proof}
Since $B$ is torsion free, the ghost map $W_n(B)\to B\times\cdots\times B$ is injective,
and hence $W_n(B)$ is torsion free. The result then follows because $FV$ is multiplication by $\pi$.
\end{proof}

\begin{proposition}
\label{latfrob}
The morphism $\mfrak{f}:N^n \map N^{n-1}$ is $A$-linear.
\end{proposition}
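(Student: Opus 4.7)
The plan is to use the key commutation relation~(\ref{iphi}), $\phi^{\circ 2}\circ i = \phi\circ i\circ \mfrak{f}$, to reduce the $A$-linearity of $\mfrak{f}\colon N^n\to N^{n-1}$ to that of the inclusion $i\colon N^n\inj J^n E$ (which is $A$-linear by the convention fixing the $A$-module structure on $N^n$) and that of $\phi\colon J^n E\to J^{n-1}E$ (established in Section~4). The case $n=1$ is trivial since $N^0 = 0$, so assume $n\geq 2$. Concretely, for any $a\in A$, I will show that the two morphisms $\mfrak{f}\circ \vp_{N^n}(a)$ and $\vp_{N^{n-1}}(a)\circ \mfrak{f}$ from $N^n$ to $N^{n-1}$ agree after post-composition with $\phi\circ i\colon N^{n-1}\to J^{n-2}E$, and then cancel $\phi\circ i$.

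For the equality after post-composition, applying~(\ref{iphi}) and the $A$-linearity of $i\colon N^n\inj J^n E$ yields $\phi\circ i\circ \mfrak{f}\circ \vp_{N^n}(a) = \phi^{\circ 2}\circ i\circ \vp_{N^n}(a) = \phi^{\circ 2}\circ \vp_{J^n E}(a)\circ i$. Since $\phi$ is $A$-linear and restricts to the identity on the image of $\hA\supseteq A$ in $R$, no Frobenius twist appears when commuting $\phi^{\circ 2}$ past $\vp_{J^nE}(a)$, so this becomes $\vp_{J^{n-2}E}(a)\circ \phi^{\circ 2}\circ i$. On the other hand, by the $A$-linearity of the inclusion $N^{n-1}\inj J^{n-1}E$ and of $\phi$, together with~(\ref{iphi}) applied to $N^{n-1}$ and again to $N^n$, $\phi\circ i\circ \vp_{N^{n-1}}(a)\circ \mfrak{f} = \phi\circ \vp_{J^{n-1}E}(a)\circ i\circ \mfrak{f} = \vp_{J^{n-2}E}(a)\circ \phi\circ i\circ \mfrak{f} = \vp_{J^{n-2}E}(a)\circ \phi^{\circ 2}\circ i$. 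Hence both compositions coincide.

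To cancel $\phi\circ i\colon N^{n-1}\to J^{n-2}E$, observe that under the Verschiebung identifications $N^{n-1}\cong W_{n-2}$ and $J^{n-2}E\cong W_{n-2}$ (as group schemes), this morphism corresponds to $F\circ V$, which is multiplication by $\pi$ by~(\ref{FV-pi}). By Lemma~\ref{FV}, this is injective on $B$-points for every $\pi$-torsion-free $R$-algebra $B$. Since $N^n$ is formal affine space over $\Spf R$, its coordinate ring is $R$-flat, and equality of morphisms of formal schemes $N^n\to N^{n-1}$ is tested by their values on the universal $B$-point (with $B$ the flat coordinate ring of $N^n$); hence the injectivity suffices to conclude $\mfrak{f}\circ \vp_{N^n}(a) = \vp_{N^{n-1}}(a)\circ \mfrak{f}$. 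The main subtlety, which is essentially bookkeeping, is that both $\phi$ and $\mfrak{f}$ lie over the Frobenius endomorphism of $\Spf R$ rather than the identity, so in principle the $A$-linearity being proved involves a $\phi$-twist; the twist is invisible here because $\phi|_{\hA}=\mathrm{id}$, so the $A$-action is unchanged under pull-back by $\phi$.
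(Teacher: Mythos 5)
Your proposal is correct and follows essentially the same route as the paper: both use the relation $\phi^{\circ 2}\circ i=\phi\circ i\circ\mfrak{f}$ together with the $A$-linearity of $\phi$ and $i$ to show the two morphisms agree after post-composing with $\phi\circ i$, and then cancel $\phi\circ i$ by identifying it with $FV$ (multiplication by $\pi$) and invoking lemma~\ref{FV} on the torsion-free coordinate ring of the source. Your extra remarks (the trivial case $N^0=0$, and the absence of a Frobenius twist because $\phi$ is an $\hA$-algebra endomorphism) are just slightly more explicit bookkeeping of points the paper takes for granted.
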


\begin{proof}
We want to show that for any $a\in A$, the two morphisms $N^{n+1}\to N^n$ given by $x\mapsto a\mfrak{f}(x)$ and
by $x\mapsto\mfrak{f}(ax)$ are equal. Since the $N^i$ are flat over $R$, it is enough to consider $B$-points
$x$, where $B$ is a $\pi$-torsion free $R$-algebra.

Since both $\phi$ and $i$ are $A$-linear morphisms, so are $\phi i$ and $\phi^2 i$.
Therefore we have 
	$$
	\phi i(\mfrak{f}(ax)) = \phi^2 i(ax) = a \phi^2 i(x) = a \phi i(\mfrak{f}(x)) = \phi i(a \mfrak{f}(x)). 
	$$
Thus the points $\mfrak{f}(ax)$ and $a\mfrak{f}(x)$ of $N^n(B)$
become equal after the application of $\phi i$. 
Now translating from the notation of diagram (\ref{trapezium}) to that of diagram (\ref{rem}), we have
two elements of $W_{n-1}(B)$ which become equal after applying $FV$. But since $FV=\pi$ and
$B$ is torsion free, lemma \ref{FV} implies these two elements must be equal.
\end{proof}

For $0 \leq i \leq k-1$, let us abusively write $\mfrak{f}^{\circ i}$ for the following composition
	$$
	\mfrak{f}^{\circ i} :N^n \stk{\overbrace{\mfrak{f} \circ \dots  \circ 
	\mfrak{f}}^\text{$i$-times}}{\map} N^{n-i} \
	\stk{u} {\map} N^{n-k}.
	$$
Then for all $1 \leq i \leq n$, we define the \emph{canonical characters} $\Psi_i \in \HomA(N^n,\hG)$ 
(associated to our implicit coordinate $x$ on $E$) by 
	\begin{align}
		\label{def-Psi-n}
	\Psi_i = \vartheta_1 \circ \mfrak{f}^{\circ {i-1}} 
	\end{align}
where $\vartheta_1$ is as in theorem~\ref{kernel}. Clearly, the maps $\Psi_i$ are $A$-linear since each one of
the maps above is. Finally, given a character $\Psi \in \HomA(N^{n-1},\hG)$, we will write 
$\mfrak{f}^*\Psi = \Psi \circ \mfrak{f}$. Note that $\mfrak{f}^*$ is semi-linear: for $\lambda\in R$,
we have
	\begin{equation}
	\mfrak{f}^*(\lambda \Psi) = \phi(\lambda)\ \mfrak{f}^*(\Psi).
	\end{equation}

The points of $J^nE$ contained in $N^n$ are those with Witt coordinates of the form
$(0,x_1,x_2,\dots,x_n)$. We will use the abbreviated coordinates $(x_1,\dots,x_n)$ on $N^n$ instead.

\begin{lemma}
\label{psi}
For all $i=1,\dots, n$, we have 
$$\Psi_i(x_1,\dots,x_n) \equiv \left\{ \begin{array}{ll}
x_1^{\xqb^{i-1}} \bmod \pi, & \mbox{ if } q \geq 3\\
x_1^{\xqb^{i-1}} + \phi(a_1) x_1^{q{\xqb}^{i-1}}  \bmod \pi, & \mbox{ if } q=2.
\end{array}
\right.$$
where $a_1$ is the first of the structure constants of the Drinfeld 
module $E$, as in (\ref{eq-phi}).
\end{lemma}

\begin{proof}
Since $\mfrak{f}$ is identified with the Frobenius map $F:W_n\to W_{n-1}$,
it reduces modulo $\pi$ to the $\xqb$-th power of the projection map. Therefore, we have
	$$
	\Psi_i(x_1,\dots,x_n) = \vartheta_1\circ \mfrak{f}^{\circ(i-1)}(x_1,\dots,x_n) 
	\equiv \vartheta_1(x_1^{\xqb^{i-1}}) \bmod \pi.
	$$

$q \geq 3$:
By part (1) of theorem~\ref{kernel}, the map $\vartheta_1$ is congruent to the 
identity modulo $\pi$.
Therefore $\Psi_i$ is congruent to $x_1^{\xqb^{i-1}}$ modulo $\pi$.

$q =2$:
By part (2) of theorem~\ref{kernel}, we have $\vartheta_1(x_1) \equiv x_1 + b_1 x_1^q \bmod\pi$,
where by (\ref{recur}), we have 
$$
b_1=\pi^{-1}(1-\pi^{q-1})^{-1}\pi^{q-1}\phi(a_1) \equiv \phi(a_1)\bmod \pi.
$$
Therefore we have $\vartheta_1(x_1)\equiv x_1+\phi(a_1) x_1^q\bmod\pi$, and so
$\Psi_i$ is congruent to $x_1^{\xqb^{i-1}} + \phi(a_1) x_1^{q{\xqb}^{i-1}} 
\bmod \pi$.
\end{proof}

\begin{proposition}
\label{linind}
If $R$ is a discrete valuation ring, then the elements $\Psi_1,\dots,\Psi_n$ form an $R$-basis for 
$\HomA(N^n,\hG)$.
\end{proposition}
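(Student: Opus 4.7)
The plan is to combine the upper bound from Proposition \ref{rk} with a Nakayama-type argument based on reduction modulo $\pi$. Write $M := \HomA(N^n,\hG)$ and $k := R/\pi R$ throughout.

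First I would realize $M$ concretely as a sub-$R$-module of the larger $R$-module $P := \Hom_{\bF_\xqa}(N^n,\hG)$, which via the Witt coordinates $(x_1,\dots,x_n)$ on $N^n$ identifies with a direct sum of $n$ copies of the twisted restricted power series ring $R\{\tau\}\h$. The crucial observation is that $M$ is $\pi$-saturated in $P$: if $g \in P$ satisfies $\pi g \in M$, then $g$ itself lies in $M$. Indeed, $A$-linearity (equivalently, commutation with $\vp_E(t)$) is an $R$-linear condition on the coefficients of $g$, and $R$ is $\pi$-torsion free, so dividing an $A$-linear element by $\pi$ again yields an $A$-linear one. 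Consequently reduction mod $\pi$ induces an injection $M/\pi M \hookrightarrow P/\pi P$.

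Next, by Lemma \ref{psi}, the image of $\bar\Psi_i$ in $P/\pi P$ is the monomial $x_1^{\xqb^{i-1}}$. These $n$ monomials have pairwise distinct degrees and are therefore linearly independent over $k$. It follows that $\bar\Psi_1,\dots,\bar\Psi_n$ are linearly independent in $M/\pi M$, giving $\dim_k M/\pi M \geq n$. But Proposition \ref{rk} bounds the rank of the free $R$-module $M$ above by $n$, so $M$ must have rank exactly $n$ and $\bar\Psi_1,\dots,\bar\Psi_n$ must form a $k$-basis of $M/\pi M$. By Nakayama's lemma, $\Psi_1,\dots,\Psi_n$ then generate $M$; since $M$ is free of rank $n$, any system of $n$ generators is automatically an $R$-basis.

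The main subtle point I would flag is the $\pi$-saturatedness step: one must verify carefully that the $A$-linearity defining $M$ inside $P$ really is $R$-linear in coefficients (it is enough to check the single relation imposed by $\vp_E(t)$, using Proposition \ref{pro:formal2}), so that the injectivity $M/\pi M \hookrightarrow P/\pi P$ goes through cleanly. Once that is in hand, the rest is a standard rank count combined with Nakayama's lemma.
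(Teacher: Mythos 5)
Your proof is correct and follows essentially the same route as the paper: the rank bound of Proposition \ref{rk}, the mod-$\pi$ computation $\Psi_i\equiv x_1^{\xqb^{i-1}}$ from Lemma \ref{psi}, and Nakayama's lemma. The only difference is your $\pi$-saturation step, which is actually unnecessary: to deduce independence in $M/\pi M$ you only need that a relation in $M/\pi M$ maps to a relation in $P/\pi P$ (which is automatic from $\pi M\subseteq\pi P$), not the injectivity of $M/\pi M\to P/\pi P$.
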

\begin{proof}
By proposition~\ref{rk}, the $R$-module $\HomA(N^n,\hG)$ is free of rank at most $n$.
So to show the elements $\Psi_1,\dots,\Psi_n$ form a basis, it is enough by Nakayama's lemma
to show they are linearly independent modulo $\pi$. 

We can view $\HomA(N^n,\hG)$ as the set of additive functions in $\sO(N^n)$. Further since $N^n$ is flat,
$\sO(N^n)$ is $\pi$-torsion free, and so any function $f\in\sO(N^n)$ is additive if $\pi f$ is. Therefore the
map $R/\pi R\otimes_R \HomA(N^n,\hG)\to R/\pi R\otimes_R\sO(N^n)$ remains injective. 

So to show they are linearly
independent in $R/\pi R\otimes_R \HomA(N^n,\hG)$, it is enough to show that
$R/\pi R\otimes_R \HomA(N^n,\hG)$ maps injectively to 
$R/\pi R\otimes_R\sO(N^n)$.
Now by lemma~\ref{psi}, we have $\Psi_i\equiv x_1^{\xqb^{i-1}}\bmod \pi$ for $q\geq 3$
(and $\Psi_i \equiv x_1^{\xqb^{i-1}} + \phi(a_1) x_1^{q\xqb^{i-1}}$ for $q=2$).
So the $\Psi_i$ map to linearly independent elements of 
$R/\pi R\otimes_R\sO(N^n)$. 
\end{proof}

\section{$\bX_\infty(E)$}
We now assume further that $R$ is a discrete valuation ring and $E$ is a Drinfeld module over $\Spf R$.
Let $r$ denote the rank of $E$. We continue to write $\vp_E(t)= a_0\tau^0+ a_1\tau^1+\dots  + a_r \tau^r$, 
where $a_0 = \pi$, $a_i \in R$ for all $i$, and $a_r\in R^*$.

In this section and the next,
we will determine the structure of $\bX_\infty(E)$. In the case of elliptic
curves, it falls in two distinct cases as to when the elliptic curve admits
a lift of Frobenius and when not. In particular, canonical lifts of ordinary elliptic curves
all fall into one case.
A similar story happens in our case when $E$ is
a Drinfeld module of rank $2$, which one might consider the closest
analogue of an elliptic curve. However, when the rank exceeds $2$, the behavior
of $\bX_\infty(E)$ offers much more interesting cases which leads us to 
introduce the concept of the {\it splitting order} $m$ of a Drinfeld module 
$E$.  The splitting order is always less than or equal to the rank of $E$. When 
the rank equals $2$, the splitting order is $1$ if and only if $E$ admits a lift of Frobenius.


We would like to point out here that our structure result for $\bX_\infty(E)$ is 
an integral version of the equal-characteristic analogue of Buium's~\cite{Bui2}. 
He shows that $\bX_\infty(E)
\otimes_R K$ is generated by a single element as a $K\{\phi^*\}$-module where
$K=R[\frac{1}{p}]$. But here we show that the module $\bX_\infty(E)$ itself is
generated by a single element as a $R\{\phi^*\}$-module. These 
methods also work in the setting of elliptic curves over $p$-adic rings, and hence this
stronger result can be achieved in that case too. (See~\cite{bosa2}.) 

The following theorem should be viewed as an analogue of the fact that
an elliptic curve has no non-zero homomorphism of $\bZ$-module schemes to $\bb{G}_\mathrm{a}$. 
In our case, we show that no Drinfeld module admits a non-zero homomorphism of $A$-module schemes
to $\hG$.

\begin{theorem}
\label{order0}
We have ${\bf X}_0(E)=\{0\}$.
\end{theorem}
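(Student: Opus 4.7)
The plan is to exploit $A$-linearity via large powers of $t$, combined with the rigidity of restricted power series on the closed unit disk. Let $f\in \bX_0(E) = \HomA(E,\hG)$; in the chosen coordinate, $f$ is a restricted power series $f = \sum_{i\geq 0} b_i \tau^i$, which unpacked reads $f(x) = \sum_{i\geq 0} b_i x^{\xqa^i}$. Since $\vp_{\hG}(t) = \pi\tau^0$, applying the $A$-linearity relation $\vp_{\hG}(a)\circ f = f\circ \vp_E(a)$ to $a = t^k$ yields
$$
\pi^k f(x) \;=\; f(\vp_E(t^k)(x))
$$
for every $k\geq 1$.

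Next I would argue that $f$ vanishes at every $t$-power torsion point. Let $\bar R$ denote the ring of integers in an algebraic closure of $K = R[1/\pi]$, and suppose $x_0\in\bar R$ satisfies $\vp_E(t^k)(x_0) = 0$. The displayed identity gives $\pi^k f(x_0) = f(0) = 0$, and since $\pi$ is not a zero divisor in $\bar R$, we conclude $f(x_0) = 0$. Viewed as an element of $R[x]$, $\vp_E(t^k)(x)$ is a polynomial of degree $\xqa^{rk}$ whose leading coefficient is a unit (a Frobenius iterate of $a_r\in R^*$) and whose constant term vanishes. After dividing by $x$, its Newton polygon descends from $(0,k)$ to $(\xqa^{rk}-1,0)$ with all intermediate vertices at non-negative height, so every slope is strictly negative; hence every nonzero $t^k$-torsion point has strictly positive valuation, i.e.\ lies in the maximal ideal of $\bar R$. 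Therefore $\bigcup_{k\geq 1} E[t^k]$ is an infinite subset of the closed unit disk on which $f$ vanishes.

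To finish, I would observe that $f$ is also a restricted power series in the variable $x$: its $x^n$-coefficient equals $b_i$ when $n = \xqa^i$ and is $0$ otherwise, so it tends to $0$ $\pi$-adically. By Weierstrass preparation in the Tate algebra $R\{x\}$, any nonzero element factors as $\pi^m u P$ with $P$ a distinguished polynomial, and therefore has only finitely many zeros in $\bar R$. Combined with the previous paragraph, this forces $f=0$, i.e.\ $\bX_0(E)=\{0\}$.

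The main delicate step is the Newton polygon argument, which guarantees that nontrivial $t^k$-torsion lies inside the closed unit disk on which $f$ converges. This is exactly where it matters that $E$ is genuinely a Drinfeld module in generic characteristic, i.e.\ that $a_0 = \pi$ and $a_r\in R^*$; without these, torsion could escape the disk of convergence and the argument would break. The concluding appeal to Weierstrass preparation is a standard $\pi$-adic fact.
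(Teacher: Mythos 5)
Your proof is correct in substance, but it takes a genuinely different route from the paper's. The paper argues purely on coefficients: writing $f=\sum_{i\geq 0} b_i\tau^i$ and comparing coefficients of $\tau^i$ for $i>r$ in the relation $\vp_{\hG}(t)\circ f=f\circ\vp_E(t)$ gives the recurrence $b_i(1-\pi^{\xqa^i-1})\pi=\sum_{j=1}^{r}a_j^{\xqa^{i-j}}b_{i-j}$; choosing the last index where the minimal valuation of the $b_i$ is attained (possible because $b_i\to 0$) and using $v(a_r)=0$ forces $v(b_N)=v(b_{N-r})-1<v(b_{N-r})$, a contradiction. That argument is elementary, stays entirely over $R$, and needs no analytic input. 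Your argument instead makes rigorous the heuristic the paper states just before the theorem --- torsion points must die in $\hG$, and there are a lot of them --- by showing that $f$ kills all $t$-power torsion, that this torsion lies in the closed unit disk, and that a nonzero restricted power series has only finitely many zeros there by Weierstrass preparation. This is more conceptual, closer to the classical argument for abelian varieties, and would adapt to other module schemes with dense integral torsion, at the cost of passing to extensions of $K$ and invoking rigid-analytic finiteness.

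Two small points need patching, though neither is fatal. First, the claim that every Newton-polygon slope is strictly negative is false in general: if some intermediate coefficient $a_i$ is a unit, then $\vp_E(t^k)(x)/x$ has a slope-zero segment and there are torsion points of valuation exactly $0$, so torsion need not lie in the maximal ideal. What you actually need --- and what is true, most easily because the leading coefficient of $\vp_E(t^k)$ is a unit, so all roots are integral over $R$ --- is only that torsion lies in the closed unit disk, where a restricted power series converges; your concluding sentences already use no more than this. Second, the infinitude of $\bigcup_{k\geq 1}E[t^k]$ deserves a line of justification: $\vp_E(t^k)(x)$ is separable because its derivative is the nonzero constant $\pi^k$ (this is where generic characteristic genuinely enters), so it has exactly $\xqa^{rk}$ distinct roots; without this, the torsion set could a priori be finite and the Weierstrass step would conclude nothing.
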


\begin{proof}
Any character $f=\sum_{i\geq 0} b_i \tau^i \in \bX_0(E)$ satisfies the following chain of equalities,
where $\theta$ is as in~(\ref{eq:theta3849}):
	\begin{eqnarray*}
	\vp_{\hG}(t) \circ f &=& f \circ \vp_E(t) \\
	\theta(t)\tau^0 \circ \sum_{i\geq 0} b_i \tau^i & =& \sum_{i\geq 0} b_i \tau^i \circ 
	\sum_{j\geq 0} a_j\tau^j\\
	\sum_{i\geq 0} \theta(t) b_i \tau^i &=& \sum_{i\geq 0} \Big( \sum_{j=0}^r b_{i-j} 
	a_j^{\xqa^{i-j}}\Big) \tau^i
	\end{eqnarray*}
Comparing the coefficients of $\tau^i$ for $i > r$, and using the equality $a_0=\theta(t)$, we have
	\begin{equation}
	\label{eq-hohum}
	b_i(1-\theta(t)^{\xqa^i-1})\theta(t) = a_r^{\xqa^{i-r}}b_{i-r} + 
	a_{r-1}^{\xqa^{i-r+1}}b_{i-r+1}+ \dots  + a_1^{\xqa^{i-1}}b_{i-1}
	\end{equation}
Suppose $f$ is nonzero. There there exists an $N$ such that $b_{N-r} \ne 0$ 
and $v(b_{N-r}) < v(b_i)$ for all $ i \geq N-r+1$. Then the valuation 
of the right-hand side of equation (\ref{eq-hohum}) for $i=N$ becomes $v(a_r^{\xqa^{i-r}}b_{N-r})= v(b_{N-r})$, 
since $v(a_r)=0$. But then taking the valuation of both sides of 
(\ref{eq-hohum}), we have $$ v(b_N)= v(b_{N-r})-1 < v(b_{N-r})$$
and $N \geq N-r+1$, which is a contradiction. Therefore $f$ must be $0$. 
\end{proof}

As a consequence the short exact sequence of $A$-module schemes over $S$
	\begin{equation}
	\label{eq-humdrum}
	0 \map N^n \stk{i}{\map}  J^nE \map E \map 0,
	\end{equation}
induces an exact sequence 
	\begin{equation}
	\label{main-ex-seq}
	0 \map \bX_n(E) \stk{i^*}{\map} \HomA(N^n,\hG) \stk{\partial}{\map} \Ext_A(E,\hG),
	\end{equation}
where $\Ext_A(E,\hG)$ denotes the group of extension classes of $A$-module schemes over $R$, as
defined in Gekeler~\cite{Ge3}, section 5. He further defines an exact sequence
	\begin{equation}
	\label{seq-Hodge}
	0 \longmap \Lie(E)^* \longmap \Ext_A^\sharp(E,\hG) \longmap \Ext_A(E,\hG) \longmap 0
	\end{equation}
of $R$-modules, where $\Ext_A^\sharp(E,\hG)$ denotes the group of classes of an extension together with
a splitting of the corresponding extension of Lie algebras. Finally one defines
	\begin{equation}
		\bH_{\mathrm{dR}} (E) = \Ext_A^\sharp(E,\hG).
	\end{equation}

\begin{theorem}\label{thm:ext-rank}
	The exact sequence (\ref{seq-Hodge}) is split. The rank of $\Ext_A(E,\hG)$ is $r-1$, and
	the rank of $\Ext_A^\sharp(E,\hG)$ is $r$.
\end{theorem}
\begin{proof}
	See diagram (5.2) and corollary 3.7 in~\cite{Ge3}.
\end{proof}

The following is the equal-characteristic analogue of a result of Buium's~\cite{Bui2}, prop.\ (3.2).
\begin{theorem}
\label{X1}
Let $(E,\varphi_E)$ be a Drinfeld module of rank $r$. 
\begin{enumerate}
	\item  $\bX_r(E)$ is nonzero.
	\item We have
		$$
		\bX_1(E) \simeq \left\{\begin{array}{l} $R,$ \text{ if $E$ has a lift of Frobenius,}\\ 
		\{0\}, \text{ otherwise.} \end{array}\right.
		$$
\end{enumerate}
\end{theorem}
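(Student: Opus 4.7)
The plan is to analyze the exact sequence~(\ref{main-ex-seq})
$$
0 \to \bX_n(E) \to \HomA(N^n,\hG) \xrightarrow{\partial_n} \Ext_A(E,\hG)
$$
separately for $n=1$ and $n=r$.

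For part~(2), I would take $n=1$. By Theorem~\ref{kernel} at $n=0$ (so that $H^0=N^1$) and Corollary~\ref{cor-GG}, $\HomA(N^1,\hG)$ is free of rank one, generated by the canonical isomorphism $\vartheta_1$. The image $\partial_1(\vartheta_1)$ is the class $[J^1E]$ of the extension $0\to N^1\to J^1E\to E\to 0$, interpreted via $\vartheta_1$ as an extension of $E$ by $\hG$. By the universal property of the jet space, an $A$-linear section of $u\colon J^1E\to E$ corresponds bijectively to a lift of Frobenius on $E$ compatible with the $A$-module structure, so $[J^1E]=0$ iff such a lift exists. Thus when a lift exists, $\partial_1=0$ and $\bX_1(E)=R$.

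In the opposite case $[J^1E]\ne 0$, I would show that $\Ext_A(E,\hG)$ is $\pi$-torsion-free, which forces $\bX_1(E)=\{r\in R: r[J^1E]=0\}=0$. Classifying extensions by matrix form gives $\Ext_A(E,\hG)\cong R\{\tau\}\h / R\{\tau\}\h\cdot\bar{\varphi}_E(t)$. Given $\pi f = h\bar{\varphi}_E(t)$, reducing mod $\pi$ in the Ore domain $k\{\tau\}$ yields $\bar h\cdot\bar{\bar{\varphi}}_E(t)=0$; since the leading coefficient $\bar a_r\in k^*$ makes $\bar{\bar{\varphi}}_E(t)$ nonzero, we get $\bar h=0$, so $h=\pi h'$ and $f=h'\bar{\varphi}_E(t)$ is a coboundary.

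For part~(1), I would take $n=r$. By Proposition~\ref{linind}, $\HomA(N^r,\hG)$ is free of rank $r$ with basis $\Psi_1,\ldots,\Psi_r$, so proving $\bX_r(E)\ne 0$ reduces to exhibiting an $R$-linear dependence among the classes $\partial_r(\Psi_i)$ in $\Ext_A(E,\hG)$. Using the relation~(\ref{iphi}) between $\mfrak{f}$ and $\phi$, one expects the pushout along $\Psi_i=\vartheta_1\circ\mfrak{f}^{\circ(i-1)}$ to equal the $(i{-}1)$-fold Frobenius pullback of $[J^1E]=\partial_r(\Psi_1)$ in $\Ext_A(E,\hG)$. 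The rank-$r$ hypothesis should then force a degree-$r$ linear relation among these Frobenius iterates, stemming from the fact that $\bar{\varphi}_E(t)$ has degree $r$ in $R\{\tau\}\h$ and acts by zero on the motive of $E$; this dependence produces a nonzero element of the kernel of $\partial_r$, hence of $\bX_r(E)$. The main obstacle is making the identification of $\partial_r(\Psi_i)$ with the Frobenius iterate precise and extracting the explicit degree-$r$ relation, which is the technical content of the argument with no analog in the order-one case.
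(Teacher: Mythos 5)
Your part (2) is essentially the paper's argument: $\HomA(N^1,\hG)$ is free of rank one on $\vartheta_1$, the image $\partial(\vartheta_1)$ is the class of $0\to N^1\to J^1E\to E\to 0$, and this class vanishes exactly when $E$ has an $A$-linear lift of Frobenius; torsion-freeness of $\Ext_A(E,\hG)$ then forces the dichotomy $R$ versus $\{0\}$. The only difference is that you prove torsion-freeness by hand from the biderivation presentation (modulo some sloppiness in writing the inner biderivations $\pi\alpha-\alpha\circ\varphi_E(t)$ as a one-sided ideal), whereas the paper simply quotes Gekeler's result that $\Ext_A(E,\hG)$ is free of rank $r-1$. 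That is fine, and arguably more self-contained.

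Part (1), however, has a genuine gap. The paper's proof is a two-line rank count: $\HomA(N^r,\hG)$ is free of rank $r$ by proposition~\ref{linind}, while $\Ext_A(E,\hG)$ is free of rank $r-1$ by Gekeler, so the $R$-linear map $\partial$ cannot be injective and $\bX_r(E)=\ker\partial\neq\{0\}$. You never invoke any bound on the size of $\Ext_A(E,\hG)$, and that bound is the crux: without it there is no reason any relation among $\partial(\Psi_1),\dots,\partial(\Psi_r)$ should exist at all (this is also precisely where the rank-$r$ hypothesis enters). Your substitute strategy --- identify $\partial(\Psi_i)$ with an $(i-1)$-fold ``Frobenius pullback'' of $\partial(\Psi_1)$ and then extract a degree-$r$ relation from $\varphi_E(t)$ --- is both unproven (you say so yourself) and problematic as stated. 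The identification is only valid modulo $\pi$: the candidate operation $[\eta]\mapsto[\tau^{\rdeg}\circ\eta]$ on cocycles sends the inner biderivation $\pi\alpha-\alpha\circ\varphi_E(t)$ to an inner one only up to the error term $(\pi^{\xqb}-\pi)\tau^{\rdeg}\alpha$, and correspondingly the lateral Frobenius $\mfrak{f}^*$ does not descend to $\bI_n(E)\subseteq\Ext_A(E,\hG)$ in general --- in the paper that descent is exactly what requires $\gamma=0$ in the proof of theorem~\ref{intlam}. Moreover, producing an honest integral (or even $K$-linear) relation with nonvanishing top coefficient is essentially the content of (\ref{imagepsi2}) and theorem~\ref{intlam}, i.e.\ the hard part of the paper; it is not needed for the mere non-vanishing claimed in (1), which follows at once from the fact that $r$ elements of a torsion-free module of rank $r-1$ are $K$-linearly dependent. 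So to repair your proof, import Gekeler's computation of $\Ext_A(E,\hG)$ (or prove some rank bound for it) rather than trying to build the relation explicitly.
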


\begin{proof}
(1): Consider the exact sequence~(\ref{main-ex-seq}).
By proposition~\ref{linind}, the $R$-module $\HomA(N^n,\hG)$ is free of rank $n$.
But also $\Ext_A(E,\hG)$ is free of rank $r-1$, by theorem~\ref{thm:ext-rank} above.
Therefore when $n=r$, the kernel $\bX_n(E)$ is nonzero. 

(2) Now consider $\bX_1(E)$. It is contained in $\HomA(N^1,\hG)$, which is free of rank $1$,
and the quotient is contained in $\Ext_A(E,\hG)$, which is torsion free. Therefore
$\bX_1(E)$ is either $\{0\}$ or all of $\HomA(N^1,\hG)\simeq R$.

Let $\mathbbm{1}$
denote the identity map in $\HomA(\hG,\hG)$. Then its image $\partial(\mathbbm{1})$ in 
$\Ext_A(E,\hG)$ is the class of the extension (\ref{eq-humdrum}).
Therefore we have the equivalences $\bX_1(E)\simeq R \iff$ $i^*$ is an isomorphism 
$\iff \partial (\mathbbm{1}) =0  \iff$ (\ref{eq-humdrum}) is split $\iff$ $E$ has a lift of Frobenius. 
\end{proof}

Define the \emph{splitting order} of the Drinfeld module $E$ to be the integer $m$ such that
$\bX_m(E)\neq \{0\}$ and $\bX_{m-1}(E) =\{0\}$. We also say that $E$ \emph{splits at order $m$}.
By theorems~\ref{order0} and~\ref{X1} above, we have $1\leq m\leq r$ and additionally
$m=1$ if and only if $E$ has a Frobenius lift.

{\bf Computation of the character of the Carlitz module:}
Let $A = \bF_q[t]$ with $q \geq 3$. Let $E$ be the Carlitz module over $R$ satisfying 
$$\vp_E(t)(x)= \pi x + x^{\xqa}.$$
Then the operator $\vp_E(t)$ itself is a lift of Frobenius and hence, by the
universal property of $J^1E$, defines the $A$-linear splitting of the exact
sequence
$$0 \map N^1 \map J^1E \map E \map 0$$
that is, an $A$-linear morphism $\nu: J^1E \map N^1$ given in Buium--Joyal coordinates by 
$\nu(x,x') = x'-x$. Then our normalised character $\Theta_1: J^1E \map \hG$ is
given by $\Theta_1 = \vartheta_1 \circ \nu $.

Define $L_i = (\pi^{q} -\pi) \cdots (\pi^{q^i} - \pi)$. Then from theorem 
\ref{kernel}, we have $\vartheta_1:N^1 \map \hG$ given by 
\begin{equation}
\label{Carlitz1}
\vartheta_1(x') = \frac{1}{\pi}\sum_{i=0}^\infty \frac{(-1)^i}{L_i} (\pi {x'})^{q^i}.
\end{equation}
Hence we have 
\begin{equation}
\Theta_1(x,x') = \frac{1}{\pi} \sum_{i=0}^\infty \frac{(-1)^i}{L_i}
(\pi(x'-x))^{q^i} = \frac{1}{\pi}\log_C\big(\pi(x'-x)\big),
\end{equation}
where $\log_C$ denotes the Carlitz logarithm, as in~\cite{Goss}, p.\ 57.
One can check that this is the exact analogue of
Buium's character $\frac{1}{p}\log(1+p\frac{x'}{x^p})$
for $\hat{\mathbb{G}}_{\mathrm{m}}$  in the mixed-characteristic setting.

\subsection{Splitting of $J^n(E)$}
The exact sequence~(\ref{eq-humdrum}) is split by the Teichm\"uller section 
$\teich:E\to J^nE$, as defined in section~\ref{sec-witt-vectors}.
We emphasize that $\teich$ is only a morphism of $\bF_\xqa$-module schemes and is not a morphism
of $A$-module schemes. Nevertheless, it induces an isomorphism
	$$
	J^n(E) \longisomap E\times N^n
	$$
of $\bF_\xqa$-module schemes.
Therefore for any character $\Theta \in \bX_n(E)$, we can write $\Theta=g_{\Theta}\oplus\Psi_{\Theta}$ or
\begin{equation}
	\label{eq-char-split}
	\Theta(x_0,\dots ,x_n) = g_{\Theta}(x_0)+ \Psi_{\Theta}(x_1,\dots ,x_n),
\end{equation}
where $\Psi_{\Theta} = i^*\Theta \in \HomA(N^n,\hG)$ and $g_{\Theta}=\teich^*\Theta$. 
We call $g_\Theta$ the \emph{Teichm\"uller component} of $\Theta$.
Note that because $\teich$ is only $\bF_\xqa$-linear, $g_\Theta$ is also only $\bF_\xqa$-linear.
It still can, however, be expressed as an additive restricted power series. 
On the other hand, the restriction $\Psi_\Theta$ of $\Theta$ to $N^n$ does remain $A$-linear.

Now consider the morphism
	\begin{equation}
		(\phi\circ i - i\circ \mfrak{f}): N^{n+1} \longlabelmap{} J^nE,
	\end{equation}
in the notation of (\ref{trapezium}). It is an $A$-linear morphism by proposition~\ref{latfrob}.

\begin{proposition}
\label{comm}
	There exists a morphism $h$ (necessarily unique and $A$-linear) making the diagram
	$$
	\xymatrix{
		N^{n+1} \ar_-u[d]\ar^{{\phi\circ i - i\circ \mfrak{f}}}[rr]  & & J^n E \\
		N^1 \ar@{-->}_{\xg}[urr]  
	}
	$$
	commute.
In coordinates, it has the form 
	$$
	\xg(x_1)=(\pi x_1, c_1 x_1^\xqb, c_2 x_1^{\xqb^2},\dots),
	$$
for some $c_j\in R$.
\end{proposition}

\begin{proof}
By (\ref{rem})--(\ref{trapezium}), the first statement is equivalent to showing that 
for any $R$-algebra $B$, there exists a map $\xg:B \map W_n(B)$ such that
	$$
	\xymatrix{
		W_n(B) \ar[d] \ar[r]^{FV-VF} & W_n(B)\\
		B \ar@{-->}[ur]_\xg 
	}
	$$
commutes, where the vertical map is the projection onto the zeroth component.
Now for any $y \in W_{n-1}(B)$, we have
	$$
	(FV-VF)(Vy) = FVVy - VFV y  = \pi Vy - V(\pi y) =0.
	$$
So such a function $\xg$ exists.

To conclude that $\xg(x)$ is of the given form, we use a homogeneity argument. Let $(z_0,z_1,\dots)$
denote the ghost components of $(x_0,x_1,\dots)$. If interpret each $x_j$ as an indeterminate of
degree $\xqb^j$, then each $z_j$ is a homogenous polynomial in the $x_0,\dots,x_j$ of degree $\xqb^j$ and with
coefficients in $A$: $z_1=x_0^\xqb+\pi x_1$, and so on. Solving for $x_j$ in terms of $z_0,\dots,z_j$,
we see that $x_j$ is a homogenous polynomial in the $z_0,\dots,z_j$ with coefficients in $A[1/\pi]$.

Now let $(y_0,y_1,\dots)$ denote $(FV-VF)(x_0,x_1,\dots)$, where $y_j\in R[x_0,\dots,x_j]$.
Then the ghost components of $(y_0,y_1,\dots)$ are $(\pi z_0,0,0,\dots)=(\pi x_0,0,0,\dots)$.
It follows that $y_0=\pi x_0$. Further, by the above, $y_j$ is an
element of $R[x_0,\dots,x_j]$ but also a homogeneous polynomial in $\pi x_0$ of degree 
$\xqb^j$ and with coefficients in $A[1/\pi]$. Therefore it
is of the form $c_j x_0^{\xqb^j}$ for some $c_j\in R$.
\end{proof}

%
%
%

\begin{proposition}
\label{diff}
Let $\Theta$ be a character in $\bX_n(E)$. 
\begin{enumerate}
	\item We have
		$$
		i^*\phi^*\Theta = \mfrak{f}^*(i^*\Theta)+ \gamma \Psi_1,
		$$
		where $\gamma=\pi g'_\Theta(0)$ and  $g'_\Theta(x_0)$ denotes
		the usual derivative of the polynomial $g_\Theta(x_0)\in R[x_0]$
		of equation~(\ref{eq-char-split}).
	\item For $n\geq 1$, we have
		$$
		i^*(\phi^{\circ n})^*\Theta= (\mfrak{f}^{n-1})^* i^*\phi^*\Theta.
		$$
\end{enumerate}
\end{proposition}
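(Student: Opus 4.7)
The plan for part~(1) is to combine the Teichm\"uller decomposition $\Theta = g\oplus\Psi$ of~(\ref{eq-char-split}) with an explicit computation of the difference $\epsilon := \phi\circ i - i\circ \mfrak{f}: N^{n+1}\to J^nE$. This $\epsilon$ is $A$-linear, being a difference of the $A$-linear morphisms $\phi\circ i$ and $i\circ\mfrak{f}$ (the latter using proposition~\ref{latfrob}). Under the identifications $J^kE \cong W_k$ and $N^k \cong V(W_{k-1})$, the morphism $\phi\circ i$ corresponds to $FV: W_n\to W_n$ and $i\circ\mfrak{f}$ to $VF: W_n\to W_n$, so $\epsilon$ corresponds to $FV-VF$. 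Applying lemma~\ref{comm} then yields the explicit formula $\epsilon(x_1,\dots,x_{n+1}) = (\pi x_1, c_1 x_1^{\xqb}, \dots, c_n x_1^{\xqb^n})$ in the Witt coordinates on $J^nE$. In particular, $\epsilon$ factors through the truncation $u^n: N^{n+1}\to N^1$ as an $A$-linear map $\bar\epsilon: N^1 \to J^nE$; since $\HomA(N^1,\hG) = R\cdot\vartheta_0$ by theorem~\ref{kernel} (applied at $n=0$), one concludes $\Theta\circ\epsilon = \alpha\Psi_1$ for a unique $\alpha\in R$.

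To pin down $\alpha$, compare the coefficient of $x_1$ on both sides. On the right, since $\Psi_1 = \vartheta_0\circ u^n$ and $\vartheta_0(x) = x + O(x^{\xqa})$, we have $\alpha\Psi_1 = \alpha x_1 + O(x_1^{\xqa})$. On the left, applying $\Theta = g\oplus\Psi$ gives $\Theta\circ\epsilon(x_1,\dots,x_{n+1}) = g(\pi x_1) + \Psi(c_1 x_1^{\xqb},\dots,c_n x_1^{\xqb^n})$; the first summand contributes $\pi g'(0)\,x_1 = \gamma x_1$ linearly (its higher-order terms lie in $x_1^{\xqa}, x_1^{\xqa^2},\dots$), while the second, being an $\bF_\xqa$-linear combination of $x_1^{\xqb}, x_1^{\xqb^2},\dots$, contributes nothing linearly in $x_1$. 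Matching coefficients gives $\alpha = \gamma$, which proves~(1).

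For part~(2), the plan is to iterate~(\ref{iphi}) by induction on $n\geq 1$, establishing $\phi^{\circ n}\circ i = \phi\circ i\circ\mfrak{f}^{\circ (n-1)}$. The case $n=1$ is tautological; for $n\geq 2$ the inductive hypothesis combined with~(\ref{iphi}) gives $\phi^{\circ n}\circ i = \phi\circ\phi^{\circ(n-1)}\circ i = \phi\circ\phi\circ i\circ\mfrak{f}^{\circ(n-2)} = \phi\circ i\circ\mfrak{f}\circ\mfrak{f}^{\circ(n-2)} = \phi\circ i\circ\mfrak{f}^{\circ(n-1)}$. Pulling $\Theta$ back along this identity yields the desired equality. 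The main obstacle lies in part~(1): namely, extracting the explicit form of $\epsilon$ from lemma~\ref{comm} and carefully translating between the jet-space morphisms $\phi\circ i$, $i\circ\mfrak{f}$ and the Witt-vector operators $FV$, $VF$. The subtlety that $\phi$ lies over the Frobenius endomorphism of $\Spf R$ rather than over $\Spf R$ itself is absorbed by the relative-Frobenius twist already set up in the paper and does not affect the additive identities at stake.
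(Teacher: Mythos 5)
Your proposal is correct and follows essentially the same route as the paper: both identify $\phi\circ i-i\circ\mfrak{f}$ with $FV-VF$ and invoke lemma~\ref{comm} to see it depends only on $x_1$, then apply the decomposition~(\ref{eq-char-split}) and match linear coefficients (using that $\Psi_1$ has linear coefficient $1$ and that $\Psi$ and the higher terms of $g$ contribute only $\xqa$-power exponents) to get $\gamma=\pi g'(0)$, while part~(2) is the same induction on~(\ref{iphi}). The only differences are cosmetic: you make explicit the factorization through $N^1$ and the rank-one fact $\HomA(N^1,\hG)=R\cdot\vartheta$ (your $\vartheta_0$ is the paper's $\vartheta_1$ restricted to $N^1$), which the paper leaves implicit in "depends only on $x_1$, hence of the form $\gamma\Psi_1$."
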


\begin{proof}
(1): By proposition~\ref{comm}, we have 
	$$
	(\phi\circ i - i\circ \mfrak{f})(x_1,\dots,x_{n+1}) = (\pi x_1,c_1 x_1^\xqb, c_2 x_1^{\xqb^2},\dots),
	$$ 
where $c_j\in R$. Therefore we have
\begin{equation}
	\label{eq-hiho}
	\begin{aligned}
	((i^*\phi^* - \mfrak{f}^*i^*)\Theta)(x_1,\dots,x_{n+1}) &= \Theta(\pi x_1, c_1 x_1^{\xqb},\dots) \\
	&=  g_\Theta(\pi x_1) + \Psi_\Theta(c_1 x_1^\xqb,\dots).
	\end{aligned}	
\end{equation}
In particular, the character $(i^*\phi^* - \mfrak{f}^*i^*)\Theta$ depends only on $x_1$.
Therefore it is of the form $\gamma \Psi_1$, for some $\gamma\in R$. Further since by theorem~\ref{kernel} we 
have $\Psi'_1(0)=1$, the coefficient $\gamma$ is simply the linear coefficient
of $(i^*\phi^* - \mfrak{f}^*i^*)\Theta$, which by (\ref{eq-hiho}) is $\pi g'_\Theta(0)$.

(2): This is another way of expressing $\phi^{\circ n}\circ i = \phi\circ i \circ \mfrak{f}^{\circ(n-1)}$,
which follows from (\ref{iphi}) by induction.
\end{proof}

\subsection{Frobenius and the filtration by order}
We would like to fix a notational convention here. Let $u:J^nE \map J^{n'}E$
denote the canonical projection map for any $n' < n$, given in Witt coordinates by 
$u(x_0,\dots,x_n)= (x_0,\dots,x_{n'})$.

Consider the following morphism of exact sequences of $A$-modules
	$$
	\xymatrix{
	0 \ar[r] & N^n \ar@{->>}[d]_u \ar[r]^i & J^nE \ar@{->>}[d]_u \ar[r]^{u} & E \ar@{=}[d]
	\ar[r] & 0 \\
	0 \ar[r] & N^{n-1} \ar[r]^i & J^{n-1}E \ar[r]^{u} & E \ar[r] & 0. \\
	}
	$$
Since $\bX_0(E) = \{0\}$ by theorem \ref{order0}, applying 
$\HomA(-,\hG)$ to the above, we obtain the following morphism of exact sequences of $R$-modules
	$$
	\xymatrix{
	\label{longseq}
	0 \ar[r]& \bX_n(E) \ar[r]^-{i^*}& \HomA(N^n,\hG) \ar[r]^{\partial}&
	\mb{Ext}_A(E,\hG) \\
	0 \ar[r]& \bX_{n-1}(E) \ar@{^{(}->}[u]^{u^*} \ar[r]^-{i^*}& \HomA(N^{n-1},\hG) 
	\ar@{^{(}->}[u]^{u^*} \ar[r]^{\partial}& \mb{Ext}_A(E,\hG). \ar@{=}[u] \\
	}
	$$

\begin{proposition}\label{pro-filt}
	For any $n\geq 0$, the diagram
	$$
	\xymatrix{
	\bX_n(E)/\bX_{n-1}(E) \ar@{^{(}->}^{\phi^*}[r]\ar@{^{(}->}^{i^*}[d] 
		& \bX_{n+1}(E)/\bX_n(E) \ar@{^{(}->}^{i^*}[d] \\
	\HomA(N^n,\hG)/\HomA(N^{n-1},\hG) \ar^{\mfrak{f}^*}_{\sim}[r] 
		& \HomA(N^{n+1},\hG)/\HomA(N^n,\hG)
	}
	$$
	is commutative. The morphisms $i^*$ and $\phi^*$ are injective, and $\mfrak{f}^*$ is bijective.
\end{proposition}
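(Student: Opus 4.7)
The plan is to prove the proposition in four stages: first establish commutativity of the square; then bijectivity of the bottom arrow $\mfrak{f}^*$; and finally use these to deduce injectivity of $i^*$ followed by that of $\phi^*$ via diagram chasing.

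For commutativity, I would invoke proposition \ref{diff}(1), which gives for each $\Theta\in\bX_n(E)$ the explicit identity $i^*\phi^*\Theta=\mfrak{f}^*i^*\Theta+\gamma\Psi_1$ with $\gamma\in R$. The character $\Psi_1$, viewed in $\HomA(N^{n+1},\hG)$, is the $u^*$-pullback of $\Psi_1\in\HomA(N^n,\hG)$, because the projection $N^{n+1}\to N^1$ factors through $u:N^{n+1}\to N^n$. Hence the correction term $\gamma\Psi_1$ lies in $u^*\HomA(N^n,\hG)$ and vanishes in the bottom-right quotient, giving commutativity.

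Next, for bijectivity of $\mfrak{f}^*$ on the quotients: by proposition \ref{linind}, $\HomA(N^n,\hG)$ is free on the basis $\Psi_1,\dots,\Psi_n$, and the defining relation $\Psi_j=\vartheta_1\circ\mfrak{f}^{\circ(j-1)}$ immediately gives $\mfrak{f}^*\Psi_j=\Psi_{j+1}$. Thus the rank-one quotient $\HomA(N^n,\hG)/\HomA(N^{n-1},\hG)$ is generated by $\bar\Psi_n$, its target by $\bar\Psi_{n+1}$, and $\mfrak{f}^*$ sends the first to the second. Since $\mfrak{f}^*$ is $\phi$-semilinear by the same mechanism that makes $\phi^*$ $\phi$-semilinear on $\bX_\infty(E)$, the induced map on quotients is $r\bar\Psi_n\mapsto\phi(r)\bar\Psi_{n+1}$, which is bijective in our main case where $R$ is the completion of the maximal unramified extension of $\hA$, so that $\phi$ is an automorphism.

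Finally, I would deduce the two injectivity statements by chasing the long exact sequence diagram just preceding the proposition. For $i^*$: if $\Theta\in\bX_n(E)$ satisfies $i^*\Theta=u^*\Psi'$ for some $\Psi'\in\HomA(N^{n-1},\hG)$, then $\partial\Psi'=\partial u^*\Psi'=\partial i^*\Theta=0$, whence $\Psi'=i^*\Theta'$ for some $\Theta'\in\bX_{n-1}(E)$ by exactness of the bottom row; injectivity of $i^*$ on the full $\bX_n(E)$ (which holds because $\bX_0(E)=\{0\}$ by theorem \ref{order0}) then forces $\Theta=u^*\Theta'$, so $\bar\Theta=0$. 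For $\phi^*$: assuming $\phi^*\Theta\in u^*\bX_n(E)$, the already-verified commutativity combined with the injectivity of $\mfrak{f}^*$ on quotients forces $i^*\Theta$ to be trivial in the bottom-left quotient, and then the $i^*$-injectivity just established forces $\bar\Theta=0$ in the top-left quotient. The main subtlety lies in the bijectivity step, in making precise the $\phi$-semilinearity of $\mfrak{f}^*$ and the implicit use of $\phi$ being an automorphism of $R$; everything else is a formal diagram chase.
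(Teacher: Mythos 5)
Your proof is correct and follows essentially the same route as the paper: commutativity from proposition~\ref{diff} (the correction term $\gamma\Psi_1$ dying in the quotient, with the $n=0$ case trivial by theorem~\ref{order0}), bijectivity of $\mfrak{f}^*$ from the basis of proposition~\ref{linind} together with $\mfrak{f}^*\Psi_j=\Psi_{j+1}$, and the two injectivity statements by a formal chase. The only cosmetic differences are that you prove injectivity of $i^*$ on the graded pieces via the connecting-map diagram rather than the paper's one-line observation that $J^nE\to J^{n-1}E$ and $N^n\to N^{n-1}$ have the same kernel, and that you are more explicit than the paper about the $\phi$-semilinearity of $\mfrak{f}^*$ and the resulting reliance on $\phi$ being an automorphism of $R$ for literal bijectivity.
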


In fact, we will show in corollary~\ref{cor:square-iso}
that all the morphisms in the diagram of proposition~\ref{pro-filt} are isomorphisms.

\begin{proof}
For $n\geq 1$, commutativity of the diagram follows from proposition~\ref{diff}; for $n=0$, it follows from
theorem~\ref{order0}. 

The maps $i^*$ are injective because the projections $J^nE \to J^{n-1}E$ and $N^n\to N^{n-1}$ have the same 
kernel, and $\mfrak{f}^*$ is an isomorphism by proposition~\ref{linind}. It follows that  $\phi^*$
is an injection. 
\end{proof}

\subsection{The character $\Theta_m$}
\label{sec:Theta-m}
Recall the exact sequence~(\ref{main-ex-seq})
	\begin{equation*}
		0\map \bX_n(E) \stk{i^*}{\map} \HomA(N^n,\hG) \stk{\partial}{\map} \Ext_A(E,\hG).
	\end{equation*}
Let $m$ denote the splitting order of $E$.
Then for all $n < m$, the map 
	$$
	\partial:\HomA(N^n,\hG) \map \Ext_A(E,\hG)
	$$ 
is injective
since $\bX_n(E)=\{0\}$. But at $n= m$, we have $\bX_m(E)\ne \{0\}$, and so there is a nonzero character 
$\Psi \in\HomA(N^m,\hG)$ in the kernel of $\partial$. Write $\Psi$ in terms of the basis of
canonical characters
$\Psi_i$ defined in~(\ref{def-Psi-n}):
	$$
	\Psi= \tilde{\lam}_m\Psi_m-\tilde{\lam}_{m-1}\Psi_{m-1} -\cdots- \tilde{\lam}_1\Psi_1,
	$$
where $\tilde{\lam}_i \in R$ for all $i=0,\dots,m-1$.
Then we necessarily have $\tilde{\lam}_m \ne 0$ since $\bX_{m-1}=\{0\}$. 
Therefore we have
	\begin{equation}
		\label{imagepsi2} 
		\partial{\Psi_m} = \lam_{m-1}\partial\Psi_{m-1}+\cdots+ 
\lam_1\partial\Psi_1 \in \Ext_A(E,\hG)_K
	\end{equation}
where $\lam_i= \tilde{\lam}_i/\tilde{\lam}_m$ for all $i= 1,\dots,m-1$. 
This implies that the character
$$\Psi_m-\lam_{1}\Psi_{m-1}-\cdots-\lam_{m-1}\Psi_1$$ is in $\ker(\partial)$ and hence by the 
main exact sequence (\ref{main-ex-seq}), 
there exists a unique $\Theta_m \in \bX_m(E)_K$ such that 
	\begin{equation}
		\label{eq-theta-def}
		i^*\Theta_m = \Psi_m -\lam_{m-1} \Psi_{m-1}-\cdots-\lam_{1}\Psi_1.
	\end{equation}
It then follows immediately that $\Theta_m$ is a $K$-linear basis for $\bX_m(E)_K$, say by
propositions~\ref{linind} and~\ref{pro-filt}.
(We will show in corollary~\ref{cor:square-iso} 
that $\Theta_m$ actually lies in the group $\bX_m(E)$ of integral characters, and is in fact an integral
basis for it.)

\begin{proposition}
\label{kerb}
Let $m$ denote the splitting order of $E$. Then
for any $j\geq 0$, the character $i^*(\phi^*)^j\Theta_m$ agrees with $\Psi_{m+j}$ modulo 
rational characters of lower order, and the elements $\Theta_m, \phi^*\Theta_m,\cdots, {\phi^{n-m}}^*\Theta_m$ 
are a basis of the $K$-vector space $\bX_n(E)_K$. 
\end{proposition}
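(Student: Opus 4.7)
The plan has two main parts: first establish the modular formula $i^*(\phi^*)^j\Theta_m \equiv \Psi_{m+j}$ by induction on $j$, then use it for the basis claim via a filtration-and-dimension argument.

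For the congruence, I will induct on $j$. The base case $j=0$ is immediate from the defining identity~(\ref{eq-theta-def}) for $\Theta_m$, namely $i^*\Theta_m = \Psi_m - \lam_{m-1}\Psi_{m-1} - \cdots - \lam_1\Psi_1$, since each $\lam_i\Psi_i$ with $i\leq m-1$ lies in $\HomA(N^{m-1},\hG)_K$. For the inductive step, assume $i^*(\phi^*)^j\Theta_m = \Psi_{m+j} + \eta$ with $\eta \in \HomA(N^{m+j-1},\hG)_K$. Applying proposition~\ref{diff}(1) to $(\phi^*)^j\Theta_m$ gives
\[
i^*(\phi^*)^{j+1}\Theta_m \;=\; \mfrak{f}^*\bigl(i^*(\phi^*)^j\Theta_m\bigr) + \gamma_j \Psi_1 \;=\; \mfrak{f}^*\Psi_{m+j} + \mfrak{f}^*\eta + \gamma_j \Psi_1.
\]
By the definition $\Psi_i = \vartheta_1 \circ \mfrak{f}^{\circ(i-1)}$, we have $\mfrak{f}^*\Psi_i = \Psi_{i+1}$, so $\mfrak{f}^*\Psi_{m+j} = \Psi_{m+j+1}$. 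Moreover, $\mfrak{f}^*\eta \in \HomA(N^{m+j},\hG)_K$ because pullback by $\mfrak{f}:N^{m+j+1}\to N^{m+j}$ raises the order index by one on the subgroups $\HomA(N^{\bullet},\hG)_K$, and $\gamma_j\Psi_1 \in \HomA(N^1,\hG)_K \subseteq \HomA(N^{m+j},\hG)_K$. Both error terms thus lie in $\HomA(N^{m+j},\hG)_K$, proving the congruence.

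For the basis statement, I use the filtration of $\bX_n(E)_K$ by order. From the main exact sequence~(\ref{main-ex-seq}), $i^*$ induces an injection of the successive quotients
\[
\bX_k(E)_K/\bX_{k-1}(E)_K \;\hookrightarrow\; \HomA(N^k,\hG)_K / \HomA(N^{k-1},\hG)_K,
\]
and the target is $1$-dimensional by proposition~\ref{linind}. Hence each quotient on the left has dimension at most one. For $k < m$ the quotient vanishes by definition of splitting order, so $\bX_k(E)_K = 0$ for $k \leq m-1$. For $k \geq m$, the first part of the proposition shows that the class of $(\phi^*)^{k-m}\Theta_m$ in $\bX_k(E)_K/\bX_{k-1}(E)_K$ maps under $i^*$ to the class of $\Psi_k$, which is nonzero in the Hom quotient; so the quotient is exactly $1$-dimensional and $(\phi^*)^{k-m}\Theta_m$ represents it.

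Summing the dimensions of the graded pieces yields $\dim_K \bX_n(E)_K = n - m + 1$. The elements $\Theta_m, \phi^*\Theta_m, \ldots, (\phi^{n-m})^*\Theta_m$ are linearly independent because they each have a nonzero image in a distinct graded piece of the filtration, and their count matches the dimension; hence they form a $K$-basis of $\bX_n(E)_K$. The only real bookkeeping issue, which I expect to be the main (minor) obstacle, is verifying that the $\gamma\Psi_1$ error from proposition~\ref{diff} does not interfere with the inductive step. This is handled cleanly because $\Psi_1$ sits at the bottom of the filtration on $\HomA(N^{\bullet},\hG)$, hence always lives well below the level modulo which we are working.
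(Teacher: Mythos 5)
Your proposal is correct and follows essentially the same route as the paper: the paper's proof rests on proposition~\ref{pro-filt} (whose content---the commutation $i^*\phi^*=\mfrak{f}^*i^*$ modulo $\gamma\Psi_1$ from proposition~\ref{diff}, the injectivity of $i^*$ on graded pieces, and $\mfrak{f}^*\Psi_i=\Psi_{i+1}$) you simply re-derive inline, together with the same rank-one bound on the graded pieces from proposition~\ref{linind} and the same dimension count $n-m+1$. Your explicit induction on $j$ just spells out what the paper leaves implicit, so there is nothing substantively different to flag.
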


\begin{proof}
By~\ref{pro-filt}, each character ${\phi^i}^*\Theta_m$ lies in $\bX_{m+i}(E)$ but not in $\bX_{m+i-1}(E)$.
Therefore they are linearly independent. In particular, the rank of $\bX_n(E)$ is at least $n-m+1$.

At the same time, by proposition~\ref{pro-filt}, each 
$\bX_{m+i}(E)/\bX_{m+i-1}(E)$ has rank at most $1$. Thus the rank of $\bX_n(E)$ actually equals $n-m+1$,
and so the elements in question form a $K$-basis of $\bX_n(E)_K$.
\end{proof}

Do note that this result will be improved to an integral version in theorem 
\ref{phigen-body}.

\section{Ext Groups and de Rham cohomology}
\label{sec-deRham}
We will prove theorem \ref{phigen-intro} in this section. We continue with the notation
from the previous section. In particular, $R$ is a discrete valuation ring.

We will briefly describe 
our strategy in the next few lines. Recall from (\ref{eq-theta-def}) the equality
	\begin{equation*}
		i^*\Theta_m = \Psi_m -\lam_{m-1} \Psi_{m-1}-\cdots-\lam_{1}\Psi_1
	\end{equation*}
where $\lam_j\in K$. \emph{A priori}, the elements $\lam_j$ need not belong
to $R$, but we prove in theorem \ref{intlam} that they actually do.
This implies that $i^*\Theta_m$ lies in $\HomA(N^m,\hG)$ and $\ker(\partial)$,
and hence by the exact sequence (\ref{main-ex-seq}), we have
$\Theta_m\in\bX_m(E)$---that is, the character $\Theta_m$ is integral. 
From there, it is an easy consequence that
$\bX_n(E)$ is generated by $\Theta_m,\dots, \Theta_m^{\phi^{n-m}}$ as an $R$-module.

To prove theorem \ref{intlam}, which says that all $\lambda_j$ belong to $R$, requires some preparation.
For all $n \geq 1$, we will define maps from $\HomA(N^n,\hG)$ to $\Ext^\sharp(E,\hG)$ which is also
interpreted as the de Rham cohomology from associated to the Drinfeld module $E$. These maps are obtained by
push-outs of $J^nE$ by $\Psi \in \HomA(N^n,\hG)$. 
To give an idea, do note that, for every $n \geq 1$, there
are canonical elements $E^*_\Psi \in \Ext_A(E,\hG)$ 
where the $E^*_\Psi$ is a push-out of $J^nE$ by $\Psi$ as follows
	$$
	\xymatrix{
	0 \ar[r] & N^n \ar[d]_-\Psi \ar[r]^i & J^nE \ar[d]_-{\eg_\Psi} \ar[r]^u \ar[r] & E \ar@{=}[d] \ar[r] & 0 \\
	0 \ar[r] & \hG \ar[r] & E^*_\Psi \ar[r] & E \ar[r] & 0 \\  
	}
	$$
as $E^*_\Psi \in \Ext_A(E,\hG)$. It 
leads to a very interesting theory of $\d$-modular forms over the moduli 
space of Drinfeld modules and will be studied in a subsequent paper. And similar
to previous cases, the main principles carry over to the case of elliptic 
curves or abelian schemes as well.

Now we introduce the theory of extensions of $A$-module group schemes.
Given an extension $\eta_C \in \Ext(G,T)$ and $f:T \map T'$ where $G$, $T$ and 
$T'$ are $A$-modules and $f$ is an $A$-linear map we have the following 
diagram of the push-forward extension $f_*C$.
\oldmarginpar{recall cocycles etc---homological}
	$$
	\xymatrix{
	0 \ar[r]& T \ar[r] \ar[d]_-f & C \ar[r] \ar[d] & G \ar[r] \ar@{=}[d] & 0 \\
	0 \ar[r] & T' \ar[r] & f_*C  \ar[r] & G \ar[r] & 0
	}
	$$
The class of $f_*C$ is obtained as follows---the class of $\eta_C$ is represented
by a linear (not necessarily $A$-linear) function $\eta_C:G \map T$. Then 
$\eta_{f_*C}$ is represented by the class $\eta_{f_*C}= [f \circ \eta_C] \in 
\Ext(E,T')$. In terms of the action of $t \in A$, $\vp_C(t)$ is given by
$\left(\begin{array}{ll}\vp_G(t) & 0\\ \eta_C & \vp_T(t) \end{array}\right)$ 
where $\eta_C:G \map T$. Then $\vp_{f_*C}(t)$ is given by
	\begin{align}
	\left(\begin{array}{ll}\vp_G(t) & 0\\ f(\eta_C) & \vp_{T'}(t) \end{array}\right).
	\end{align}

Now consider the exact sequence 
	\begin{equation}
		\label{jet-seq}
	0 \map N^n \stk{i}{\map} J^nE \stk{u}{\map} E \map 0	
	\end{equation}
Given a $\Psi \in\HomA(N^n,\hG)$ consider the push out
	$$
	\xymatrix{
	0 \ar[r] & N^n \ar[d]_-\Psi \ar[r]^i & J^nE \ar[d]_-{\eg_\Psi} \ar[r]^u \ar[r] 
	& E \ar@{=}[d] \ar[r] & 0 \\
	0 \ar[r] & \hG \ar[r]^i & E^*_\Psi \ar[r] & E \ar[r] & 0 \\  
	}
	$$
where $E^*_\Psi = \frac{J^nE \times \hG}{\Gamma(N^n)}$ and
$\Gamma(N^n) = \{(i(z),-\Psi(z)) | ~ z \in N^n\} \subset J^nE \times \hG$
and $\eg_\Psi(x)= [x,0] \in E^*_\Psi$.

The Teichm\"uller section $\teich:E\to J^n(E)$ is an $\bF_\xqa$-linear splitting of the sequence 
(\ref{jet-seq}). The induced retraction 
	$$
	\ret=\mathbbm{1}-\teich\circ u:J^n(E)\to N^n
	$$
is given in coordinates simply
by $\ret:(x_0,\dots,x_n)\mapsto (x_1,\dots,x_n)$.
Let us denote by $\switt$ the morphism  on Lie algebras induced by $\ret$. Thus we have
the following split exact sequence of $R$-modules
	$$
	\xymatrix{
	0 \ar[r] & \Lie N^n \ar[r]^-{Di} & \Lie J^nE \ar@/^/[l]^{{\switt}}
	\ar[r]^-{Du} & \Lie(E) \ar[r] & 0.
	}
	$$
\oldmarginpar{which coords?}
Let $s_\Psi$ denote the induced splitting of the push out extension
\oldmarginpar{rmk about tangent coords?}
	$$
	\xymatrix{
	0 \ar[r] & \Lie \hG \ar[r] & \Lie(E^*_\Psi) \ar@/^/[l]^{s_\Psi}
	\ar[r] & \Lie(E) \ar[r] & 0.
	}
	$$
It is given explicitly by $\tilde{s}_\Psi : \Lie J^nE \times \Lie\hG \map \Lie \hG$
	$$
	\tilde{s}_\Psi(x,y):= D\Psi({\switt}(x)) + y
	$$
and
	$$
	s_\Psi:\Lie(E^*_\Psi) = \frac{\Lie  J^nE \times \Lie  \hG}{\Lie \Gamma(N^n)} \map \Lie \hG.
	$$

Recall that $\Ext^\sharp(E,\hG)$ consists of an extension of $A$-module schemes together 
with a splitting $s$ of the corresponding extension of Lie algebras. (See~(\ref{seq-Hodge}) above,
or section 5 in~\cite{Ge3}.)
Therefore we have the following morphism of exact sequences
	\begin{equation}
	\label{trivext}
	\xymatrix{
		0 \ar[r] & \bX_n(E) \ar[r] \ar[d] & 
		\HomA(N^n,\hG) \ar[r] \ar[d]_{\Psi\mapsto (E_\Psi^*,s_\Psi)}& 
		\Ext(E,\hG)  \ar@{=}[d]& \\
		0 \ar[r] &\Lie(E)^* \ar[r] & \Ext^\sharp(E,\hG) \ar[r] & \Ext(E,\hG)
		 \ar[r] & 0.
	}
	\end{equation}

\begin{proposition}\label{pro:formulas}
Let $\Theta$ be a character in $\bX_n(E)$,
and put $\tilde{\Theta}=\phi^*\Theta$.
\begin{enumerate}
	\item The map $\bX_n(E)\to \Lie(E)^*$ of (\ref{trivext}) sends $\Theta$ to $-Dg_\Theta$.
	\item 
	We have $g_{\tilde{\Theta}}(x)=g_\Theta(x^\xqb)$ and
	$$\Psi_{\tilde{\Theta}}(y) = \Psi_\Theta(\ret(\phi(i(y)))) + g_\Theta(\pi y_1).$$ 
\end{enumerate}
\end{proposition}

\begin{proof}
(1):
Let us recall in explicit terms how the map is given. For the split extension $E\times\hG$,
the retractions $\Lie(E) \times\Lie \hG = \Lie(E\times\hG) \to \Lie \hG$ are in bijection with maps 
$\Lie(E)\to\Lie\hG$, a retraction $s$ corresponding to map $x\mapsto s(x,0)$. Therefore to determine
the image of $\Theta$, we need to identify $E^*_{\Psi_\Theta}$ with a split extension and then apply this map
to $s_{\Psi_\Theta}$.

A trivialization of the extension $E^*_{\Psi_\Theta}$ is given by the map
	$$
	\frac{J^nE \times \hG}{\Gamma(N^n)} = E^*_{\Psi_\Theta} \longisomap E\times\hG
	$$
defined by $[a,b]\mapsto (u(a),\Theta(a)+b)$. The inverse isomorphism $H$ is 
then given by the expression
	$$
	H(x,y) = [v(x),y-\Theta(v(x))],
	$$
and so the composition $E\to E\times\hG \to E^*_{\Psi_\Theta} \to \hG$
is simply $-\Theta\circ v=-g_\Theta$,
which induces the map $-Dg_\Theta$ on the Lie algebras.

(2): We have 
\begin{align*}
\tilde{\Theta}(x) &= \Theta(\phi(x)) \\
&= \Psi_\Theta({\ret}(\phi(t))) + g_\Theta(x_0^\xqb + \pi x_1) \\
&= \big({\Psi_\Theta}({\ret}(\phi(t)) + {g_\Theta}(\pi x_1)\big) + 
{g_\Theta}(x_0^\xqb).	
\end{align*}
In other words, we have $\tilde{\Psi} ({\ret}(x)) = {\Psi_\Theta}({\ret}(\phi(x)) + {g_\Theta}(\pi x_1)$
and $\tilde{g}(x_0) = {g_\Theta}(x_0^\xqb)$. Setting $x=i(y)$, we obtain the desired result.
\end{proof}

\begin{proposition}
\label{dual}
If $\Psi \in i^*\phi^*(\bX_n(E))$, then the class $(E^*_\Psi,s_\Psi)\in \Ext^\sharp(E,\hG)$ is zero.
\end{proposition}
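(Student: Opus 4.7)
The plan hinges on two facts already established above: the commutativity of diagram~(\ref{trivext}) and the explicit formulas in Proposition~\ref{pro:formulas}. First I would write $\Psi = i^*\tilde\Theta$, where $\tilde\Theta := \phi^*\Theta \in \bX_{n+1}(E)$ for some $\Theta \in \bX_n(E)$. Applying diagram~(\ref{trivext}) at level $n+1$, the fact that $\Psi$ is in the image of $i^*$ means that its image $(E^*_\Psi, s_\Psi) \in \Ext^\sharp(E,\hG)$ under the middle vertical arrow equals the image of $\tilde\Theta$ under the composition
$$
\bX_{n+1}(E) \longmap \Lie(E)^* \longmap \Ext^\sharp(E,\hG).
$$
So it suffices to show that $\tilde\Theta$ maps to zero in $\Lie(E)^*$.

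By Proposition~\ref{pro:formulas}(1), that image is $-D\tilde g$, where $\tilde g = \teich^*\tilde\Theta$. By Proposition~\ref{pro:formulas}(2) applied with $g = \teich^*\Theta$, we have $\tilde g(x) = g(x^\xqb)$. Since $g$ is $\bF_\xqa$-linear and hence expressible as a restricted power series $\sum_i a_i x^{\xqa^i}$ by Proposition~\ref{restricted}, the series $\tilde g(x) = \sum_i a_i x^{\xqb\xqa^i}$ has no linear term: the smallest exponent $\xqb$ already satisfies $\xqb \geq 2$, since $\xqb$ is the cardinality of the residue field $A/\fp$. Consequently $D\tilde g = 0$, as required.

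I do not anticipate any significant obstacle. Once Proposition~\ref{pro:formulas} is in hand, the Frobenius twist $g \mapsto g(x^\xqb)$ visibly annihilates the linear coefficient, and the diagram does the rest. The only mild point worth double-checking in writing up is that the derivative at the identity of an $\bF_\xqa$-linear restricted power series really is given by its linear coefficient (for instance in the chosen Witt/$\tau$-coordinates), so that the vanishing of the linear term translates directly into $D\tilde g = 0$.
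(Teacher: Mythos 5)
Your proposal is correct and follows essentially the same route as the paper: write $\Psi=i^*\tilde\Theta$ with $\tilde\Theta=\phi^*\Theta\in\bX_{n+1}(E)$, use diagram~(\ref{trivext}) together with proposition~\ref{pro:formulas}(1) to identify the class $(E^*_\Psi,s_\Psi)$ with $-D\tilde g$, and then conclude from proposition~\ref{pro:formulas}(2) that $\tilde g(x)=g(x^{\xqb})$ has vanishing derivative, so the class is zero. Your extra remark that the linear term of $g(x^{\xqb})$ vanishes because $\xqb\geq 2$ (for an $\bF_{\xqa}$-linear restricted power series) is exactly the implicit step in the paper's ``hence $D\tilde g=0$.''
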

\begin{proof}
Write $\Psi=i^*\phi^*\Theta$.
We know from diagram (\ref{trivext}) that $E^*_\Psi$ is a trivial extension
since ${\Psi}$ lies in $i^* \bX_{n+1}(E)$. Now by part (2) of proposition~\ref{pro:formulas},
we have ${g_{\phi^*\Theta}}(x_0)=g_\Theta(x_0^\xqb)$ and hence
$D{g_{\phi^*\Theta}}=0$. Therefore by part (1) of that proposition, the class in $\Ext^\sharp(E,\hG)$ is zero.
\end{proof}

\subsection{The $F$-crystal $\bH(E)$}
The $\phi$-linear map $\phi^*:\bX_{n-1}(E) \to \bX_n(E)$ induces a linear map
$\bX_{n-1}(E)'\to \bX_n(E)$, which we will abusively also denote $\phi^*$.
Here, for any $R$-module $M$, we write $M'$ for its base change $R\otimes_{\phi,R}M$ via $\phi:R\to R$.
We then define
	$$
	\bH_n(E) = \frac{\HomA(N^n,\hG)}{i^*\phi^*(\bX_{n-1}(E)')}.
	$$
Then 
$u:N^{n+1}
\map N^n$ induces $u^*:\HomA(N^n,\hG) \map \HomA(N^{n+1},\hG)$. And since
$u^*i^*\phi^*(\bX_n(E)) = i^*u^*\phi^*(\bX_n(E)) = i^*\phi^*u^*(\bX_n(E))
\subset i^*\phi^*(\bX_{n+1}(E))$, it also induces a map $u^*:\bH_n(E) \map \bH_{n+1}(E)$.
Define
	\begin{equation}
	\bH(E)= \varinjlim \bH_n(E),
	\end{equation}
where the limit is taken in the category of $R$-modules.

Similarly, $\mfrak{f}: N^{n+1} \map N^n$ induces $\mfrak{f}^*:\HomA(N^n,\hG) \map \HomA(N^{n+1},\hG)$, 
which descends to a $\phi$-linear morphism of $R$-modules
\oldmarginpar{move this para?}
	$$
	\mfrak{f}^*:\bH_n(E) \map\bH_{n+1}(E)
	$$
because we have $\mfrak{f}^*i^*\phi^*(\bX_{n-1}(E))= i^*\phi^*\phi^*(\bX_{n-1}(E) \subset i^*\phi^*\bX_{n}(E)$.
This then induces a $\phi$-linear endomorphism $\mfrak{f}^*:\bH(E) \map \bH(E)$. 

Finally, let $\bI_n(E)$ denote the image of $\partial$: 
	\begin{equation}
		\label{def-I-n}
		\bI_n(E) = \mathrm{im}\Big[\Hom(N^n,\hG) \longlabelmap{\partial} \Ext_A(E,\hG)\Big].
	\end{equation}
So $\Hom(N^n,\hG)/\bX_n(E)\simeq \bI_n(E)$. Then $u$ induces maps $u^*:\bI_n(E)\to \bI_{n+1}(E)$, and we put
	\begin{equation}
	\bI(E)= \varinjlim \bI_n(E),	
	\end{equation}
where again the limit is taken in the category of $R$-modules.

\begin{proposition}\label{pro:H-stable}
The morphism
	$$
	u^*: \bH_n(E)\otimes K \to \bH_{n+1}(E)\otimes K
	$$
is injective. For $n\geq m$, it is an isomorphism.
\oldmarginpar{$R$ a DVR}
\end{proposition}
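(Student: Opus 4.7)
My strategy is to realize $\bH_n(E)$ as the cokernel in a short exact sequence and then run a snake-lemma argument, completing the surjectivity statement with a $K$-dimension count.

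First, for each $n$ the very definition of $\bH_n(E)$ sits inside a sequence
\[
	0\longmap \bX_{n-1}(E)^\phi \stk{i^*\phi^*}{\longmap} \HomA(N^n,\hG) \longmap \bH_n(E)\longmap 0,
\]
and I would check that it is exact. Right-exactness is automatic; for left-exactness one needs $i^*\phi^*$ injective. Now $i^*$ is injective by the main exact sequence (\ref{main-ex-seq}), and $\phi^*\colon \bX_{n-1}(E)\to\bX_n(E)$ is injective by an easy induction on order using proposition~\ref{pro-filt} (which says $\phi^*$ is injective on successive quotients of the order filtration) together with $\bX_0(E)=0$ from theorem~\ref{order0}. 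I would then observe that the compatibilities $u\circ\phi=\phi\circ u$ on jet spaces and $u\circ i=i\circ u$ as maps $N^{n+1}\to J^n E$ (both direct consequences of the Witt-vector identities $TF=FT$ and $TV=VT$) make the above sequence functorial in $n$ under $u^*$, yielding a commutative ladder with exact rows.

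For the injectivity of $u^*\colon \bH_n(E)\otimes K\to \bH_{n+1}(E)\otimes K$, I would apply the snake lemma to this ladder. The leftmost vertical $u^*\colon \bX_{n-1}(E)\to \bX_n(E)$ is injective by diagram (\ref{longseq}), and the middle vertical $u^*\colon \HomA(N^n,\hG)\to \HomA(N^{n+1},\hG)$ is injective because $u\colon N^{n+1}\to N^n$ is surjective (or, concretely, because pulling back identifies the basis $\Psi_1,\dots,\Psi_n$ of proposition~\ref{linind} for $N^n$ with the initial segment of the corresponding basis for $N^{n+1}$). The snake lemma then forces $\ker\bigl(u^*\colon \bH_n(E)\to \bH_{n+1}(E)\bigr)=0$, which already holds integrally and so a fortiori after tensoring with $K$.

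For the isomorphism when $n\geq m$ I would count $K$-dimensions. By proposition~\ref{linind}, $\HomA(N^n,\hG)\otimes K$ has dimension $n$, and by proposition~\ref{kerb}, $\bX_{n-1}(E)\otimes K$ has dimension $\max(n-m,0)$. Exactness of the displayed sequence then gives
\[
	\dim_K \bigl(\bH_n(E)\otimes K\bigr)=n-\max(n-m,0)=\min(n,m).
\]
For $n\geq m$, both $\bH_n(E)\otimes K$ and $\bH_{n+1}(E)\otimes K$ therefore have $K$-dimension $m$, and an injective $K$-linear map between finite-dimensional $K$-vector spaces of the same dimension is an isomorphism. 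There is no real obstacle beyond the bookkeeping verification that $i^*\phi^*$ is injective and that the two squares in the ladder commute, both of which reduce to elementary compatibilities already built into the Witt-vector formalism.
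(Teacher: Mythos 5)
Your ladder of short exact sequences is exactly the one the paper uses, your verification of its exactness and commutativity is fine, and your dimension count (via propositions~\ref{linind} and~\ref{kerb}) correctly gives $\dim_K\bH_n(E)_K=\min(n,m)$, so the isomorphism statement for $n\geq m$ would follow once injectivity is known. The gap is in the injectivity step: the snake lemma does \emph{not} force the right-hand vertical arrow to be injective merely because the left and middle verticals are. Since the left and middle kernels vanish, what the snake lemma actually gives is an exact sequence
$$
0 \longmap \ker\big(\bH_n(E)_K\stk{u^*}{\longmap}\bH_{n+1}(E)_K\big) \longmap \frac{\bX_{n}(E)^\phi_K}{u^*\bX_{n-1}(E)^\phi_K} \longmap \frac{\HomA(N^{n+1},\hG)_K}{u^*\HomA(N^{n},\hG)_K},
$$
so the kernel you must kill is the kernel of the induced map on the cokernels of the verticals, and that map is not injective for formal reasons. (Toy example over a field $k$: map the row $0\to k\to k^2\to k\to 0$ to the row $0\to k^2\to k^2\to 0\to 0$ with the identity in the middle; the first two verticals are injective, the third is $k\to 0$.) Concretely, a nonzero class in the kernel would be a character in $\HomA(N^n,\hG)_K$ which, after pullback to $N^{n+1}$, lies in $i^*\phi^*(\bX_n(E)^\phi_K)$ but not in $i^*\phi^*(\bX_{n-1}(E)^\phi_K)$; ruling this out is the real content of the proposition, and your argument never addresses it.

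The missing step is exactly what the paper supplies. By propositions~\ref{kerb} and~\ref{linind} the two cokernels above are $K\langle({\phi^{\circ(n-m)}})^*\Theta_m\rangle^\phi$ (understood to be zero when $n<m$) and $K\langle\Psi_{n+1}\rangle$, and the induced map between them is injective by proposition~\ref{pro-filt}: $i^*\phi^*$ raises the exact order by one, i.e.\ $i^*({\phi^{\circ(n-m+1)}})^*\Theta_m$ equals $\Psi_{n+1}$ plus lower-order terms, hence has nonzero class modulo $u^*\HomA(N^n,\hG)_K$. With this inserted, your snake-lemma argument does give injectivity, and then either your dimension count or the remark that an injective map between these one-dimensional cokernels is automatically onto completes the case $n\geq m$. (Your aside that injectivity ``already holds integrally'' is also unjustified at this point in the paper---the integral structure only emerges after theorem~\ref{intlam}---but it is not needed, since tensoring with $K$ is exact.)
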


\begin{proof}
Consider the following diagram of exact sequences:
	$$
	\xymatrix{
	& 0 & 0 \\
	& 
		K\langle{\phi^{\circ (n-m)}}^*\Theta\rangle' \ar[u]\ar[r]^-{i^*\phi^*} & 
		K\langle\Psi_{n+1}\rangle \ar[u] \\
	0 \ar[r] & 
		\bX_{n}(E)'_K \ar[u] \ar^-{i^*\phi^*}[r] & 
		\HomA(N^{n+1},\hG)_K \ar[u] \ar[r]& \bH_{n+1}(E)_K \ar[r] & 
		0 \\
	0 \ar[r] & 
		\bX_{n-1}(E)'_K \ar^{u^*}[u] \ar^-{i^*\phi^*}[r] & 
		\HomA(N^{n},\hG)_K \ar^{u^*}[u] \ar[r] & \bH_n(E)_K \ar[r]\ar^{u^*}[u] & 
		0 \\
	& 
		0 \ar[u] & 
		0 \ar[u] &  & 
	}
	$$
The cokernel of each of the left two maps labelled
$u^*$ is of the displayed form by propositions~\ref{linind} and~\ref{kerb}.
If $n<m$, the expression $K\langle{\phi^{\circ (n-m)}}^*\Theta\rangle$ is understood to be zero.
The map $i^*\phi^*:K\langle{\phi^{\circ (n-m)}}^*\Theta\rangle' \to K\langle\Psi_{n+1}\rangle$ is 
injective, by proposition~\ref{pro-filt}. Therefore the map $u^*:\bH_n(E)_K\to \bH_{n+1}(E)_K$
is also injective. It is an isomorphism if $n\geq m$, because $K\langle{\phi^{\circ (n-m)}}^*\Theta\rangle$ is 
$1$-dimensional and hence the map
	$$
	i^*\phi^*:K\langle{\phi^{\circ (n-m)}}^*\Theta\rangle' \to K\langle\Psi_{n+1}\rangle
	$$ 
is an isomorphism.
\end{proof}

\begin{corollary}
\label{Mn}
We have
$$
\bH_n(E) \otimes K \simeq \left\{\begin{array}{l} 
K\langle\Psi_1,\dots, \Psi_{n}\rangle,~ \mb{if } n\leq m \\
K\langle\Psi_1,\dots, \Psi_m\rangle,~ \mb{if } n\geq m
\end{array}\right.$$
\end{corollary}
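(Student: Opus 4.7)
The plan is to deduce this corollary directly from Proposition~\ref{pro:H-stable} by handling the two ranges separately.

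First, I would treat the range $n\leq m$. By definition of the splitting order, we have $\bX_{n-1}(E)=\{0\}$ for all $n\leq m$. Thus $i^*\phi^*(\bX_{n-1}(E)^\phi)=0$, and so
\[
\bH_n(E)=\HomA(N^n,\hG).
\]
Tensoring with $K$ and invoking proposition~\ref{linind}, which shows that $\Psi_1,\dots,\Psi_n$ form an $R$-basis of $\HomA(N^n,\hG)$, yields $\bH_n(E)\otimes K\simeq K\langle\Psi_1,\dots,\Psi_n\rangle$ exactly as claimed. In particular, at $n=m$ we obtain $\bH_m(E)\otimes K\simeq K\langle\Psi_1,\dots,\Psi_m\rangle$, which is the base case for the second range.

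For the range $n\geq m$, I would iterate the second half of Proposition~\ref{pro:H-stable}, which asserts that the map $u^*:\bH_n(E)\otimes K\to\bH_{n+1}(E)\otimes K$ is an isomorphism whenever $n\geq m$. Composing these isomorphisms from $m$ up to $n$ gives a canonical identification
\[
\bH_n(E)\otimes K\;\simeq\;\bH_m(E)\otimes K\;\simeq\;K\langle\Psi_1,\dots,\Psi_m\rangle,
\]
which is the second case of the corollary.

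Since the nontrivial work---computing the cokernel of $u^*$ at the level of $\HomA(N^\bullet,\hG)$ and identifying it with the corresponding piece in $\bX_\bullet(E)$---has already been carried out in proposition~\ref{pro:H-stable} via the diagram built from propositions~\ref{linind} and~\ref{kerb}, the present corollary is essentially a bookkeeping step. The only point requiring any care is to make sure the inclusions $K\langle\Psi_1,\dots,\Psi_i\rangle\hookrightarrow K\langle\Psi_1,\dots,\Psi_{i+1}\rangle$ are compatible with $u^*$, but this is immediate from the definition of the $\Psi_i$ in terms of $\vartheta_1$ and the lateral Frobenius $\mfrak{f}$, since $u^*\Psi_i=\Psi_i$ under the chosen Witt coordinates on $N^n\subset N^{n+1}$.
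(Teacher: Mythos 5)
Your proposal is correct and matches the paper's (implicit) argument: the corollary is exactly the combination of $\bX_{n-1}(E)=\{0\}$ for $n\leq m$ together with proposition~\ref{linind} for the first range, and iteration of the isomorphism part of proposition~\ref{pro:H-stable} (with $u^*\Psi_i=\Psi_i$) for the second.
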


Do note that we will promote this to an integral result in 
(\ref{ourdeRham2}). But before we get there, we will need some preparation.

\begin{proposition}\label{pro:I-basis}
We have
	$$
	\bI_n(E) \otimes K \simeq \left\{\begin{array}{l} 
	K\langle\Psi_1,\dots, \Psi_{n}\rangle,~ \mb{if } n\leq m-1 \\
	K\langle\Psi_1,\dots, \Psi_{m-1}\rangle,~ \mb{if } n\geq m-1
	\end{array}\right.
	$$	
\end{proposition}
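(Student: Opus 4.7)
The plan is to deduce everything from the short exact sequence~(\ref{main-ex-seq}) applied after tensoring with $K$, using the descriptions already in hand for $\HomA(N^n,\hG)_K$ and $\bX_n(E)_K$ together with the compatibility of the $\Psi_i$ with the filtration by order.

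First recall that $\bI_n(E)$ is by definition the image of $\partial$, so by~(\ref{main-ex-seq}) we have an identification $\bI_n(E)\simeq \HomA(N^n,\hG)/i^*(\bX_n(E))$; tensoring with $K$ preserves the exactness. In the regime $n\leq m-1$ we have $\bX_n(E)=\{0\}$ by the definition of the splitting order, so $\bI_n(E)_K\cong \HomA(N^n,\hG)_K$, and proposition~\ref{linind} produces the basis $\Psi_1,\dots,\Psi_n$.

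In the regime $n\geq m-1$, the strategy is a triangular elimination. By proposition~\ref{linind} the elements $\Psi_1,\dots,\Psi_n$ are a $K$-basis of $\HomA(N^n,\hG)_K$, and by proposition~\ref{kerb} the elements $\Theta_m,\phi^*\Theta_m,\dots,(\phi^{\circ(n-m)})^*\Theta_m$ are a $K$-basis of $\bX_n(E)_K$, with
$$
i^*(\phi^{\circ j})^*\Theta_m \equiv \Psi_{m+j} \pmod{K\langle\Psi_1,\dots,\Psi_{m+j-1}\rangle}
$$
for $j=0,\dots,n-m$. This says that the matrix of $i^*$ on the chosen bases, when the target basis is ordered $\Psi_1,\dots,\Psi_n$, is upper triangular with invertible diagonal entries in the block indexed by $\Psi_m,\dots,\Psi_n$. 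Therefore the submodule $i^*(\bX_n(E))_K$ admits a $K$-basis whose leading terms are exactly $\Psi_m,\Psi_{m+1},\dots,\Psi_n$, and so the images of $\Psi_1,\dots,\Psi_{m-1}$ in the quotient $\bI_n(E)_K$ span it. Independence is immediate from the same triangularity: any relation $\sum_{j=1}^{m-1}c_j\Psi_j = \sum_{i=0}^{n-m} d_i\, i^*(\phi^{\circ i})^*\Theta_m$ forces $d_{n-m}=0$ by comparing coefficients of $\Psi_n$, then $d_{n-m-1}=0$ by comparing coefficients of $\Psi_{n-1}$, and so on down to $d_0=0$, after which each $c_j=0$.

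There is no real obstacle; the only thing to be careful about is that the ``congruence modulo lower order'' in proposition~\ref{kerb} is invoked in the basis $\Psi_1,\dots,\Psi_n$ of $\HomA(N^n,\hG)_K$ provided by proposition~\ref{linind} rather than in some {\it a priori} different filtration, but these agree by construction of the $\Psi_i$. The argument automatically recovers the equality of dimensions $\dim_K \bI_n(E)_K = n-(n-m+1)=m-1$ for $n\geq m-1$, confirming the stated bases.
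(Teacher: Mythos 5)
Your argument is correct and is essentially the paper's own proof: the case $n\leq m-1$ follows from $\bX_n(E)=\{0\}$, and for $n\geq m-1$ you use the bases from propositions~\ref{linind} and~\ref{kerb} together with the fact that $i^*(\phi^{\circ j})^*\Theta_m$ is $\Psi_{m+j}$ plus lower-order terms to see that $K\langle\Psi_1,\dots,\Psi_{m-1}\rangle$ is a complement of $i^*\bX_n(E)_K$ in $\HomA(N^n,\hG)_K$, hence maps isomorphically onto $\bI_n(E)_K$ under $\partial$. Your triangular-elimination paragraph just makes explicit the step the paper states in one line.
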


\begin{proof}
The case $n\leq m-1$ is clear. So suppose $n\geq m-1$. Then
$\HomA(N^j,\hG)\otimes K$ has basis $\Psi_1,\dots,\Psi_j$,
and $\bX_n(E)\otimes K$ has basis $\Theta_m,\dots,(\phi^{n-m})^*\Theta_m$.
Since each $(\phi^j)^*\Theta_m$ equals $\Psi_{m+j}$ plus lower order terms,
$K\langle\Psi_1,\dots,\Psi_{m-1}\rangle$ is a complement to the subspace $\bX_n(E)$ of $\HomA(N^n,\hG)$.
Therefore the map $\partial$ from $K\langle\Psi_1,\dots,\Psi_{m-1}\rangle$ to the quotient $\bI_n(E)$
is an isomorphism. 
\end{proof}

Finally the morphism $\HomA(N^n,\hG)\to \Ext^\sharp(E,\hG)$ of diagram~(\ref{trivext})
vanishes on $\phi^*(\bX_{n-1}(E))$, by proposition \ref{dual}, and hence induces a morphism
of exact sequences
	\begin{equation}
	\label{diag-crys}
	\xymatrix{
	0 \ar[r] & \frac{\bX_n(E)}{\phi^*(\bX_{n-1}(E)')} \ar[r] \ar[d]_-\Upsilon & 
	\bH_n(E) \ar[r] \ar[d]^-{\Phi} & 
	\bI_n(E) \ar[r] \ar@{^{(}->}[d]& 0\\
	0 \ar[r] &\Lie(E)^* \ar[r] & \Ext^\sharp(E,\hG) \ar[r] & \Ext(E,\hG)
	 \ar[r] & 0
	}
	\end{equation}
where as in (\ref{def-I-n}), $\bI_n(E)$ denotes the image of $\partial:\Hom(N^n,\hG) \map \Ext_A(E,\hG)$.

\begin{proposition}
\label{sharp}
The map $\Phi:\bH_n(E) \otimes K \map \Ext^\sharp(E,\hG)\otimes K$ 
is injective if and only if $\gamma\neq 0$, where $\gamma\in R$ is defined as in proposition~\ref{diff}.
\end{proposition}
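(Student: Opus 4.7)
The plan is to apply the snake lemma to the morphism of exact sequences (\ref{diag-crys}) after tensoring with $K$. Since $\bI_n(E) \hookrightarrow \Ext(E,\hG)$ remains injective after inverting $\pi$, the snake lemma yields $\ker(\Phi\otimes K) = \ker(\Upsilon\otimes K)$. Thus it suffices to determine when $\Upsilon\otimes K$ is injective.

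Next, I would identify the source of $\Upsilon\otimes K$. By Proposition \ref{kerb}, the $K$-vector space $\bX_n(E)_K$ has basis $\Theta_m, \phi^*\Theta_m,\dots,(\phi^*)^{n-m}\Theta_m$, while $\phi^*(\bX_{n-1}(E)^\phi)_K$ is spanned by the last $n-m$ of these. Hence the quotient is $1$-dimensional over $K$ and generated by the class of $\Theta_m$. The condition ``$\gamma\neq 0$'' is well posed because Proposition \ref{pro:formulas}(2) gives $\tilde g(x_0) = g(x_0^\xqb)$, so $\gamma = \pi\tilde g'(0) = 0$ for every character of the form $\phi^*\Theta'$; therefore $\gamma$ descends to a functional on $\bX_n(E)/\phi^*(\bX_{n-1}(E)^\phi)$, and its value on the generating class can be computed using $\Theta_m$.

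Finally, I would compute $\Upsilon(\Theta_m)$ and compare it with $\gamma$. Part (1) of Proposition \ref{pro:formulas} gives $\Upsilon(\Theta_m) = -Dg$, where $g = \teich^*\Theta_m$. Under the identification of $\Lie(E)^*$ with $R$ via the chosen coordinate on $\hG$, this corresponds to $-g'(0)$. Since $\gamma = \pi g'(0)$ by Proposition \ref{diff} and $R$ is $\pi$-torsion free, $g'(0)$ vanishes precisely when $\gamma$ does. Thus $\Upsilon\otimes K$, whose source is $1$-dimensional, is injective if and only if $\gamma\neq 0$; combined with the snake-lemma reduction this gives the equivalence. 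All the main ingredients are already packaged into existing results, so I expect the argument to be essentially a bookkeeping exercise in combining the snake lemma with Propositions \ref{kerb}, \ref{pro:formulas}, and \ref{diff}, with no serious obstacle beyond matching the normalisations of $\Upsilon(\Theta_m)$ and $\gamma/\pi$.
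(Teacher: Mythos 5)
Your proposal is correct and follows essentially the same route as the paper's proof: reduce via the diagram (\ref{diag-crys}) to the injectivity of $\Upsilon$ on the one-dimensional space generated by the class of $\Theta_m$ (proposition~\ref{kerb}), compute $\Upsilon(\Theta_m)=-Dg$ by proposition~\ref{pro:formulas}(1), and conclude from $\gamma=\pi g'(0)$ in proposition~\ref{diff} together with $\pi$-torsion-freeness. Your extra observation that $\gamma$ vanishes on characters of the form $\phi^*\Theta'$ is a harmless addition not needed in the paper, since $\gamma$ there is attached directly to $\Theta_m$.
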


\begin{proof}
It is enough to show that $\Upsilon$ is injective if and only if $\gamma\neq 0$. 
By proposition~\ref{kerb}, the class of $\Theta_m$ is a $K$-linear basis for 
$\frac{\bX_n(E)}{\phi^*(\bX_{n-1}(E)')} \otimes K$,
and so it is enough to show $\Phi$ is injective if and only if $\Upsilon(\Theta_m)\neq 0$.
As in (\ref{eq-char-split}), write $\Theta_m = \Psi_{\Theta_m} + g_{\Theta_m}$. 
Then by proposition~\ref{pro:formulas}, it is enough to show
$g'_{\Theta_m}(0)\neq 0$ if and only if $\gamma\neq 0$. 
But this holds because by proposition~\ref{diff}, we have $\gamma=\pi g'_{\Theta_m}(0)$.
\oldmarginpar{repeated argument!}
\end{proof}

\begin{lemma}\label{lem:char-poly}
Consider the $\phi$-linear endomorphism $F$ of $K^m$ with matrix
	$$\left(\begin{array}{llllll}
	0 & 0 & \hdots & & 0 & \mu_{m} \\
	1 & 0 & & & 0 & \mu_{m-1} \\ 
	0 & 1 & & & 0 & \mu_{m-2} \\
	\vdots  &  &\ddots & \ddots & \vdots  & \vdots  \\
	  &   & & &   &   \\
	0 & 0 & & & 1 & \mu_1
	\end{array}\right), $$
for some given $\mu_1,\dots,\mu_m\in K$. 
If $K^m$ admits an $R$-lattice which is stable under $F$, then we have $\mu_1,\dots,\mu_m\in R$.
\end{lemma}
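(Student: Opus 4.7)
The plan is to pass to the $R\{\phi\}$-submodule $M$ of $K^m$ generated by the cyclic vector $e_1$, show it is free of rank $m$ over $R$, and then use the reduction modulo $\pi$ together with the fact that the twisted polynomial ring over a field is a PID to produce an especially clean $R$-basis of $M$. The companion structure gives $F^i e_1 = e_{i+1}$ for $0\leq i\leq m-1$ and
\[
F^m e_1 \;=\; \sum_{j=1}^m \mu_j\, e_{m+1-j},
\]
so the conclusion will follow once $F^m e_1$ is known to lie in the $R$-span of $e_1,\dots,e_m$.

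First I would normalize $L$. Because $\pi\in\hat A$ and $\phi$ restricts to the identity on $\hat A$, rescaling $L$ by any power of $\pi$ preserves $F$-stability. The $R$-submodule $L\cap Ke_1$ of $Ke_1$ is finitely generated and hence of the form $\pi^c Re_1$; after rescaling I may assume $c=0$, so that $L\cap Ke_1 = Re_1$. Then $e_1\in L$, and $F$-stability forces $e_2,\dots,e_m\in L$, so $L$ contains the standard lattice $L_0 := Re_1+\cdots+Re_m$.

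Next, let $M := R\{\phi\}\cdot e_1 \subset K^m$, the smallest $F$-stable $R$-submodule containing $e_1$. As a submodule of $L$ it is finitely generated, and being torsion-free over the DVR $R$ it is free; since $M\supseteq L_0$, its rank is exactly $m$. Reducing modulo $\pi$, the quotient $\bar M := M/\pi M$ is an $m$-dimensional vector space over $k := R/\pi R$ that is cyclic as a module over the twisted polynomial ring $k\{\phi\}$, with generator $\bar e_1$. Because $k$ is a field, $k\{\phi\}$ is a left PID (degree provides a left Euclidean algorithm), so $\bar M \cong k\{\phi\}/k\{\phi\}\bar Q$ for some monic $\bar Q\in k\{\phi\}$; comparing $k$-dimensions forces $\deg \bar Q = m$. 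Consequently $\bar e_1,\overline{Fe_1},\dots,\overline{F^{m-1}e_1}$ is a $k$-basis of $\bar M$.

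By Nakayama's lemma the lifts $e_1, Fe_1, \dots, F^{m-1}e_1 = e_1,\dots,e_m$ generate $M$ over $R$, and being $m$ generators of the rank-$m$ free module $M$ they form an $R$-basis. Hence $F^m e_1\in M$ admits an $R$-linear expression in terms of $e_1,\dots,e_m$, so $F^m e_1\in L_0$, and matching this expression against the identity $F^m e_1 = \sum_{j=1}^m \mu_j e_{m+1-j}$ yields $\mu_j\in R$ for all $j$. The main point that requires care is to confirm that $M$ genuinely has $R$-rank equal to $m$ (since the entire dimension count in $\bar M$ and the conclusion $\deg\bar Q = m$ depend on it); this is exactly what the chain $L_0\subset M\subset L$ of rank-$m$ lattices from the normalization step is designed to secure.
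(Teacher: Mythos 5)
Your argument is correct, and it takes a genuinely different route from the paper. The paper first reduces (after passing to the completion of the maximal unramified extension) to the case of an algebraically closed residue field and then invokes Dieudonn\'e--Manin theory as in Laumon (B.1.5 and B.1.9): the twisted polynomial $F^m-\mu_1F^{m-1}-\cdots-\mu_m$ is factored into linear factors over $K(\pi^{1/r})$, and the existence of an $F$-stable lattice forces each slope $v(\beta_i)\geq 0$ by looking at rank-one quotient modules, whence integrality of the coefficients. You avoid all of this: you normalize the given lattice so that it contains $e_1$ (legitimate precisely because $\phi(\pi)=\pi$, as you note), pass to the cyclic module $M=R\{\phi\}\cdot e_1$, which is squeezed between two full lattices and hence free of rank $m$, and then use the residue field: $\overline{M}$ is a cyclic $k\{\phi\}$-module of dimension $m$, so $\bar e_1,\dots,\overline{F^{m-1}e_1}$ span it, and Nakayama lifts this to the statement that $e_1,\dots,e_m$ generate $M$ over $R$; since these vectors are already $K$-linearly independent, the expression $F^me_1=\sum_j\mu_je_{m+1-j}$ then has coefficients in $R$ (you do not even need the final ``basis'' refinement). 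Two small observations: the left-division step in $k\{\phi\}$ only uses injectivity of the $\xqb$-power map on the residue field, so no perfectness or algebraic closedness is needed, and one could even replace the appeal to the left-PID property by the elementary remark that the flag $W_i=k\langle\bar e_1,\dots,\overline{F^ie_1}\rangle$ stabilizes as soon as it stops growing. What your approach buys is an elementary, self-contained proof with no base change and no classification theorem, which moreover exhibits a canonical $F$-stable lattice (the $R$-span of $e_1,\dots,e_m$) on which the companion matrix is visibly integral; what the paper's approach buys is the placement of the lemma inside the standard slope formalism for $F$-isocrystals, which is conceptually aligned with the surrounding crystalline discussion but is strictly stronger machinery than the statement requires.
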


\begin{proof}	
We use Dieudonn\'e--Manin theory.
Without loss of generality, \oldmarginpar{why?}
we may assume that $R/\pi R$ is algebraically closed. 
Let $P$ denote the polynomial $F^m-\mu_1 F^{m-1}-\cdots-\mu_m$ in the twisted polynomial
ring $K\{F\}$. Then by (B.1.5) of~\cite{Laumon-book-vol1} (page 257), 
there exists an integer $r\geq 1$
and elements $\beta_1,\dots,\beta_m\in K(\pi^{1/r})$ such that we have
	$$
	P = (F-\beta_1)\cdots(F-\beta_m)
	$$
in the ring $K(\pi^{1/r})\{F\}$ with commutation law $F\pi^{1/r}=\pi^{1/r}F$.
(Note that the results of~\cite{Laumon-book-vol1} are stated under the assumption that the residue field of $R$
is an algebraic closure of
\oldmarginpar{check}
$\bF_p$, but they hold if it is any algebraically closed field of characteristic $p$.)
Since $R=K\cap R[\pi^{1/r}]$, it is enough to show $\mu_i\in R[\pi^{1/r}]$.
Therefore, by replacing $R[\pi^{1/r}]$ with $R$, it is enough to assume that $P$ factors
as above where in addition all $\beta_i$ lie in $K$.

Now fix $i$, and let us show $\beta_i\in R$. Assume
$\beta_i\neq 0$, the case $\beta_i=0$ being immediate. Because the (left)
$K\{F\}$-module $K^m$ has an $F$-stable integral lattice $M$, every quotient of
$K^m$ also has a $F$-stable integral lattice, namely
the image of $M$. By (B.1.9) of~\cite{Laumon-book-vol1} (page
260), for each $i$, the $K\{F\}$-module $K^m$ has a quotient (in fact, a summand) isomorphic to
$N=K\{F\}/K\{F\}(F-\pi^{v(\beta_i)})$. Therefore $N$ also has a $F$-stable integral lattice. But
this can happen only if $v(\beta_i)\geq 0$, because $F$ sends the basis element $1\in N$ to 
$\pi^{v(\beta_i)}\in N$. 
\end{proof}

\begin{theorem}
\label{intlam}
If $E$ splits at $m$, then we have $\lam_1\dots,\lambda_{m-1} \in R$, where the $\lambda_i$
are as defined in section~\ref{sec:Theta-m}.
\end{theorem}

\begin{proof}
We will prove the cases when $\gamma =0$ and $\gamma \ne 0$ separately,
where $\gamma$ is defined as in proposition~\ref{diff}.

\underline{Case $\gamma =0$}
When $\gamma =0$ we have $\mfrak{f}^* i^* = i^* \phi^*$, and hence
for all $n \geq 1$, this induces a $\phi$-linear map $\mfrak{f}^*:
\bI_{n-1}(E) \map \bI_n(E)$ as follows
	$$
	\xymatrix{
		0 \ar[r] & \bX_n(E) \ar[r]^-{i^*} & \HomA(N^n,\hG) \ar^-\partial[r] & \bI_n(E) \ar[r] & 0 \\
		0 \ar[r] & \bX_{n-1}(E) \ar[u]_\phi \ar[r]^-{i^*} & \HomA(N^{n-1},\hG) \ar^-\partial[r] 
			\ar[u]_{\mfrak{f}^*} & \bI_{n-1}(E) \ar[r] \ar[u]_{\mfrak{f}^*} & 0 \\
	}
	$$
Let $\bI(E)= \varinjlim \bI_n(E) \subseteq \Ext(E,\hG)$.
Then by proposition~\ref{pro:I-basis},
the vector space $\bI(E)_K$ has a $K$-basis $\partial\Psi_1,\dots ,\partial\Psi_{m-1}$, and
with respect to this basis, the $\phi$-linear endomorphism $\mfrak{f}^*$ has matrix
	$$
	\Gamma_0 = \left(\begin{array}{llllll}
	0 & 0 & \hdots & & 0 & \lam_{1} \\
	1 & 0 & & & 0 & \lam_{2} \\ 
	0 & 1 & & & 0 & \lam_{3} \\
	\vdots  &  &\ddots & \ddots & \vdots  & \vdots  \\
	  &   & & &   &   \\
	0 & 0 & & & 1 & \lam_{m-1}
	\end{array}\right) 
	$$
Since $\bI(E)$ is contained in $\Ext(E,\hG)$, it is a finitely generated free $R$-module and
hence an integral lattice in $\bI(E)_K$.
But then lemma~\ref{lem:char-poly} implies $\lam_1,\dots ,\lam_{m-1} \in R$.

\underline{Case $\gamma \ne 0$}
Let $\bH(E)=\varinjlim \bH_n(E)$. Let us consider the matrix $\Gamma$ of the $\phi$-linear endomorphism
$\mfrak{f}$ of $\bH(E)_K$ with respect to the $K$-basis $\Psi_1,\dots,\Psi_m$ given by corollary~\ref{Mn}. 
Then by proposition~\ref{diff} and equation (\ref{eq-theta-def}), we have
	\begin{align*}
	i^* \phi^*\Theta_m &= \mfrak{f}^*i^*\Theta_m + \gamma\Psi_1 \\
	 &= \mfrak{f}^*(\Psi_{m}-\lam_{m-1}\Psi_{m-1}-\cdots -\lam_{1}\Psi_1) + \gamma\Psi_1 \\
	 &= \mfrak{f}^*(\Psi_{m})-\phi(\lam_{m-1})\Psi_m-\cdots -
\phi(\lam_{1})\Psi_2 + \gamma\Psi_1.
	\end{align*}
Therefore we have 
	$$
	\mfrak{f}^*(\Psi_m) \equiv \phi(\lam_{m-1})\Psi_m+\cdots + \phi(\lam_{1})\Psi_2 - \gamma\Psi_1 \mod
	i^*\phi^*(\bX_{m}')
	$$
and hence
	$$ 
	\Gamma = 
	\left(\begin{array}{llllll}
	0 & 0 & \hdots & & 0 & -\gamma \\
	1 & 0 & & & 0 & \phi(\lam_{1}) \\ 
	0 & 1 & & & 0 & \phi(\lam_{2}) \\
	\vdots  &  &\ddots & \ddots & \vdots  & \vdots  \\
	0 & 0 & & & 0 & \phi(\lam_{m-2})  \\
	0 & 0 & & & 1 & \phi(\lam_{m-1})
	\end{array}\right) 
	$$
We will now apply lemma~\ref{lem:char-poly} to the operator $\mfrak{f}^*$ on $\bH(E)_K$, but to do this we need
to produce an integral lattice $M$. Consider the commutative square
	$$
	\xymatrix{
	\bH(E) \ar^-{\Phi}[r]\ar[d] & \Ext^\sharp(E,\hG) \ar^j[d] \\
	\bH(E)_K \ar^-{\Phi_K}[r]  & \Ext^\sharp(E,\hG)_K.
	}
	$$
Let $M$ denote the image of $\bH(E)$ in $\bH(E)_K$. It is clearly stable under $\mfrak{f}^*$.
But also the maps $\Phi_K$ and $j$ are injective, by proposition~\ref{sharp} and because 
$\Ext^\sharp(E,\hG)\simeq R^r$;
so $M$ agrees with the image of $\bH(E)$ in $\Ext^\sharp(E,\hG)$ and is therefore finitely generated.

We can then apply lemma~\ref{lem:char-poly} and deduce $\phi(\lam_{m-1}),\dots ,\phi(\lam_{1}) \in R$.
This implies $\lam_{m-1},\dots,\lam_{1}\in R$, since $R/\pi R$ is a field and hence 
the Frobenius map on it is injective. \oldmarginpar{ok assumptions on $R$ here?}
\end{proof} 

\begin{corollary}
\label{cor:square-iso}
\begin{enumerate}
	\item The element $\Theta_m\in\bX_m(E)_K$ lies in $\bX_m(E)$.
	\item For $n\geq m$, all the maps in the diagram 
	$$
	\xymatrix{
	\bX_n(E)/\bX_{n-1}(E) \ar^{\phi^*}[r]\ar^{i^*}[d] & \bX_{n+1}(E)/\bX_n(E) \ar^{i^*}[d] \\
	\HomA(N^n,\hG)/\HomA(N^{n-1},\hG) \ar^{\mfrak{f}^*}[r] & \HomA(N^{n+1},\hG)/\HomA(N^n,\hG)
	}
	$$
are isomorphisms.
\end{enumerate}
\end{corollary}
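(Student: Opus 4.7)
The plan is to deduce both parts of the corollary from Theorem~\ref{intlam} together with the filtration picture of Proposition~\ref{pro-filt}; the real content lies in Theorem~\ref{intlam}, and what remains is essentially bookkeeping.

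For part~(1), once Theorem~\ref{intlam} gives $\lam_1,\dots,\lam_{m-1}\in R$, the combination $\Psi_m-\lam_{m-1}\Psi_{m-1}-\cdots-\lam_1\Psi_1$ is an $R$-linear combination of elements of $\HomA(N^m,\hG)$ and therefore itself lies in $\HomA(N^m,\hG)$. By the construction of $\Theta_m$ in~(\ref{eq-theta-def}) this element lies in $\ker(\partial)$, so the exact sequence~(\ref{main-ex-seq}) produces a unique integral character in $\bX_m(E)$ mapping to it. By injectivity of $\bX_m(E)\hookrightarrow\bX_m(E)_K$, this integral character must coincide with the previously constructed rational $\Theta_m$, proving~(1).

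For part~(2), I would start from Proposition~\ref{pro-filt}, which already supplies injectivity of the two vertical maps $i^*$ and bijectivity of the bottom horizontal $\mfrak{f}^*$. By commutativity of the square it then suffices to show that each $i^*$ with index $n\geq m$ is surjective; the top horizontal $\phi^*$ will automatically be an isomorphism. To exhibit a preimage of the class of $\Psi_n$ under $i^*:\bX_n(E)/\bX_{n-1}(E)\to\HomA(N^n,\hG)/\HomA(N^{n-1},\hG)$, I would use the character $(\phi^{n-m})^*\Theta_m\in\bX_n(E)$, which is integral thanks to part~(1). Proposition~\ref{kerb} gives
$$
i^*(\phi^{n-m})^*\Theta_m=\Psi_n+\alpha
$$
for some $\alpha\in\HomA(N^{n-1},\hG)_K$. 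Integrality of the left-hand side confines $\alpha$ to the intersection $\HomA(N^n,\hG)\cap\HomA(N^{n-1},\hG)_K$, and the explicit $R$-bases $\Psi_1,\dots,\Psi_n$ and $\Psi_1,\dots,\Psi_{n-1}$ from Proposition~\ref{linind} identify this intersection with $\HomA(N^{n-1},\hG)$. Hence $\alpha$ is itself integral, and $i^*(\phi^{n-m})^*\Theta_m$ reduces modulo $\HomA(N^{n-1},\hG)$ to the class of $\Psi_n$, as required.

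The only genuinely difficult step is Theorem~\ref{intlam}; once that integrality is in hand, the present argument is formal. The only mildly non-obvious point is the lattice intersection identification at the end, which rests specifically on the explicit $\Psi_i$-basis from Proposition~\ref{linind} rather than any abstract principle.
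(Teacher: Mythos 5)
Your proposal is correct, and its inputs are exactly the paper's: Theorem~\ref{intlam}, the exact sequence~(\ref{main-ex-seq}), Proposition~\ref{pro-filt}, and the $\Psi_i$-basis of Proposition~\ref{linind}. Part~(1) is the paper's argument essentially verbatim. In part~(2) you take a slightly different route: the paper handles only the case $n=m$ directly (the class of $i^*\Theta_m$ is $\Psi_m$ modulo $\HomA(N^{m-1},\hG)$, now integrally, by part~(1)) and then propagates surjectivity of $i^*$ to all $n>m$ by induction, using commutativity of the square and bijectivity of $\mfrak{f}^*$; this avoids any appeal to Proposition~\ref{kerb} or to a lattice-intersection step. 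You instead argue at each level $n$ directly with $(\phi^*)^{n-m}\Theta_m$ (integral by part~(1), since $\phi^*$ preserves integral characters), invoking Proposition~\ref{kerb} and then showing the lower-order defect $\alpha$ is integral via the identification $\HomA(N^n,\hG)\cap\HomA(N^{n-1},\hG)_K=\HomA(N^{n-1},\hG)$, which is indeed justified by the $\Psi_i$-bases. Both arguments are sound and of comparable length; the paper's induction is marginally more economical, while your direct version has the small bonus of explicitly establishing, inside the proof, the fact the paper records only as a remark after the corollary, namely that the lower-order terms in $i^*(\phi^*)^{j}\Theta_m$ are themselves integral.
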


\begin{proof}
(1): By theorem~\ref{intlam}, the element $i^*\Theta_m$ of $\HomA(N^m,\hG)_K$ actually lies
in $\HomA(N^m,\hG)$, and therefore  by the exact sequence~(\ref{main-ex-seq}) we have $\Theta_m\in\bX_m(E)$.

(2): By proposition~\ref{pro-filt}, we know $\mfrak{f}^*$ is an isomorphism.

By proposition~\ref{pro-filt}, the maps $i^*$ are injective
for all $n\geq m$. So to show they are isomorphisms, it is enough
to show they are surjective. The $R$-linear generator $\Psi_m$ of $\HomA(N^n,\hG)/\HomA(N^{n-1},\hG)$
is the image of $\Theta_m$, which by part (1), lies in $\bX_m(E)$. Therefore $i^*$ is surjective
for $n=m$. Then because $\mfrak{f}^*$ is an isomorphism,
it follows by induction that $i^*$ is surjective for all $n\geq m$.

Finally, $\phi^*$ is an isomorphism because all the other morphisms in the diagram are.
\end{proof}

We knew before that $i^*(\phi^j)^*\Theta_m$ agrees with $\Psi_{m+j}$ plus lower order rational characters, but
the corollary above implies that these lower order characters are in fact integral.

\begin{theorem}\label{phigen-body}
	Let $E$ be a Drinfeld module that splits at $m$. 
	\begin{enumerate}
		\item For any $n\geq m$, the composition
			\begin{equation}
				\label{map-X}
				\bX_n(E) \longmap \HomA(N^n,\hG) \longmap \HomA(N^n,\hG)/\HomA(N^{m-1},\hG)				
			\end{equation}
			is an isomorphism of $R$-modules. 
		\item $\bX_n(E)$ is freely generated as an $R$-module by $\Theta_m,\dots,(\phi^*)^{n-m}\Theta_m$.
	\end{enumerate}
\end{theorem}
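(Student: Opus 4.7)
The plan is to deduce both statements from Corollary~\ref{cor:square-iso} by a short induction; the hard work has already been done in establishing Theorem~\ref{intlam} and its corollary.

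For part (1), I would proceed by induction on $n\geq m$. The base case $n=m$ is immediate: since $\bX_{m-1}(E)=0$ by hypothesis, the map in question is precisely
$$\bX_m(E)/\bX_{m-1}(E)\longmap \HomA(N^m,\hG)/\HomA(N^{m-1},\hG),$$
which is an isomorphism by Corollary~\ref{cor:square-iso}(2). For the inductive step, I would consider the commutative diagram of short exact sequences
$$
\xymatrix@C=1.5em{
0 \ar[r] & \bX_n(E) \ar[d] \ar[r] & \bX_{n+1}(E) \ar[d] \ar[r] & \bX_{n+1}(E)/\bX_n(E) \ar[d] \ar[r] & 0 \\
0 \ar[r] & \frac{\HomA(N^n,\hG)}{\HomA(N^{m-1},\hG)} \ar[r] & \frac{\HomA(N^{n+1},\hG)}{\HomA(N^{m-1},\hG)} \ar[r] & \frac{\HomA(N^{n+1},\hG)}{\HomA(N^n,\hG)} \ar[r] & 0
}
$$
The top row is exact because $\bX_n(E)\hookrightarrow \bX_{n+1}(E)$ (via $u^*$); the bottom row is exact by the third isomorphism theorem. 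The left vertical arrow is an isomorphism by the inductive hypothesis, and the right vertical arrow is an isomorphism by Corollary~\ref{cor:square-iso}(2). By the five lemma, the middle map is an isomorphism.

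For part (2), first observe that by part~(1), $\bX_n(E)$ is a free $R$-module of rank $n-m+1$, since by Proposition~\ref{linind} the quotient $\HomA(N^n,\hG)/\HomA(N^{m-1},\hG)$ is free with basis given by the images of $\Psi_m,\Psi_{m+1},\dots,\Psi_n$. It remains to check that $\Theta_m,\phi^*\Theta_m,\dots,(\phi^*)^{n-m}\Theta_m$ is a basis. By Proposition~\ref{kerb} together with the remark following Corollary~\ref{cor:square-iso}, each $i^*(\phi^*)^j\Theta_m$ equals $\Psi_{m+j}$ plus an $R$-linear combination of $\Psi_1,\dots,\Psi_{m+j-1}$. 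After projection to $\HomA(N^n,\hG)/\HomA(N^{m-1},\hG)$, the contributions from $\Psi_1,\dots,\Psi_{m-1}$ vanish, and so the images of $\Theta_m,\phi^*\Theta_m,\dots,(\phi^*)^{n-m}\Theta_m$ are related to the $R$-basis $\Psi_m,\dots,\Psi_n$ by a unipotent triangular change of basis. Such a change of basis is invertible over $R$, so by part~(1) the elements $\Theta_m,\phi^*\Theta_m,\dots,(\phi^*)^{n-m}\Theta_m$ form a free $R$-basis for $\bX_n(E)$.

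I do not expect any genuine obstacle: both parts are formal consequences of earlier results, and the main work was completed in the proof of Theorem~\ref{intlam} establishing integrality of the $\lambda_i$. The only mild subtlety is ensuring in part~(2) that one really has integral (and not merely rational) coefficients in the expansion of $i^*(\phi^*)^j\Theta_m$ in terms of $\Psi_1,\dots,\Psi_{m+j}$, but this is precisely what the remark after Corollary~\ref{cor:square-iso} records.
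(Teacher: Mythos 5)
Your proposal is correct and follows essentially the same route as the paper: part (1) is deduced from the graded-piece isomorphisms of Corollary~\ref{cor:square-iso} (you simply make the filtration/induction and five-lemma argument explicit where the paper says it "follows"), and part (2) is the same unipotent triangular change of basis between the $(\phi^*)^j\Theta_m$ and the $\Psi_{m+j}$, using the integrality of the lower-order terms recorded in the remark after Corollary~\ref{cor:square-iso}. No gaps.
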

\begin{proof}
(i): By corollary~\ref{cor:square-iso}, the induced morphism on each graded piece is an isomorphism.
It follows that the map in question is also an isomorphism.

(ii): This follows formally from (i) and the fact, which follows from \ref{cor:square-iso}, that 
the map (\ref{map-X}) sends any $(\phi^*)^j\Theta_m$ to $\Psi_{m+j}$ plus lower order terms.
\end{proof}

\subsection{$\bH(E)$ and de Rham cohomology}
\label{subsec-ourdeRham}
Collecting the results above, we can now prove theorem~\ref{fullcrys-intro}.
Let $m$ denote the splitting order of $E$, as defined in section 8.
We have isomorphisms
	\begin{align*}
	R\langle\Psi_1,\dots,\Psi_{m-1}\rangle &= \HomA(N^{m-1},\hG) \longisomap \bI_n(E) \\ 
	R\langle\Psi_1,\dots,\Psi_m\rangle &= \HomA(N^{m},\hG) \longisomap \bH_n(E)
	\end{align*}
for $n\geq m$, and hence in the limit
	\begin{align}
	R\langle\Psi_1,\dots,\Psi_{m-1}\rangle &\longisomap \bI(E) \\ 
	R\langle\Psi_1,\dots,\Psi_m\rangle &\longisomap \bH(E)  \label{ourdeRham2}
	\end{align}
And so the $K$-linear bases of $K\otimes \bI(E)$ and $K\otimes \bH(E)$---the ones respect to which
the action of $\mfrak{f}^*$ is described by the matrices $\Gamma_0$ and $\Gamma$
in the proof of theorem~\ref{intlam}---are in fact $R$-linear bases of $\bI(E)$ and $\bH(E)$.

We also have isomorphisms for $n\geq m$
	$$
	R\langle \Theta_m\rangle = \bX_m(E) \longisomap \bX_n(E)/\phi^*(\bX_{n-1}(E)').
	$$
Combining these, we have the following map between exact sequences of $R$-modules, as in (\ref{diag-crys}):
	$$
	\xymatrix{
	0 \ar[r] & 
		\bX_m(E) \ar[r] \ar[d]_-\Upsilon & 
		\bH(E) \ar[r] \ar[d]^-{\Phi} & 
		\bI(E) \ar[r] \ar@{^{(}->}[d]& 
		0\\
	0 \ar[r] &
		\Lie(E)^* \ar[r] & 
		\Ext^\sharp(E,\hG) \ar[r] & 
		\Ext(E,\hG) \ar[r] & 
		0
	}
	$$
where $\Upsilon$ sends $\Theta_m$ to $\gamma/\pi$ (in coordinates).
It follows that $\Phi$ is injective if and only if $\gamma\neq 0$. \oldmarginpar{improve this}

\section{Computation of $\lam_1$ and $\gamma$ in the rank $2$ case}
\label{sec-computation2}

In this section, we compute $\lambda_1$ and $\gamma$ for Drinfeld modules of rank $2$, the first
nontrivial case. Recall from (\ref{eq-char-split}), proposition \ref{diff}(1), and (\ref{eq-theta-def}) that we have
	\begin{equation}
		\Theta_2 = \Psi_2(x',x'') - \lambda_1\Psi_1(x') + \pi^{-1}\gamma x + (\text{higher-degree
		terms in }x)
	\end{equation}
assuming the splitting number $m$ is $2$.
The result below shows that $\lambda_1$ and $\gamma$
depend on the higher Buium derivatives $a'_i, a''_i,\dots$ of
the modular parameters $a_i$, and not only on the modular parameters themselves. So it seems that our
$F$-crystal $\bH$ is not determined by the classical realizations, such as the crystalline realization
or the Tate module, in any straightforward manner.

\begin{theorem} 
Let $A= \bF_\xqa[v]$ with $\xqa\geq 3$, let $t \in A$ be an irreducible polynomial of degree $\rdeg$,
and let $E$ be a Drinfeld module over $R$ satisfying
	\begin{align}
	\vp_E(t)(x)= \pi x + a_1 x^\xqa + a_2 x^{\xqa^2}. \label{tact2}
	\end{align}
Then we have
	$$
	\lam_1\equiv (-1)^\rdeg w^\frac{\xqa^{\rdeg-1}(\xqa^\rdeg-1)}{\xqa-1}\big(1-a'_1 w^{\xqa^{\rdeg-1}} + a'_2 
		 	w^{\xqa^{\rdeg-1}+\xqa^{\rdeg}}\big)^{\xqa^\rdeg-1} \mod \pi,
	$$
where $w=a_1 a_2^{-1}$, and
	$$
	\gamma \mod \pi^2 \equiv  \left\{\begin{array} {ll}
\pi\lambda_{1}/a_1, & \mb{ if } f =1 \\
-\pi\lambda_1/a_2, & \mb{ if } a_1 \equiv 0 \mod \pi \mb{ and } \rdeg = 2\\
0,  & \mb{ if } a_1 \not\equiv 0 \mod \pi \mb{ or } \rdeg \geq 3 
\end{array} \right.
	$$
\end{theorem}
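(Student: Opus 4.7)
The plan is to construct $\Theta_2$ explicitly as a $t$-linear restricted power series on $J^2E$ in Witt coordinates, and then read off $\lambda_1$ and $\gamma$ from its expansion. By proposition~\ref{pro:formal2}, $t$-linearity is equivalent to $A$-linearity since $\mathbb{F}_q[t]$ is a strict sub-$\mathbb{F}_q$-algebra of $A=\mathbb{F}_q[v]$, so the only constraint to impose is $\Theta_2 \circ \varphi_{J^2E}(t) = \pi\cdot\Theta_2$. I will parametrize $\Theta_2$ using the decomposition of~(\ref{eq-char-split}): write $\Theta_2(x_0,x_1,x_2) = g(x_0) + \Psi(x_1,x_2)$ where by~(\ref{eq-theta-def}) we have $\Psi = \Psi_2 - \lambda_1\Psi_1$, with $\Psi_1$ and $\Psi_2$ given in terms of the canonical isomorphism $\vartheta_1$ of theorem~\ref{kernel} and the lateral Frobenius $\mfrak{f}$. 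Using proposition~\ref{coordinate} I will switch between Witt and Buium--Joyal coordinates as convenient; the latter are well adapted to describing $\varphi_{J^2E}(t)$ since $\delta$ and $\phi$ act simply on them, while the former expose the Verschiebung structure in $N^2$.

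Next, I would expand the identity $\Theta_2\circ\varphi_{J^2E}(t) = \pi\Theta_2$ and reduce modulo $\pi$ to extract $\lambda_1 \bmod\pi$. By theorem~\ref{pal3}, the higher $\tau$-coefficients of $\vartheta_1$ have positive $\pi$-valuation, so $\vartheta_1(y)\equiv y \bmod \pi$ and the reduction of $\Psi$ is a simple polynomial in the coordinates of $N^2$. On the other side, the reduction of $\varphi_{J^2E}(t)$ is governed by $F$ and $V$; since $t$ is irreducible of degree $\rdeg$, the exponent $\hat q = q^\rdeg$ enters through the interaction of $F$ with the Frobenius lift $\phi$ on $R$. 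Matching the leading monomials in $x_1$ of degrees $q^{\rdeg-1}$ and $q^\rdeg$ on both sides will yield the claimed formula for $\lambda_1 \bmod\pi$: the factor $w^{q^{\rdeg-1}(q^\rdeg-1)/(q-1)}$ comes from the iterated composition of $\tau$-multiplications forced by running $\mfrak{f}$ and then $\vartheta_1$, the sign $(-1)^\rdeg$ is the usual sign of an $\rdeg$-fold Frobenius composition, and the factor $\bigl(1 - a_1' w^{q^{\rdeg-1}} + a_2' w^{q^{\rdeg-1}+q^\rdeg}\bigr)^{q^\rdeg-1}$ arises from normalizing to the monic form $\Psi_2 - \lambda_1 \Psi_1$ after raising the $\phi$-image of the structure constants to the $(q^\rdeg-1)$-st power (the $a_j'=\delta(a_j)$ appearing via proposition~\ref{coordinate} when passing between coordinate systems).

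For $\gamma = \pi g'(0) \bmod \pi^2$, I would push the $t$-linearity identity one order further in $\pi$. By proposition~\ref{diff}, $g'(0)$ controls the discrepancy between $\phi^*\Theta_m$ and $\mfrak{f}^* i^*\Theta_m$, so computing $\gamma$ reduces to solving for the linear coefficient of $g$ modulo $\pi^2$ from the identity established above. The three cases in the statement reflect which term of $\varphi_E(t) = \pi + a_1\tau + a_2\tau^2$ produces the dominant contribution to this coefficient: when $\hat q \neq q$ (equivalently $\rdeg\geq 2$) and $a_1\not\equiv 0\bmod\pi$, the $a_1\tau$ term dominates and yields $\pi\lambda_1/a_1$; when $a_1\equiv 0\bmod\pi$, the $a_2\tau^2$ term becomes relevant, giving $-\pi\lambda_1/a_2$ in the borderline case $\rdeg=2$ and zero when $\rdeg\geq 3$ because the $a_2$-contribution is pushed to even higher order in $\pi$. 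The main obstacle will be the multi-layered bookkeeping: one must simultaneously track Witt-coordinate expansions, $\tau$-power expansions of $\vartheta_1$ and of $\varphi_E(t)$-iterates, and $\pi$-adic valuations, and then verify that the estimates of lemma~\ref{palha} and theorem~\ref{pal3} are tight enough to isolate precisely the claimed congruences mod $\pi$ and mod $\pi^2$.
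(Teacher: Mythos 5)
Your general plan (impose $t$-linearity of $\Theta_2$ in coordinates and reduce mod $\pi$) is the right starting point, and for $\gamma$ it is essentially what the paper does: evaluate $\Theta_2(\vp_{J^2E}(t)(x,0,0))=\pi\Theta_2(x,0,0)\equiv 0 \bmod \pi$ at the Teichm\"uller section and compare coefficients of $x^\xqa$ and $x^{\xqa^2}$. But for $\lam_1$ there is a genuine gap. The identity $\Theta_2\circ\vp_{J^2E}(t)=\pi\Theta_2$ does not involve $\lam_1$ alone: writing $\Theta_2=g(x_0)+\Psi_2-\lam_1\Psi_1$ as in~(\ref{eq-char-split}), the cocycle $\eta_{J^2E}$ feeds $x_0$ into the $N^2$-coordinates, so every coefficient comparison mixes $\lam_1$ with the infinitely many unknown coefficients $\alpha_j$ of $g=\sum_j\alpha_j\tau^j$ (restricting to $N^2$, where $g$ drops out, gives no information on $\lam_1$ at all). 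Hence ``matching the leading monomials in $x_1$'' cannot isolate $\lam_1$; you must eliminate $g$ systematically. The paper does this by passing to $\Ext_A(E,\hG)\simeq R\{\tau\}\h$ modulo the inner derivations $\pi\alpha-\alpha\circ\vp_E(t)$: it computes the push-out classes $\partial\Psi_1$ and $\partial\Psi_2$ explicitly from $\eta_{J^1E}$, $\eta_{J^2E}$ (using $\vartheta_1\equiv\tau^0$ and $\mfrak{f}\equiv$ the $\xqb$-power map mod $\pi$), reduces them via the relations $\tau^{i+1}\equiv(-1)^i w^{(\xqa^i-1)/(\xqa-1)}\tau^1\bmod\pi$ coming from those inner derivations, and then uses $\partial\Psi_2=\lam_1\partial\Psi_1$ (the defining relation~(\ref{imagepsi2}) for $m=2$, since $\Ext_A(E,\hG)$ has rank $1$ here) to get $\lam_1$ as a ratio. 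In particular your attributions of the individual factors are not derivations and are partly wrong: the factor $\big(1-a_1'w^{\xqa^{\rdeg-1}}+a_2'w^{\xqa^{\rdeg-1}+\xqa^\rdeg}\big)^{\xqa^\rdeg-1}$ does not come from ``normalizing to monic form''; it appears because the same parenthetical factor occurs to the power $\xqa^\rdeg$ in $\partial\Psi_2$ and to the power $1$ in $\partial\Psi_1$, and likewise the sign and the $w$-power come from the ratio of the two reduced classes.

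A secondary symptom that the case analysis for $\gamma$ was not actually derived: the coefficient of $x^\xqa$ in the mod-$\pi$ identity gives $\alpha_0 a_1\equiv\lam_1$ when $\xqb=\xqa$ (so $\gamma\equiv\pi\lam_1/a_1$), whereas for $\xqb\neq\xqa$ it gives $\alpha_0 a_1\equiv 0$, hence $\gamma\equiv 0$ when $a_1$ is a unit; your claim that for $\xqb\neq\xqa$ and $a_1\not\equiv 0$ the ``$a_1\tau$ term dominates and yields $\pi\lam_1/a_1$'' is the opposite of what the computation shows (the statement's first case label contains a misprint, and the heuristic reproduces the misprint rather than the mathematics). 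Finally, note that the $\gamma$ computation needs the value of $\lam_1$ as input, so the missing elimination step for $\lam_1$ propagates into the second half of your argument.
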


Observe that when $\vp_E(t)(x)$ is of the form $\pi x + a x^\xqa + x^{\xqa^2}$, which is always true 
after changing the coordinate $x$ (perhaps passing to a cover of $S$), we have the simplified forms
	\begin{align}
	\lam_1 &\equiv  (-1)^\rdeg a^\frac{\xqa^{\rdeg-1}(\xqa^\rdeg-1)}{\xqa-1}
		\big(1-a' a^{\xqa^{\rdeg-1}}\big)^{\xqa^\rdeg-1} \mod \pi,
\end{align}

\begin{align}
	\gamma \mod \pi^2 \equiv  \left\{\begin{array} {ll}
\pi\lambda_{1}/a, & \mb{ if } f=1\\
-\pi\lambda_1, & \mb{ if } a \equiv 0 \mod \pi \mb{ and } \rdeg = 2 \\
0,  & \mb{ if } a \not\equiv 0 \mod \pi \mb{ or } \rdeg \geq 3 
\end{array} \right.
\end{align}

\begin{proof}
Let $\vartheta_1:N^1 \map \hG$ be the isomorphism defined in 
theorem \ref{kernel}. Then $\vartheta_1 \equiv \tau^0 \bmod \pi$. Also $\vartheta_1$ 
induces the isomorphism $(\vartheta_1)_*:\Ext(E,N^1) \map \Ext(E,\hG)$.
In order to determine the action of $A$ on $J^1E$ and $J^2E$ we need to 
determine how $t$ acts on the coordinates $x'$ and $x''$. 
Now we note that $J^nE\simeq W_n$ can be endowed with the $\d$-coordinates 
(denoted $[z,z',z'',\dots ]$) or the Witt coordinates (denoted 
$(z_0,z_1,z_2,\dots )$) 
and they are related by the following in $J^2E$ by proposition \ref{coordinate}
	\begin{align}
	[z,z',z'']=(z,z',z''+\pi^{\xqb-2}(z')^\xqb). \label{delwitt}
	\end{align}

Taking $\pi$-derivatives of both sides of equation (\ref{tact2}) using the formula
	$$
	\delta(ax^{\xqa^j}) = a'x^{\xqb\xqa^i} + \phi(a)\pi^{\xqa^i-1}(x')^{\xqa^i},
	$$
we obtain
	\begin{equation}
		\begin{split}
	\vp(t)(x') = \pi' x^{\xqb} &+ a_1'x^{\xqa\xqb}+a_2' x^{\xqa^2\xqb}\\
		 &+ \pi x'+ \phi(a_1)\pi^{\xqa-1} (x')^\xqa + \phi(a_2)\pi^{\xqa^2-1} (x')^{\xqa^2} 					
		\end{split}
	\end{equation}
and
	\begin{equation}
	\begin{split}
	\vp(t)(x'') = \pi''x^{\xqb^2} &+ a_1''x^{\xqa\xqb^2}+a_2''x^{\xqa^2\xqb^2}\\
		&+\mb{\{terms with $x'$ and $x''$\} } \label{x''-2}
	\end{split}
	\end{equation}
Then the $A$-action $\vp_{J^1E}:A \map \End(J^1E)$ is given in Witt coordinates by the $2\times 2$ matrix
	$$
	\vp_{J^1E}(t)= 
	\left(
	\begin{array}{ll} 
		\vp_E(t) & 0 \\ 
		\eta_{J^1E} & \vp_{N^1} (t) 
	\end{array}
	\right)
	$$
where $\eta_{J^1E} = \pi' x^{\xqb} + a_1'x^{\xqa\xqb}+a_2' x^{\xqa^2\xqb}$. 
And by (\ref{x''-2}) and (\ref{delwitt}), the $A$-action 
$A\map \End(J^2E)$ is given by the $(1+2)\times(1+2)$ block matrix
	\begin{align*}
	\vp_{J^2E}(t)=
	\left(\begin{array}{ll} \vp_E(t) & 0 \\ \eta_{J^2E} & \vp_{N^2}(t)
	\end{array}\right)
	\end{align*}
where (using~\ref{delwitt}) $\eta_{J^2E}$ is the column vector
	$$
	\eta_{J^2E} =
	\left(
	\begin{array}{c} 
		\pi' x^{\xqb} + a_1'x^{\xqa\xqb}+a_2' x^{\xqa^2\xqb} \\ 
		\Delta(\pi)x^{\xqb^2} + \Delta(a_1) x^{\xqa\xqb^2} + \Delta(a_2) x^{\xqa^2\xqb^2}
	\end{array}
	\right)
	$$
and where $\Delta(y)= y''+ \pi^{\xqb-2}(y')^\xqb$.

Now we will consider two cases---

(1): Consider $\eta_{{\Psi_1}_*(J^1E)} \in \Ext(E,\hG)$ which is the image of 
$\Psi_1$ under the connecting morphism $\HomA(\hG,\hG) \stk{\partial}
{\map} \Ext(E,\hG)$ and $\Psi_1=\vartheta_1:N^1 \map \hG$ is the isomorphism defined in 
theorem \ref{kernel} and satisfies $\Psi_1=\tau^0 \mb{ mod } \pi$.
	$$
	\xymatrix{
	0 \ar[r]& N^1 \ar[r] \ar[d]_-{\Psi_1} & J^1E \ar[r] \ar[d] & E \ar[r] 
	\ar@{=}[d] & 0 \\
	0 \ar[r] & \hG \ar[r] & f_*(J^1E)  \ar[r] & E \ar[r] & 0 
	}
	$$
where $\eta_{J^1E} = [\pi' x^{\xqb} + a_1'x^{\xqa\xqb}+a_2' x^{\xqa^2\xqb}]\in \Ext(E,N^1)$
Hence 
	\begin{align*}
	\eta_{{\Psi_1}_*(J^1E)} =& [\pi' x^{\xqb} + a_1'x^{\xqa\xqb}+a_2' x^{\xqa^2\xqb}] 
	\in \Ext(E,\hG) \nonumber \\
	\partial(\Psi_1) \equiv& [x^{\xqb} + a_1'x^{\xqa\xqb}+a_2' x^{\xqa^2\xqb}] \mb{ mod } \pi. 
	\end{align*}

(2): Now consider $\eta_{{\Psi_2}_*(J^2E)} \in \Ext(E,\hG)$ obtained as
	$$\xymatrix{
	0 \ar[r]& N^2 \ar[r] \ar[d]_-{\Psi_2} & J^2E \ar[r] \ar[d] & E \ar[r] 
	\ar@{=}[d] & 0 \\
	0 \ar[r] & \hG \ar[r] & f_*(J^2E)  \ar[r] & E \ar[r] & 0 
	}$$
Now we have
	$$
	\eta_{J^2E} =
	\left[\left(
	\begin{array}{c} 
		\pi' x^{\xqb} + a_1'x^{\xqa\xqb}+a_2' x^{\xqa^2\xqb} \\ 
		\Delta(\pi)x^{\xqb^2} + \Delta(a_1) x^{\xqa\xqb^2} + \Delta(a_2) x^{\xqa^2\xqb^2}
	\end{array}
	\right)\right]
	\in \Ext(E,N^2)
	$$
Let ${\mathcal{F}}(y)= (y')^\xqb+ \pi \Delta(y)$. Then
applying $\Psi_2 = \vartheta_1 \circ \mfrak{f}$ and $\mfrak{f}(z_1,z_2)= z_1^\xqb+ \pi z_2$, we have
	\begin{align*}
	\partial(\Psi_2)= \eta_{{\Psi_2}_*(J^2E)} 
		&= [\vartheta_1({\mathcal{F}}(\pi) x^{\xqb^2}+ {\mathcal{F}}(a_1)x^{\xqa\xqb^2}+ {\mathcal{F}}(a_2)x^{\xqa^2\xqb^2})] 
		\in \Ext(E,\hG) \nonumber \\
	\partial(\Psi_2) &\equiv [{\mathcal{F}}(\pi) x^{\xqb^2}+ {\mathcal{F}}(a_1)x^{\xqa\xqb^2}+ {\mathcal{F}}(a_2)x^{\xqa^2\xqb^2}] \bmod \pi\\
		&\equiv [(\pi')^\xqb x^{\xqb^2}+ (a'_1)^\xqb x^{\xqa\xqb^2}+ (a'_2)^\xqb x^{\xqa^2\xqb^2}] \bmod \pi \\
		&\equiv [x^{\xqb^2}+ (a'_1)^\xqb x^{\xqa\xqb^2}+ (a'_2)^\xqb x^{\xqa^2\xqb^2}] \bmod \pi. 
	\end{align*}

Recall (\cite{Ge3}, section 5) that the map $R\{\tau\}^{\hat{}}\to\Ext(E,\hG)$ given by $\eta\mapsto [\eta]$ 
is surjective and the kernel consists of the inner derivations, which is to say all $\eta$ of the form 
	$$
	\pi \alpha- \alpha \circ \vp_E(t),
	$$
for some $\alpha\in R\{\tau\}^{\hat{}}$. 
Let us now work out these relations explicitly for $\alpha=\tau^0,\tau^1,\tau^2$.
If $\alpha = \tau^j$, with $j\geq 0$, we get the relation
	\begin{align*}
	\pi \tau^j &= \tau^j(\pi \tau^0+a_1\tau^1 + a_2\tau^2) \\
	\tau^{j+2} &= a_2^{-\xqa^j}[(\pi-\pi^{\xqa^j})\tau^j - a_1^{\xqa^j}\tau^{j+1} ] \\
	\tau^{j+2} &\equiv -(a_1a_2^{-1})^{\xqa^j} \tau^{j+1} \mb{ mod } \pi 
	\end{align*}
and hence we have by induction the relations
	\begin{equation}
	\tau^{i+1} \equiv (-1)^{i}w^\frac{\xqa^{i}-1}{\xqa-1} \tau^1 \bmod \pi \label{tauj}
	\end{equation}
where $w=a_1 a_2^{-1}$, for all $i\geq 0$.

Therefore writing $\xqb=\xqa^\rdeg$, we have	
	\begin{align*}
	\partial(\Psi_1)  &\equiv 	x^{\xqb} + a_1'x^{\xqa\xqb}+a_2' x^{\xqa^2\xqb} \\
	&\equiv 	x^{\xqa^\rdeg} + a_1'x^{\xqa^{\rdeg+1}}+a_2' x^{\xqa^{\rdeg+2}} \\
	&\equiv \tau^\rdeg + a'_1\tau^{\rdeg+1} + a'_2\tau^{\rdeg+2} \\
	&\equiv 
	(-1)^{\rdeg+1}w^{1+\cdots+\xqa^{\rdeg-2}}(1-a'_1w^{\xqa^{\rdeg-1}}+a'_2w^{\xqa^{\rdeg-1}+\xqa^{\rdeg}})\tau^1
	\end{align*}
and
	\begin{align*}
	\partial(\Psi_2) & \equiv	x^{\xqb^2}+ (a'_1)^\xqb x^{\xqa\xqb^2}+ (a'_2)^\xqb x^{\xqa^2\xqb^2}\\
		& \equiv \tau^{2\rdeg} + (a'_1)^{\xqa^\rdeg}\tau^{2\rdeg+1} + (a'_2)^{\xqa^\rdeg} \tau^{2\rdeg+2} \\
		& \equiv (-1)^{2\rdeg+1}w^{1+\cdots+\xqa^{2\rdeg-2}}
			\big(1-(a'_1)^{\xqa^\rdeg} w^{\xqa^{2\rdeg-1}} 
			+ (a'_2)^{\xqa^\rdeg} w^{\xqa^{2\rdeg-1}+\xqa^{2\rdeg}}\big) \tau^1 \\
		& \equiv (-1)^{2\rdeg+1}w^{1+\cdots+\xqa^{2\rdeg-2}}
			\big(1-a'_1 w^{\xqa^{\rdeg-1}} + a'_2 w^{\xqa^{\rdeg-1}+\xqa^{\rdeg}}\big)^{\xqa^\rdeg} \tau^1.
	\end{align*}
and hence
	\begin{align*}
	\lam_1 &= \frac{\partial(\Psi_2)}{\partial(\Psi_1)} \equiv 
	 (-1)^\rdeg w^{\xqa^{\rdeg-1}+\cdots+\xqa^{2\rdeg-2}} \big(1-a'_1 w^{\xqa^{\rdeg-1}} + a'_2 
	 	w^{\xqa^{\rdeg-1}+\xqa^{\rdeg}}\big)^{\xqa^\rdeg-1} \mb{ mod } \pi \\
	&\equiv  (-1)^\rdeg w^{\xqa^{\rdeg-1}(1+\cdots+\xqa^{\rdeg-1})} \big(1-a'_1 w^{\xqa^{\rdeg-1}} + a'_2 
		 	w^{\xqa^{\rdeg-1}+\xqa^{\rdeg}}\big)^{\xqa^\rdeg-1} \mb{ mod } \pi	\\
	&\equiv  (-1)^\rdeg w^\frac{\xqa^{\rdeg-1}(\xqa^\rdeg-1)}{\xqa-1}\big(1-a'_1 w^{\xqa^{\rdeg-1}} + a'_2 
		 	w^{\xqa^{\rdeg-1}+\xqa^{\rdeg}}\big)^{\xqa^\rdeg-1} \mb{ mod } \pi	
	\end{align*}

Now we determine $\gamma$. Write $g=g_{\Theta_2}=\sum_i \alpha_i \tau^i$.
Then from proposition~\ref{diff}, we know $\gamma= \pi \alpha_0$. Now we will compute $\alpha_0$. Let 
$(z_0,z_1,z_2):= \vp_{J^2E}(t)(x,0,0)$. Then
	\begin{align}
	\Theta_2(\vp_{J^2E}(t)(x,0,0)) &= \Psi_2(z_1,z_2) - \lam_1 \Psi_1(z_1) + g(z_0)
	\nonumber \\
	&= \vartheta_1(z_1^\xqb+ \pi z_2) - \lam_1 \vartheta_1(z_1) + g(z_0) \nonumber \\
	&\equiv z_1^\xqb -\lam_1 z_1 + g(z_0) \mb{ mod } \pi \nonumber
	\end{align}
where $z_0 = \pi x + a_1x^\xqa + a_2x^{\xqa^2}$ and $z_1=\pi' x^{\xqb} + a_1'x^{\xqa\xqb}+a_2' x^{\xqa^2\xqb}$. 
On the other hand from the $A$-linearity of $\Theta_2$ we have
	\begin{equation*}
	\Theta_2(\vp_{J^2E}(t)(x,0,0)) = \vp_{\hG}(t)\Theta_2(x,0,0)= \pi \Theta_2(x,0,0) \equiv 0 \mb{ mod } \pi
	\end{equation*}
and hence $z_1^\xqb -\lam_1 z_1 + g(z_0) \equiv 0 \bmod \pi$.
Substituting $z_0$ and $z_1$ in, we obtain
\begin{align*}
	0 &\equiv (\pi' x^{\xqb} + a_1'x^{\xqa\xqb}+a_2' x^{\xqa^2\xqb})^\xqb -
		\lam_1( \pi' x^{\xqb} + a_1'x^{\xqa\xqb}+a_2' x^{\xqa^2\xqb}) + g(\pi x + a_1x^\xqa + a_2x^{\xqa^2}) \\
	&\equiv (x^{\xqb} + a_1'x^{\xqa\xqb}+a_2' x^{\xqa^2\xqb})^\xqb -\lam_1 (x^{\xqb} + a_1'x^{\xqa\xqb}+a_2' x^{\xqa^2\xqb}) + g(a_1x^\xqa + a_2x^{\xqa^2}) 	
\end{align*}
Now substitute $g(x)=\sum_{j\geq 0} \alpha_j x^{\xqa^j}$ into this and consider
the coefficient of $x^\xqa$. If $\xqb=\xqa$, we obtain $\lambda_{1}\equiv\alpha_0 a_1$ and hence
	$$
	\gamma = \pi \alpha_0 \equiv \pi\lambda_{1}/a_1 \bmod \pi^2.
	$$
If $\xqb\neq \xqa$, we obtain $\alpha_0a_1\equiv 0$ and hence $\gamma\equiv 0 
\bmod \pi^2$ if $a_1 \not\equiv 0 \mod \pi$. If $a_1 \equiv 0 \mod \pi$,
 we consider the 
coefficient of $x^{\xqa^2}$ which is $\alpha_0 a_2+ \lambda_1$ if $\rdeg =2$
and $\alpha_0a_2$ otherwise. In the case when $\rdeg =2$ we have $\alpha_0 
\equiv  \lambda_1/a_2 \mod \pi$ since $a_2$ is invertible and hence 
$\gamma \equiv -\pi \lambda_1/a_2 \mod \pi^2$. When $f \geq 3$ we have 
$\alpha_0 \equiv 0 \mod \pi$ and hence the result follows.
\end{proof}

\footnotesize{

}


\begin{thebibliography}{99}
\bibitem[Bo1]{bor11a}
J. Borger, 
The basic geometry of Witt vectors, I: The affine case, Algebra and Number 
Theory 5 (2011), no. 2, pp 231-285. 

\bibitem[Bo2]{Bo2}
J. Borger,
 The basic geometry of Witt vectors, II: Spaces, Mathematische 
Annalen 351 (2011), no. 4, pp 877-933. 

\bibitem[BoSa1]{bosa1}
J. Borger, A. Saha. Frobenius and Fibers of Arithmetic Jet Spaces.
arXiv:1703.07010

\bibitem[BoSa2]{bosa2}
J. Borger, A. Saha. Isocrystals associated to arithmetic jet spaces of abelian schemes.
arXiv:1712.09346


\bibitem[Bui1]{Bui1}
A. Buium,  Intersections in jet spaces and a conjecture of S.Lang, Annals of 
Math. 136 (1992), 583-593.

\bibitem[Bui2]{Bui2}
A. Buium,
Differential characters of abelian varieties over p-adic fields, Invent. Math.,
122, 2, (1995), 309-340.
\newblock

\bibitem[Bui3]{Bui3}
A. Buium, Differential Modular Forms, Crelle J., 520 (2000), 95-167.



\bibitem[BP1]{BP1}
A. Buium, B. Poonen, Independence of points on elliptic curves arising from
special points on modular and Shimura curves, I: global results, Duke Math. J.,
147, 1 (2009), 181-191.

\bibitem[BP2]{BP2}
A. Buium, B. Poonen, Independence of points on elliptic curves arising from
special points on modular and Shimura curves, II: local results, 
Compositio Math., 145 (2009), 566-602

\bibitem[BuSa1]{BuSa1}
A. Buium, A. Saha,
The ring of differential Fourier expansions,  
J. Number Theory 132 (2012), no. 5, 896–937.

\bibitem[BuSa2]{BuSa2}
A. Buium, A. Saha,
Hecke operators on differential modular forms mod p.,
J. Number Theory 132 (2012), no. 5, 966–997. 

\bibitem[BuSa3]{BuSa3}
A. Buium, A. Saha,
Differential overconvergence,  Algebraic methods in dynamical systems, 99–129, 
Banach Center Publ., 94, Polish Acad. Sci. Inst. Math., Warsaw, 2011.

\bibitem[BuSa4]{BuSa4}
A. Buium, A. Saha,
The first p-jet space of an elliptic curve: global functions and lifts of 
Frobenius, 
Math. Res. Lett. 21 (2014), no. 4, 677–689. 




\bibitem[D74]{drin74}
V.G. Drinfeld, Elliptic modules. Mat. Sb. (N.S.) 94(136) (1974), 594–627, 656.

\bibitem[D76]{drin76}
V.G. Drinfeld, Coverings of $p$-adic symmetric domains, {\it Funkcional. Anal. i Prilozen}, 
10(2):29-40, 1976.

\bibitem[Du]{du}
T. Dupuy, 
Deligne-Illusie Classes I: Lifted Torsors of Lifts of the Frobenius for Curves,
arXiv:1403.2025

\bibitem[Ge1]{Ge1}
E.-U. Gekeler,
\newblock de Rham cohomology for Drinfeld modules. S\'eminaire de Th\'eorie des 
Nombres, Paris 1988–1989, 57–85, Progr. Math. 91, Birkh\"auser Boston, Basel, 
Berlin, 1990.
\newblock

\bibitem[Ge2]{Ge2}
E.-U. Gekeler, On the de Rham isomorphism for Drinfeld modules. 
J. Reine Angew. Math.  401 (1989), 188–-208.

\bibitem[Ge3]{Ge3}
E.-U. Gekeler, de Rham cohomology and the Gauss-Manin connection for Drinfel'd modules. \emph{$p$-adic analysis} (Trento, 1989), 223–255, Lecture Notes in Math., 1454, Springer, Berlin, 1990
 
\bibitem[Goss]{Goss}
D. Goss, \emph{Basic structures of function field arithmetic}. Ergebnisse der Mathematik und ihrer Grenzgebiete (3), 
35. Springer-Verlag, Berlin, 1996. xiv+422 pp.
 

%



\bibitem[Ha]{Hartl}
U.\ Hartl,
Isogenies of abelian Anderson A-modules and A-motives.
arXiv:1706.06807.

\bibitem[H05]{hessl05}
L. Hesselholt,
The big de Rham-Witt complex.
Acta Math. 214 (2015), no. 1, 135–207.

\bibitem[Jo]{Jo}
A. Joyal, $\d$-anneaux et vecteurs de Witt, C.R. Math. Rep. Acad. Sci. 
Canada 7 (1985) 177-182.

\bibitem[Lau]{Laumon-book-vol1}
G. Laumon, \emph{Cohomology of {D}rinfeld modular varieties. {P}art {I}}.
Cambridge University Press, Cambridge (1996)

\bibitem[M]{M}
J.I. Manin, Rational points of algebraic curves over function fields, 
Izvestija Akad. Nauk SSR 27 (1963), 1395-1440 (in Russian).


\bibitem[W]{Witt:Vectors}
E. Witt, 
Zyklische {K}\"orper und {A}lgebren der {C}harakteristik $p$ vom {G}rad $p^n$. {S}truktur diskret bewerteter perfekter {K}\"orper mit vollkommenem {R}estklassen-k\"orper der Charakteristik $p$.
J. Reine Angew. Math., 
176, 
1937
\end{thebibliography}
\end{document}